\def\thm@space@setup{%
\thm@preskip=2ex \thm@postskip=2ex
}
\newtheorem{thm}{Theorem~}[section] 
\newtheorem{lem}[thm]{Lemma~}
\newtheorem{prop}[thm]{Proposition~}
\newtheorem{cor}[thm]{Corollary~}
\theoremstyle{remark}
\newtheorem{rmk}[thm]{Remark~}
\newtheorem{examp}[thm]{Example~}
\theoremstyle{definition}
\newtheorem{defn}[thm]{Definition~}
\newcommand{\CC}{\mathbb{C}}
\newcommand{\ZZ}{\mathbb{Z}}
\newcommand{\RR}{\mathbb{R}}
\newcommand{\PP}{\mathbb{P}}
\newcommand{\QQ}{\mathbb{Q}}
\newcommand{\DD}{\mathbb{D}}
\newcommand{\BB}{\mathbb{B}}
\newcommand{\AAA}{\mathbb{A}}
\newcommand{\Prd}{\mathscr{P}}
\newcommand{\SF}{\mathscr{F}}
\newcommand{\La}{\mathscr{L}}
\newcommand{\calC}{\mathcal{C}}
\newcommand{\calE}{\mathcal{E}}
\newcommand{\calV}{\mathcal{V}}
\newcommand{\calO}{\mathcal{O}}
\newcommand{\calF}{\mathcal{F}}
\newcommand{\calT}{\mathcal{T}}
\newcommand{\calU}{\mathcal{U}}
\newcommand\Aut{\mathrm{Aut}}
\newcommand\Gal{\mathrm{Gal}}
\newcommand\Proj{\mathrm{Proj}}
\newcommand\Sym{\mathrm{Sym}}
\newcommand\SL{\mathrm{SL}}
\newcommand\id{\mathrm{id}}
\newcommand{\rank}{\mathrm{rank}}
\newcommand{\Pic}{\mathrm{Pic}}
\newcommand{\diag}{\mathrm{diag}}
\newcommand{\GL}{\mathrm{GL}}
\newcommand{\Bl}{\mathrm{Bl}}
\newcommand{\Ree}{\mathrm{Re}}
\newcommand{\MW}{\mathrm{MW}}
\newcommand{\disc}{\mathrm{disc}}
\newcommand{\FS}{\mathfrak{S}}
\newcommand{\bs}{\backslash}
\newcommand{\dbs}{\bs\!\! \bs}
\title{The complex ball-quotient structure of the moduli space of certain sextic curves}
\author[Z. Zheng]{Zhiwei Zheng}
\address{Yanqi Lake Beijing Institute of Mathematical Sciences and Applications, Beijing, China}
\email{zhengzw@bimsa.cn}
\author[Y. Zhong]{Yiming Zhong}
\address{Tsinghua University, Beijing, China}
\email{zhongym16@mails.tsinghua.edu.cn}
\date{}
\begin{document}
	
\begin{abstract}
We study moduli spaces of certain sextic curves with a singularity of multiplicity $3$ from both perspectives of Deligne-Mostow theory and periods of K3 surfaces. In both ways we can describe the moduli spaces via arithmetic quotients of complex hyperbolic balls. We show in Theorem \ref{theorem: main} that the two ball-quotient constructions can be unified in a geometric way. 
\end{abstract}
	
\maketitle

\section{Introduction}
When study moduli spaces of certain complex curves or surfaces, we are naturally led to the moduli spaces of weighted points on $\PP^1$. Here are several such examples:

\begin{examp}
\label{example: 1}
Kond\=o \cite{kondo2000moduli} considered the moduli space of non-hyperelliptic curves of genus $4$. A generic non-hyperelliptic curve of genus $4$ is the zero locus of a section of $\calO(3)\boxtimes \calO(3)$ on $\PP^1\times \PP^1$. The projection of the curve to $\PP^1$ (there are two such projections) is a triple cover with $12$ branch points. 
We also know that an elliptic fibration on a rational surface has $12$ branch points. See \cite{heckman2002moduli} for work on such rational surfaces.
Via these constructions, the moduli spaces of non-hyperelliptic curves of genus $4$, of $12$ points with the same weight, and of rational elliptic surfaces are closely related to each other.
\end{examp}

\begin{examp}
\label{example: 2}
Allcock, Carlson and Toledo \cite{allcock2002complex} realized the moduli space of cubic surfaces as an arithmetic ball quotient of dimension $4$ via the intermediate Jacobian of certain associated cubic threefolds. After that, Dolgachev, van Geemen and Kond\={o} \cite{dolgachev2005complex} studied the moduli space of cubic surfaces from a different point of view. They started with a line $l$ lying on a smooth cubic surface $S$, and looked at the planes containing $l$. There are five such planes intersecting with $S$ in $3$ lines, and two such planes intersecting with $S$ in $l$ and a quadric curve tangent to $l$. In this way we obtain $5+2$ points on the pencil of planes containing $l$. They then associated a plane sextic curve with an automorphism of order $3$ to each set of $2+5$ points. Via these constructions, the moduli spaces of cubic surfaces, of  $2+5$ points on $\PP^1$ with certain weights, and of the corresponding sextic curves are closely related to each other.
\end{examp}

\begin{examp}
\label{example: 3}
Kond\={o} \cite{kondo2007moduli5} considered del Pezzo surfaces of degree $4$.
The quadrics in $\PP^4$ containing a fixed del Pezzo surface of degree $4$ form a pencil with $5$ branch points.
Each set of $5$ points gives a plane sextic curve with an automorphism of order $5$.
Via these constructions, the moduli spaces of del Pezzo surfaces of degree $4$, of $5$ points on $\PP^1$ with same weight, and of the corresponding sextic curves are closely related to each other.
\end{examp}

Moreover, in all these examples, there are K3 surfaces associated with sets of points on $\PP^1$ in a natural way.
Each of these K3 surfaces contains a pencil of curves of genus one or two, together with an action of cyclic group preserving all members of the pencil. 
Moreover, the set of points on $\PP^1$ is exactly the set of singular members in the pencil.
The moduli spaces of the sets of branch points are all ball quotients and can be constructed by periods of the corresponding $K3$ surfaces.

There is another natural way to deal with moduli spaces of points on $\PP^1$, namely, the Deligne-Mostow theory (\cite{deligne1986monodromy}, \cite{mostow1986generalized,mostow1988discontinuous}).
%These ball quotients all appear in Deligne-Mostow's works 
We observe that some cases in Deligne-Mostow's list naturally lead to singular sextic curves, as we have seen in Example \ref{example: 2} and \ref{example: 3}.
One of the most interesting new case is the case $(\frac{1}{3}, \frac{1}{3}, \frac{1}{3},\frac{1}{6},\frac{1}{6}, \frac{1}{6}, \frac{1}{6}, \frac{1}{6},\frac{1}{6})$ in Deligne-Mostow’s list, which gives rise to a ball quotient of dimension $6$.
In this paper we study the moduli of $9$ points on $\PP^1$ with this weight.

Denote by $m: [9]\to \PP^1$ an injective map from $[9]\coloneqq \{1,2,\cdots,9\}$ to $\PP^1$. 
On one hand, from Deligne-Mostow's work, one can associate with $m$ a cyclic cover of $\PP^1$ branched along $m([9])$. We call the cover a Deligne-Mostow curve.
The periods of this curve give rise to a ball-quotient structure on the moduli space (see \S \ref{subsection: Review of Deligne-Mostow's Theory}). 
On the other hand, we can attach a K3 surface with a $D_4$ singularity to each $m$.
The periods of the resolutions of those K3 surfaces give rise to another arithmetic  ball quotient (see \S \ref{subsection: period map Prd_W}). 
%A natural question is: \emph{can we unify those two ball-quotient constructions in a geometric way? }
Our main goal of this paper is to formulate and prove such a geometric unification of those two constructions (see Theorem \ref{theorem: main result} and Diagram (\ref{main diagram})). 

\begin{rmk}
Dolgachev and Kond\={o} \cite{dolgachev2007moduli} summarized some works on the complex ball uniformizations of the moduli spaces of del Pezzo surfaces, $K3$ surfaces and algebraic curves of low genus, and conjectured that all the ball quotients in Deligne-Mostow theory are moduli spaces of certain K3 surfaces.
Moonen \cite{moonen2018deligne-mostow} proved that a large number of ball quotients in Deligne-Mostow's works are moduli of K3 surfaces.
\end{rmk}

The corresponding sextic curves in our case are defined by polynomials of the form $F\coloneqq X_0^3 F_3(X_1,X_2)+F_6(X_1,X_2)$, where $F_i$ are degree $i$ homogeneous polynomials in $X_1,X_2$.
The double cover of $\PP^2$ branched along $Z(F)$ is a singular K3 surface with a $D_4$-singularity and a natural action of $\mu_6$ (the group of $6$-th roots of unity).
Its minimal resolution is a smooth K3 surface $W_F$ with an elliptic fibration over $\PP^1$. 
The quotient of $W_F$ by the involution is a rational surface, which can be obtained from $\PP^2$ by consecutive blowups. See Proposition \ref{proposition: alternative way to get the smooth K3} for more details. A generic such K3 surface $W_F$ has Picard lattice isomorphic to $U\oplus A_2(-1)^3$ and transcendental lattice isomorphic to $A_2\oplus E_6(-1)^2$. 
We will describe the $\mu_6$-actions on $\Pic(W_F)$ and $T(W_F)$ in an explicit way, see \S \ref{section: abstract description}. The idea of the proof is inspired by Kond\=o \cite{kondo2000moduli} and Dolgachev-van Geemen-Kond\={o} \cite{dolgachev2005complex}.
We give a geometric description for the fibration structure on $W_F$. 
There are $9$ singular fibers, $3$ of which are of Kodaira type \uppercase\expandafter{\romannumeral4} and the other $6$ are of Kodaira type \uppercase\expandafter{\romannumeral2}.  
%Here we use notation in \cite{kodaira1960compact,kodaira1963compact} for the type of singular fibers.
In \S \ref{subsection: trivialization} we show that this fibration is an isotrivial family, and the pullback of this family to Deligne-Mostow curve is birationally a product of two curves.
This geometric construction leads to an identification between two complex hyperbolic balls, one is constructed from the Deligne-Mostow theory and the other one from periods of $K3$ surfaces (see the end of \S \ref{section: hodge structure}). Such an identification is needed in the formulation of our main Theorem \ref{theorem: main}.

Consider the periods of those $K3$ surfaces, we define a period map 
\begin{equation*}
\Prd_S\colon \calF_S\to \Gamma_S\backslash \BB_S,
\end{equation*} 
where $\calF_S$ is the moduli space of the singular sextic curves $Z(F)$, and $\Gamma_S\backslash \BB_S$ is an arithmetic ball quotient of dimension $6$, see \S \ref{subsection: period map Prd_W}. We will prove $\Prd_S$ is an open embedding. Our proof is standard, relying on the global Torelli theorem and some lattice-theoretic analysis. 
On the other hand, from Deligne-Mostow theory, we have another period map which is also an open embedding:
\begin{equation*}
\Prd_{DM}\colon \calF_{DM}\to \Gamma_{DM}\backslash \BB_{DM}.
\end{equation*}
Here, $\calF_{DM}$ is the moduli space of $m\colon [9]\to \PP^1$, and $\Gamma_{DM}\backslash \BB_{DM}$ is an arithmetic ball quotient of dimension $6$. Our main Theorem \ref{theorem: main} relates the above two period maps in a natural way.

\noindent\textbf{Structure of the paper}: In \S \ref{section: Sextics and K3 Surfaces}, we describe the elliptic fibration on the $K3$ surface $W_F$ associated with $F$. 
In \S \ref{section: gitdm}, we establish the relation between the Deligne-Mostow moduli space $\calF_{DM}$ of $m\colon [9]\to \PP^1$ and the moduli space $\calF_S$ of the singular sextic curves $Z(F)$. 
In \S \ref{section: trivialization}, we show (with explicit calculations) that the $K3$ surfaces $W_F$ are birational to quotients of products of two curves. %after the base-change via the Deligne-Mostow cover, the elliptic fibrations become birationally trivially. Moreover, we characterize explicitly certain birational models of the $K3$ surfaces. 
In \S \ref{section: period map by K3} we define the period maps for the moduli space $\calF_S$ via periods of $W_F$, and show its injectivity.
In \S \ref{section: hodge structure}, we study the relation between the weight-two Hodge structures on $W_F$ and the weight-one Hodge structures on the Deligne-Mostow curves using results of \S \ref{section: trivialization} and the Chevalley-Weil formula. In particular, we obtain the idenfication between the two balls $\BB_{DM}$ and $\BB_S$.
In \S \ref{section: commu} we summarize our results in a commutative diagram \eqref{main diagram} and prove our main Theorem \ref{theorem: main}.
%and discuss the relation between GIT-compactifications and Baily-Borel compactifications.
Finally in \S \ref{section: abstract description} we give an explicit description of the $\mu_3$-action on the transcendental lattice for a generic $W_F$.

\textbf{Acknowledgement}: We thank Eduard Looijenga for stimulating discussion and many helpful comments, and Dali Shen, Chenglong Yu for related discussion, especially on Deligne-Mostow theory. The first author thanks Max Planck Institute for Mathematics for its support and excellent research atmosphere during his stay.

\noindent\textbf{Notation and Conventions:} \\
\noindent 1. $\zeta_n=\exp(\frac{2\pi\sqrt{-1}}{n})$\\
\noindent 2. $\mu_n=\langle \zeta_n\rangle$: the group of $n$-th roots of unity\\
\noindent 3. $\PP V=(V-0)/\CC^{\times}$: the projectivization of a complex vector space $V$\\
\noindent 4. $\CC[X_0, \cdots, X_n]$ and $\CC[X_0, \cdots, X_n]_d$: the ring of polynomials and the space of polynomials of degree $d$\\
\noindent 5. $\Sym^d V$: $d$-th symmetric product of a vector space $V$\\
\noindent 6. $Z(F)$: the zero locus in $\PP V$ of a homogeneous polynomial $F$\\
\noindent 7. $\BB(T)$: the complex hyperbolic ball associated with a unitary Hermitian form $(T, h)$ \\
\noindent 8. $A_L$: the discriminant group of a lattice $L$\\
\noindent 9. For a module $V$ over a ring $R$ and an extension $R\hookrightarrow R'$, we write $V_{R'}=V\otimes_R R'$.\\
\noindent 10. $L_{K3}$: the K3 lattice.

\section{Singular Sextics and $K3$ Surfaces}
\label{section: Sextics and K3 Surfaces}
In this section we first associate $K3$ surfaces with certain singular plane sextic curves, and then study some natural isotrivial fibration structures on the $K3$ surfaces. Polynomials and varieties are defined over the complex field $\CC$.

Let $V$ be a complex vector space of dimension $3$, and let $\PP V$ be the associated projective space of dimension $2$. Let $F\in \Sym^6(V^{*})$ be a sextic polynomial on $V$.  
%Let $F(X_0, X_1, X_2)$ be a sextic polynomial in three variables $X_0, X_1, X_2$. 
We denote by $Z(F)$ the sextic curve defined by $F$ in $\PP V$. 
Let $S_F$ be the double cover of $\PP V$ branched along $Z(F)$. 
It is well-known that, when $Z(F)$ is a smooth sextic curve, the surface $S_F$ is a smooth $K3$ surface. 
Suppose $Z(F)$ is a singular sextic curve with only ADE singularities, then $S_F$ is a $K3$ surface with ADE singularities. 
In such situation we denote by $W'_F$ the minimal model of $S_F$, which is a smooth $K3$ surface.

We consider an action of $\mu_3$ on $V$ such that the generator $\zeta_3\in \mu_3$ has eigenvalues  $\zeta_3$ (with multiplicity $1$) and $1$ (with multiplicity $2$). Let $V=V_1\oplus V_2$ be the corresponding decomposition into eigenspaces with $\dim V_1=1$ and $\dim V_2=2$. We denote by $p\coloneqq \PP V_1$ the point in $\PP V$ defined by $V_1$. The projective line $P:=\PP V_2$ can be regarded as the pencil of lines passing through $p$. The $\mu_3$-action on $V$ induces one on $\PP V$.
We consider sextic polynomials $F$ which are invariant under this action. 
Then $p$ is a point of multiplicity $3$ on $Z(F)$. 
We take coordinates $X_0, X_1, X_2$ for $V$ such that $V_1=\{X_1=X_2=0\}$ and $V_2=\{X_0=0\}$. Now we can write $F=F(X_0, X_1, X_2)=X_0^3 F_3(X_1, X_2)+F_6(X_1, X_2)$, where we usually denote by $F_i=F_i(X_1,X_2)$ a homogeneous polynomial in variables $X_1,X_2$ of degree $i$. 
We denote by $\calV$ the space of such sextic polynomials $F$, and let $\calV^{\circ}$ be the subspace of $\calV$ consisting of $F$ such that $F_3 F_6$ has no multiple roots. If not particularly mentioned, we always take $F\in \calV^{\circ}$. 

\begin{lem}
\label{lemma: only singularity}
The point $p$ is the only singularity of $Z(F)$ for $F\in \calV^{\circ}$. 
\end{lem}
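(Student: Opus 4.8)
The plan is to apply the Jacobian criterion directly to the polynomial $F = X_0^3 F_3(X_1,X_2) + F_6(X_1,X_2)$. A point $q = [X_0:X_1:X_2]\in Z(F)$ is a singular point of $Z(F)$ if and only if all three partial derivatives vanish at $q$, and here
\[
\frac{\partial F}{\partial X_0} = 3X_0^2\, F_3(X_1,X_2), \qquad \frac{\partial F}{\partial X_i} = X_0^3\,\frac{\partial F_3}{\partial X_i} + \frac{\partial F_6}{\partial X_i}\quad (i=1,2).
\]
Since $F_3$, $F_6$ and all their first partials are homogeneous of positive degree, they all vanish at $[X_1:X_2]=[0:0]$, so $p=[1:0:0]$ lies on $Z(F)$ and is a singular point. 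It remains to show that no other point of $Z(F)$ is singular, which I would do by separating the cases $X_0\neq 0$ and $X_0=0$ and using in each case the hypothesis that the binary form $F_3F_6$ of degree $9$ has no multiple root; note this forces $F_3$ and $F_6$ both to be nonzero, each squarefree, and coprime to one another.

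In the case $X_0\neq 0$, the vanishing of $\partial F/\partial X_0$ forces $F_3(X_1,X_2)=0$. If $(X_1,X_2)\neq(0,0)$, then $[X_1:X_2]$ is a root of $F_3$, and the relation $0=F(q)=X_0^3F_3(X_1,X_2)+F_6(X_1,X_2)$ then gives $F_6(X_1,X_2)=0$ as well, so $[X_1:X_2]$ is a common root of $F_3$ and $F_6$, contradicting the coprimality established above. Hence $(X_1,X_2)=(0,0)$ and $q=p$.

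In the case $X_0=0$, we have $\partial F/\partial X_i = \partial F_6/\partial X_i$ for $i=1,2$, so a singular point $[0:X_1:X_2]$ gives $[X_1:X_2]$ at which both partials of $F_6$ vanish; by Euler's identity $6F_6 = X_1\,\partial_{X_1}F_6 + X_2\,\partial_{X_2}F_6$ this means $F_6$ itself also vanishes there, i.e. $[X_1:X_2]$ is a multiple root of the binary sextic $F_6$, again contradicting the assumption on $F_3F_6$. This exhausts all possibilities, so $p$ is the only singularity of $Z(F)$. The argument is entirely elementary; the only points requiring a little attention are the bookkeeping of what ``$F_3F_6$ has no multiple root'' yields about $F_3$ and $F_6$ individually, and the use of Euler's relation in the second case, and I do not expect any substantial obstacle.
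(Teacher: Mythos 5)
Your proof is correct and uses essentially the same ingredients as the paper's: the Jacobian criterion, Euler's relation for homogeneous forms, and the fact that ``$F_3F_6$ squarefree'' forces $F_3$, $F_6$ to be individually squarefree and coprime. The only cosmetic difference is the case split (you branch on $X_0\neq 0$ vs.\ $X_0=0$, while the paper first extracts $F_6(X_1,X_2)=0$ from $X_1\cdot\eqref{eq:1B}+X_2\cdot\eqref{eq:1C}$ together with \eqref{eq:1A} and then branches), but the logical content is the same.
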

\begin{proof}
	Let $[X_0:X_1:X_2]$ be a singularity of $Z(F)$, namely
    \begin{subequations}  \label{eq:1}
    	\begin{align}  
    	3 X_0^2 F_3(X_1,X_2)  &= 0,  \label{eq:1A} \\
    	X_0^3 \frac{\partial F_3}{\partial X_1} + \frac{\partial F_6}{\partial X_1} &= 0,  \label{eq:1B} \\
    	X_0^3 \frac{\partial F_3}{\partial X_2} + \frac{\partial F_6}{\partial X_2} &= 0.  \label{eq:1C}
    	\end{align}
    \end{subequations}
    From $X_1\cdot$\eqref{eq:1B}$+X_2\cdot$\eqref{eq:1C} we obtain 
    \begin{equation}\label{eq:2}
    	3X_0^3 F_3(X_1,X_2)+6 F_6(X_1,X_2)=0.
    \end{equation}
    By \eqref{eq:1A} and \eqref{eq:2}, we obtain $F_6(X_1,X_2)=0$.
    Hence we have $X_1=X_2=0$ or $F_3(X_1,X_2)\ne 0$.
    If $X_1=X_2=0$, then the singularity is the point $p$.
    If $F_3(X_1,X_2)\ne0$, then by $\eqref{eq:1A}$ we obtain $X_0=0$.
    By \eqref{eq:1B} and \eqref{eq:1C}, we obtain $\frac{\partial F_6}{\partial X_1}=\frac{\partial F_6}{\partial X_2}=0$.
    Since $F_6$ does not have multiple roots, we obtain a contradiction.
    Hence $p$ is the only singularity of $Z(F)$ for $F\in \calV^{\circ}$.
\end{proof}

We give an alternative construction of $W'_F$ as follows.
The sextic curve $Z(F)$ is singular at $p$ with multiplicity three. Thus the preimage of $p$ in $S_F$ is a $D_4$-singularity (this is implied from \cite[Theorem 2.23]{greuel2007introduction}).
We first blow up the point $p$ on $\PP V$ and denote the exceptional divisor by $E_p$. 
The strict transform of $Z(F)$, denoted by $K$, intersects with $E_p$ at 3 points, say $q_1,q_2, q_3$. %Note that $K}$ and $E_p$ are smooth curves on $\Bl_{p}\mathbb{P}^2$. 
We then blow up $q_1,q_2, q_3$ on $\Bl_{p} (\PP V)$ and denote by $E_1, E_2, E_3$ the corresponding exceptional divisors.
Let $\widehat{K}, \widehat{E}_p\subset R\coloneqq \Bl_{\{q_1, q_2, q_3\}} (\Bl_p(\PP V))$ be the strict transforms of $K$ and $E_p$. 
Let $W_F$ be the double cover of $R$ branched along $\widehat{K}$ and $\widehat{E}_p$. 
Since $\widehat{K}\cap\widehat{E}_p=\emptyset$, the surface $W_F$ is smooth. 
\begin{prop}
	\label{proposition: alternative way to get the smooth K3}
	The evident birational map between $W'_F$ and $W_F$ extends to an isomorphism.
\end{prop}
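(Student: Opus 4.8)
The plan is to show that both $W'_F$ and $W_F$ are smooth minimal models of the singular $K3$ surface $S_F$, and then invoke uniqueness of minimal models for surfaces of Kodaira dimension zero. First I would make precise the "evident birational map": the surface $R$ is obtained from $\PP V$ by a sequence of blowups, so there is a birational morphism $R\to \PP V$, and pulling back the double cover $S_F\to \PP V$ along this morphism (i.e. taking the normalization of the fiber product, or equivalently the double cover of $R$ branched along the appropriate strict transform of $Z(F)$) produces a surface birational to $S_F$, hence birational to its minimal model $W'_F$. One checks that the correct branch divisor upstairs is exactly $\widehat K + \widehat E_p$: the branch locus transforms under blowup of a point on (respectively off) the branch curve by picking up (respectively not picking up) the exceptional curve with appropriate multiplicity, and since $p\in Z(F)$ has even multiplicity-adjusted behaviour while $q_1,q_2,q_3$ lie on the strict transform $K$, a direct local computation identifies the branch divisor of the resulting smooth double cover as $\widehat K\cup\widehat E_p$. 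This gives a birational map $W_F\dashrightarrow W'_F$.

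Next I would verify that $W_F$ is itself a smooth minimal $K3$ surface. Smoothness is already noted in the text, since $\widehat K\cap \widehat E_p=\emptyset$ makes the branch divisor smooth. To see it is $K3$ (equivalently minimal with trivial canonical bundle), I would compute $K_{W_F}$ by the standard double cover formula $K_{W_F}=\pi^*(K_R + \tfrac12 B)$ where $B=\widehat K+\widehat E_p$ is the branch divisor and $\pi\colon W_F\to R$ the covering map; a bookkeeping of how $K_R$ and $B$ change under the three blowups shows $K_R+\tfrac12 B$ is trivial, so $K_{W_F}=0$. Since $W_F$ has Kodaira dimension zero, trivial canonical class, and (being birational to the $K3$ surface $S_F$) is simply connected with the right Betti numbers, it is a $K3$ surface; in particular it contains no $(-1)$-curves and is its own minimal model.

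Finally, both $W_F$ and $W'_F$ are smooth minimal surfaces lying in the same birational class; by the uniqueness of the minimal model in Kodaira dimension zero (a surface with $\kappa\ge 0$ has a unique smooth minimal model), the birational map between them is an isomorphism. I expect the main obstacle to be the local analysis identifying the branch divisor of the smooth double cover $W_F\to R$ correctly, that is, checking at the points $q_i$ and along $E_p$ that after the three blowups the branch locus is precisely $\widehat K+\widehat E_p$ rather than something differing by a component or a multiplicity; this is where the $D_4$-configuration (the three points $q_1,q_2,q_3$ on $E_p$, each lying on $K$) enters, and one must make sure the resolution of the $D_4$ singularity of $S_F$ is matched exactly by this particular sequence of blowups downstairs. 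Once the branch divisor is pinned down, the rest is the formal invocation of minimal model uniqueness.
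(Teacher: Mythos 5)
Your proposal follows essentially the same approach as the paper: both compute the canonical class of $R$ through the two blowups, use the double-cover formula to conclude $K_{W_F}=0$ (with smoothness coming from $\widehat K\cap\widehat E_p=\emptyset$ plus Lemma \ref{lemma: only singularity}), and then invoke uniqueness of the smooth minimal model to identify $W_F$ with $W'_F$. Your worry about pinning down the branch divisor is moot here, since $W_F$ is \emph{defined} as the double cover of $R$ branched along $\widehat K\sqcup\widehat E_p$; the birationality of $W_F$ with $S_F$ (hence with $W'_F$) is immediate from compatibility of the two double covers over the locus where $R\to\PP V$ is an isomorphism.
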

\begin{proof}
We denote by $Q=\Bl_p(\PP V)$, and denote by $g\colon R\rightarrow Q$ and $f\colon Q\to \PP V$ the two blowups. We first compute the canonical class of $R$. Let $L$ be a generic line in $\PP V$. Then the canonical class of $Q$ is given by 
\begin{equation*}
K_Q=f^*[-3L]+[E_p].
\end{equation*}
Let $K_R$ be the canonical class of $R$. Then
\begin{equation*}
K_R=g^*K_Q+[E_1]+[E_2]+[E_3]=g^*f^*[-3L]+g^*[E_p]+[E_1]+[E_2]+[E_3]
\end{equation*}
After rearrangement, we have
\begin{equation}
\label{equ: KR}
K_R=-3g^*f^*[L]+[\widehat{E}_p]+2([E_1]+[E_2]+[E_3])
\end{equation}	
We next compute the class of $\widehat{K}+\widehat{E_p}$ in $R$. Note that $Z(F)$ passes through $p$ with multiplicity $3$. The class $g^*f^*[Z(F)]$ is equal to
	\begin{align*}
	g^*([K]+3[E_p])
	&=[\widehat{K}]+[E_1]+[E_2]+[E_3]+3[\widehat{E}_p]+3([E_1]+[E_2]+[E_3])\\
	&=[\widehat{K}]+3[\widehat{E}_p]+4([E_1]+[E_2]+[E_3]),
	\end{align*}
This implies that
\begin{equation}
\label{equation: pullback of Z(F) and E_p}
[\widehat{K}]+[\widehat{E}_p]=6g^*f^*[L]-2[\widehat{E}_p]-4([E_1]+[E_2]+[E_3]).
\end{equation}
	
Since $W_F$ is the double cover of $R$ branched along $\widehat{K}\sqcup \widehat{E}_p$, the canonical class $K_{W_F}$ of $W_F$ is equal to $2K_R+[\widehat{K}]+[\widehat{E}_p]$. From Equations \eqref{equ: KR} and \eqref{equation: pullback of Z(F) and E_p}, we conclude that $K_{W_F}=0$.
By Lemma \ref{lemma: only singularity}, $W_F$ has no other singularities, thus $W_F$ is a smooth $K3$ surface. 

By the construction, we know that $W_F, W'_F$ are smooth resolutions of $S_F$, hence naturally isomorphic.
\end{proof}

We have a rational map $\pi_F\colon W_F\dashrightarrow P$ which is the composition of the double cover $W_F\longrightarrow \PP V$ and the rational morphism 
$\PP V\dashrightarrow P$, $[X_0:X_1:X_2]\longmapsto [X_1:X_2]$. 
The preimage of $p$ in $S_F$ is the only singularity, which is blown up in the resolution $W_F\to S_F$. %is the singular point of $Z(F)$, we have already blown up its preimage (the singularity of $S_F$) to resolve the singularity of $S_F$.
Hence the rational map $\pi_F$ is automatically a morphism. We describe $\pi_F$ explicitly in the following proposition.
\begin{prop}
	\label{proposition: fibration}
	The morphism $\pi_F\colon W_F\longrightarrow P$ is an elliptic fibration such that every smooth fiber is an elliptic curve with j-invariant $0$. In particular, $\pi_F$ is an isotrivial fibration.
\end{prop}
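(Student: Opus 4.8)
The plan is to exhibit the generic fiber of $\pi_F$ explicitly as a plane cubic curve and read off its $j$-invariant, then check separately that the finitely many degenerate fibers are still irreducible elliptic (or rather, that $\pi_F$ has no multiple fibers and only the expected singular ones), so that $\pi_F$ is a genuine elliptic fibration. Concretely, fix coordinates $[X_1:X_2]$ on $P=\PP V_2$ and work over the affine chart $X_2\ne 0$, setting $t=X_1/X_2$. A point $[t:1]\in P$ determines the line $\ell_t$ through $p$ in $\PP V$ spanned by $p$ and $[0:t:1]$, and the fiber $\pi_F^{-1}([t:1])$ is (the proper transform in $W_F$ of) the double cover of $\ell_t\cong \PP^1$ branched along the two points $\ell_t\cap Z(F)$ away from $p$, together with whatever comes from resolving the $D_4$ point. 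Substituting $X_1=tX_2$ into $F=X_0^3F_3(X_1,X_2)+F_6(X_1,X_2)$ gives, up to the scalars $F_3(t,1)$ and $F_6(t,1)$, an equation of the shape $X_0^3 + c(t) X_2^3 = 0$ on the line after clearing the common cube $X_2^3$ — more precisely the double cover of $\ell_t$ branched at $p$ (with multiplicity coming from the $D_4$ resolution) and the two residual points is birational to the curve $y^2 = X_0^3 F_3(t,1) + X_2^3 F_6(t,1)$ in weighted coordinates. That is a curve of the form $y^2 = a x^3 + b$ (a binary sextic that is a cube plus a square), whose smooth model is an elliptic curve with $j=0$: it carries the order-$3$ automorphism $x\mapsto \zeta_3 x$, which is exactly (the descent of) the $\mu_3$-action on $V$. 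This already forces $j\equiv 0$ on every smooth fiber, hence isotriviality, once we know the generic fiber is a smooth genus-one curve.

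So the key steps, in order, are: (1) compute the fiber equation by the substitution above and identify it with $y^2 = ax^3+b$; (2) observe the inherited $\mu_3$-symmetry (a lift of the ambient $\mu_3$-action that, by construction, preserves each line $\ell_t$ through $p$ and hence acts fiberwise) to conclude $j$-invariant $0$ on smooth fibers; (3) check that for $F\in\calV^\circ$ the generic fiber is smooth of genus one — this follows from Lemma \ref{lemma: only singularity} (the only base point/singularity issues are concentrated at $p$, which has been resolved) and the hypothesis that $F_3F_6$ has no multiple root (so $ab\ne 0$ and $a,b$ do not vanish simultaneously at generic $t$); (4) verify $\pi_F$ is relatively minimal with section/no multiple fibers so that the word ``elliptic fibration'' is justified — here one uses that $W_F$ is a smooth K3 (Proposition \ref{proposition: alternative way to get the smooth K3}) and that a fibration in genus-one curves on a K3 with $\chi=24>0$ has a section after at most a base change that is unnecessary here, or more directly exhibits the section coming from the exceptional curve $\widehat{E}_p$ of the first blowup (which maps isomorphically to $P$).

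I expect the main obstacle to be step (4), and more generally the bookkeeping of what the $D_4$ resolution contributes to each fiber: a priori the proper transform of the branch curve meets the exceptional configuration, and one must confirm that after passing to $W_F$ the map to $P$ has connected genus-one fibers with no multiple components, i.e. that the three exceptional lines $E_1,E_2,E_3$ and $\widehat{E}_p$ sit inside fibers correctly (they will be components of the three type $\mathrm{IV}$ fibers over the roots of $F_3$, as the introduction already announces). A clean way to sidestep the delicate local analysis is to argue birationally: produce a Weierstrass model $y^2 = x^3 + G(t)$ with $G$ the appropriate polynomial in $t$ built from $F_3,F_6$, note it defines an elliptic surface with $j\equiv 0$, check its minimal smooth model is a K3 by computing the degree of $G$ (so that the Kodaira fiber types sum correctly to $\chi=24$), and then invoke Proposition \ref{proposition: alternative way to get the smooth K3} together with uniqueness of the smooth relatively minimal model to identify it with $W_F\to P$. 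This reduces everything to an elementary computation with $F_3$ and $F_6$, which I will not grind through here.
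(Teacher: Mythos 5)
Your proposal follows essentially the same route as the paper: compute the fiber of $\pi_F$ as the double cover of a line through $p$, put it in the Weierstrass form $y^2 = ax^3 + b$, observe that the inherited $\mu_3$-action forces $j\equiv 0$ on smooth fibers, and exhibit the section coming from $\widehat{E}_p$. One small slip worth fixing: a line through $p$ meets $Z(F)$ in \emph{three} residual points away from $p$ (degree $6$ minus local multiplicity $3$), not two — which is in fact consistent with your cubic $y^2=ax^3+b$ having three finite branch points plus the one over $p$.
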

\begin{proof}
The equation of the singular $K3$ surface $S_F$ in the weighted projective space is 
\begin{equation}
\label{equation: SF}
S^2=X^3_0 F_3(X_1,X_2)+F_6(X_1,X_2)
\end{equation}
where $(S, X_0, X_1, X_2)$ is the weighted homogeneous coordinate system for $\PP(3,1,1,1)$. 
We take a point $a=[a_1: a_2] \in P$. Let $L_a$ be the line in $\PP V$ defined by $a_2 X_1- a_1 X_2=0$. The fiber $\pi_F^{-1}(a)$ is a double cover of $L_a$ branched at $L_a\cap Z(F)$. 
For a generic choice of $a$, the branch locus $L_a\cap Z(F)$ contains four different points (with one point being $p$). 
Therefore, $\pi_F^{-1}(a)$ is a smooth elliptic curve. Therefore, $\pi_F$ is an elliptic fibration with a section $\widehat{E}_p$.

Recall that we have a $\mu_3$-action on $\PP V$ and $Z(F)$, hence on $W_F$. This induced action fixes the section $\widehat{E}_p$ and preserves every fiber of $\pi_F$.
Thus the elliptic curve $\pi_F^{-1}(a)$ has $j$-invariant $0$. 
This implies that $\pi_F$ is an isotrivial fibration.
\end{proof} 

Next we give an explicit calculation of the structure of $\pi_F^{-1}(a)$ for $a=[a_1: a_2]$. We assume $a_1\ne 0$. Then we work in the open subspace $\{X_0\ne 0, X_1\ne 0\} \subset \PP(3,1,1,1)$. Let $s\coloneqq \frac{S}{X_0^2 X_1}$, $x_1\coloneqq \frac{X_1}{X_0}$ and $t\coloneqq \frac{X_2}{X_1}$, then the affine equation of $S_F$ (defined by Equation \eqref{equation: SF}) can be written as
\begin{equation*}
s^2 = x_1 f_3(t) + x_1^4 f_6(t).
\end{equation*} 
Here $f_i(t)=F_i(1, t)$.
%\EL{I understand that $f_6\in \CC[t, t^{-1}]$ and need not lie in $\CC[t]$.}
Thus the preimage of $L_a$ in $S_F$ is given by the affine equation 
\begin{equation*}
s^2 = x_1 f_3(\frac{a_2}{a_1}) + x_1^4 f_6(\frac{a_2}{a_1}).
\end{equation*} 
The smooth projective model for this affine equation is the unique elliptic curve with $j$-invariant $0$.

Next we describe the elliptic fibration structure on $W_F$ in more details.
	For $a=[a_1:a_2]\in P$, let $\widehat{L}_{a}$ be the strict transform of the line $L_{a}=\{[X_0: X_1: X_2]\big{|} a_2 X_1=a_1 X_2\}\subset \PP V$ in $R=\Bl_{\{q_1, q_2, q_3\}} (\Bl_p(\PP V))$. 
	
	Then $\widehat{L}_{a}$ intersects $\widehat{K}$ in three points and  $\widehat{E_p}$ in one point.
If these four points are distinct (which happens if and only if $a\in P-Z(F_3 F_6)$, see Lemma \ref{lemma: intersect} below), the preimage of $\widehat{L}_{a}$ in $W_F$ is a double cover of $\widehat{L}_{a}$ branched over four distinct points, hence a smooth genus $1$ curve that is preserved by the action of $\mu_3$ on $W_F$. 
	The composition $W_F\rightarrow R\rightarrow Q\rightarrow P$ is hence an elliptic  fibration with a fiberwise action of $\mu_3$. 

	Let $a^1,a^2, a^3$ be the zeros of $F_3$ and $b^1,b^2,\dots, b^6$ the zeros of  $F_6$. Recall that $a^i=[a_1^i: a_2^i]$ and $b^j=[b_1^j: b_2^j]$. By straightforward calculation we obtain:
\begin{lem}
\label{lemma: intersect}
The line $L_{a^i}$ ($1\le i\le 3$) intersects $Z(F)$ at $p=[1:0:0]$ with multiplicity $6$. 
The line $L_{b^j}$ ($1\le j\le 6$) intersects $Z(F)$ at $p=[1:0:0]$ and another point $[0:b_1^j:b_2^j]$, both  with multiplicity $3$. 
Other lines $L_{a}$ for $a\in P-Z(F_3 F_6)$ intersect $Z(F)$ at $p$ with multiplicity $3$, and at three other points with multiplicity one.
\end{lem}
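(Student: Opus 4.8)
The plan is elementary: parametrize the line $L_a$, restrict $F$ to it, and factor the resulting binary sextic form; the three cases in the statement will fall out according to whether $F_3(a)F_6(a)\ne 0$, $F_3(a)=0$, or $F_6(a)=0$, and these three alternatives are exhaustive and pairwise disjoint precisely because $F\in\calV^\circ$ means $F_3F_6$ has no multiple root, so $F_3$ and $F_6$ share no zero and each has only simple zeros.

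First I would write a point of $L_a=\{a_2X_1-a_1X_2=0\}$ as $[u:va_1:va_2]$ with $[u:v]\in\PP^1$, so that $p=[1:0:0]$ corresponds to $v=0$ while the point $[0:a_1:a_2]$ on the line $P=\{X_0=0\}$ corresponds to $u=0$. Substituting into $F=X_0^3F_3(X_1,X_2)+F_6(X_1,X_2)$ and using the homogeneity $F_i(va_1,va_2)=v^iF_i(a_1,a_2)$ gives
\begin{equation*}
F\big|_{L_a}=u^3v^3F_3(a)+v^6F_6(a)=v^3\big(u^3F_3(a)+v^3F_6(a)\big),
\end{equation*}
a binary form of degree $6$ in $[u:v]$ whose divisor on $L_a\cong\PP^1$ is exactly the scheme-theoretic intersection $L_a\cap Z(F)$.

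Now I would read off the three cases. If $a\in P-Z(F_3F_6)$, then $F_3(a)\ne 0$ and $F_6(a)\ne 0$: the factor $v^3$ contributes multiplicity $3$ at $p$, while $u^3F_3(a)+v^3F_6(a)$ is, up to a nonzero scalar, of the form $w^3-c$ with $w=u/v$ and $c\ne 0$, hence has three distinct roots away from $p$, each of multiplicity one. If $a=a^i$ is a zero of $F_3$, then $F_6(a^i)\ne 0$ and $F|_{L_{a^i}}=v^6F_6(a^i)$, so $p$ is the only intersection point, with multiplicity $6$. If $a=b^j$ is a zero of $F_6$, then $F_3(b^j)\ne 0$ and $F|_{L_{b^j}}=u^3v^3F_3(b^j)$, so $L_{b^j}$ meets $Z(F)$ precisely at $p$ (where $v=0$) and at $[0:b_1^j:b_2^j]$ (where $u=0$), each with multiplicity $3$; this second point indeed lies on $Z(F)$ since $F(0,b_1^j,b_2^j)=F_6(b_1^j,b_2^j)=0$. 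In every case the multiplicities sum to $6$, consistent with B\'ezout, which is a convenient sanity check.

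There is essentially no obstacle here; the one place to be careful is the choice of parametrization, which must simultaneously exhibit both $p$ and the point at infinity $[0:a_1:a_2]$ of $L_a$ as honest points of the chosen chart so that the middle case is seen correctly, together with the trivial but essential use of the $\calV^\circ$ hypothesis to ensure the three cases neither overlap nor leave out any $a\in P$ and that each zero of $F_3$ or $F_6$ is simple.
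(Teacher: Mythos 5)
Your proof is correct and is exactly the ``straightforward calculation'' the paper alludes to without spelling out: parametrize $L_a$ by $[u:va_1:va_2]$, substitute into $F$ to get $v^3\bigl(u^3F_3(a)+v^3F_6(a)\bigr)$, and read off the multiplicities in each of the three cases, using that $F_3F_6$ has no multiple roots to see that $F_3$ and $F_6$ cannot vanish simultaneously and that the cubic factor has three simple roots when $F_3(a)F_6(a)\ne 0$.
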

 
    Each $\widehat{L}_{a^i}$ intersects with $\widehat{K}$ with multiplicity $2$, and does not meet $\widehat{E}_p, E_1, E_2$ and $E_3$.
    Each $\widehat{L}_{b^j}$ intersects $\widehat{K}$ with multiplicity $3$ and $\widehat{E}_p$ with multiplicity $1$, and does not meet $E_1, E_2$ and $E_3$.
    The preimage of $\widehat{L}_{a^i}$ in $W_F$ is the union of two projective lines and the preimage of $E_i$ is a projective line. The union of three lines which intersect at one common point is a singular fiber of Kodaira type IV.
    The preimage of $\widehat{L}_{b^j}$ is a cuspidal curve, which is a singular fiber of Kodaira type II.
We sum up the above discussion as follows.
\begin{prop}
\label{proposition: sigular fiber type}
The discriminant set of the elliptic fibration $W_F\to P$ is $Z(F_3 F_6)\subset P$. The fibers over $Z(F_3)$ are of Kodaira type IV (the union of $3$ smooth rational curves intersecting at one point), and the fibers over $Z(F_6)$ are of Kodaira type II (cuspidal cubic curves). 
\end{prop}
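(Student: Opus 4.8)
The plan is to identify each singular fiber of $\pi_F\colon W_F\to P$ by pulling back the lines $L_a$ and analyzing the branch behavior of the double cover, using the explicit intersection data from Lemma \ref{lemma: intersect}. The discriminant set is exactly the set of $a\in P$ for which the four points $\widehat{L}_a\cap(\widehat{K}\cup\widehat{E}_p)$ fail to be distinct; since on a general line the four intersection points with $Z(F)$ (counting $p$) are distinct, and by Lemma \ref{lemma: intersect} the points can only collide over the zeros of $F_3$ or $F_6$, the discriminant is $Z(F_3F_6)$. It remains to identify the Kodaira type at each of these nine points.

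First I would treat the fibers over $a^i\in Z(F_3)$. By Lemma \ref{lemma: intersect}, $L_{a^i}$ meets $Z(F)$ only at $p$ with multiplicity $6$, so after the blowups $\widehat{L}_{a^i}$ meets $\widehat{K}$ at the point $q_i$ with multiplicity $2$ and is disjoint from $\widehat{E}_p$ and from $E_1,E_2,E_3$ except that it passes through $E_i$ (since it passed through $q_i$). Actually I should say: $\widehat{L}_{a^i}$ meets $\widehat K$ with total multiplicity $2$ at a single point lying on $E_i$. The double cover of $\widehat{L}_{a^i}$ (a $\PP^1$) branched along a length-$2$ divisor supported at one point degenerates: the preimage of $\widehat{L}_{a^i}$ splits into two $\PP^1$'s meeting at that point. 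Together with the double cover of $E_i$ — which is branched over the two points $E_i\cap\widehat{K}$ and $E_i\cap\widehat{E}_p$ and hence is a third $\PP^1$ — one gets three rational curves through a common point, i.e.\ Kodaira type IV. One must check the three components are arranged as in a type IV fiber (Euler number $4$, and the configuration is three lines through one point) rather than, say, a type $I_0^*$ or other; the cleanest justification is that the total Euler number of the fibration is $24$, we have six type II fibers (Euler number $2$ each) accounting for $12$, leaving $12$ for the three fibers over $Z(F_3)$, so each must have Euler number $4$, which forces type IV given that we have produced three rational components meeting at one point.

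Next, for $b^j\in Z(F_6)$: by Lemma \ref{lemma: intersect}, $L_{b^j}$ meets $Z(F)$ at $p$ with multiplicity $3$ and at $[0:b_1^j:b_2^j]$ with multiplicity $3$. After blowing up $p$, the strict transform $\widehat{L}_{b^j}$ meets $\widehat{E}_p$ transversally in one point and meets $\widehat{K}$ at the single point $[0:b_1^j:b_2^j]$ with multiplicity $3$ (it does not pass through any $q_i$, hence is disjoint from $E_1,E_2,E_3$). The double cover of $\PP^1$ branched along a divisor of degree $3$ concentrated at one point, plus one further simple branch point (from $\widehat{E}_p$), is irreducible with a cusp over the length-$3$ point; this is a cuspidal cubic, Kodaira type II. I would confirm irreducibility by noting the branch divisor is not $2$-divisible even locally in a way that splits the cover, and confirm the singularity is a cusp (not a node or worse) from the local equation $s^2 = u^3$ after choosing a local coordinate $u$ on $\widehat L_{b^j}$ vanishing to order $1$ at the triple point. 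Euler number $2$ then matches.

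The main obstacle is the careful local analysis near the points $q_i$ on $\widehat E_p$ and near the triple intersection points: one must verify that after the two blowups the branch locus $\widehat K\cup\widehat E_p$ is smooth and meets $\widehat L_{a}$ in the claimed way, and in particular that $\widehat K$ is smooth and transverse to $E_i$ (so the double cover of $E_i$ really is a smooth $\PP^1$ and the three components of the type IV fiber meet pairwise transversally at the single common point). This is where the hypothesis $F\in\calV^\circ$ (no multiple roots of $F_3F_6$) is essential, since it guarantees the three points $q_1,q_2,q_3$ are distinct and that $\widehat K$ is smooth there. Once this local picture is pinned down, identifying the Kodaira types is routine, and the Euler-number bookkeeping ($3\cdot 4 + 6\cdot 2 = 24 = e(\text{K3})$) serves as a consistency check.
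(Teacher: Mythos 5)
Your proposal is correct and follows essentially the same route as the paper: both arguments rest on Lemma~\ref{lemma: intersect} and then track how the fiber components $\widehat{L}_a$ (together with the relevant exceptional curves $E_i$) meet the branch locus $\widehat{K}\cup\widehat{E}_p$ in the double cover. In fact your analysis is slightly more careful than the paper's at one point: the paper asserts that $\widehat{L}_{a^i}$ ``does not meet $\widehat{E}_p, E_1, E_2$ and $E_3$,'' but as you correctly observe (and as a short computation in the blowup chart confirms), $L'_{a^i}$ passes through $q_i$, so its strict transform $\widehat{L}_{a^i}$ does meet $E_i$ -- and precisely at the point where $\widehat{L}_{a^i}$ is tangent to $\widehat{K}$. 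This is not a cosmetic detail: it is exactly what guarantees that the two rational components over $\widehat{L}_{a^i}$ and the rational curve over $E_i$ all pass through a single common point, which is needed for the type IV configuration the paper then invokes. Your Euler-number bookkeeping ($3\cdot 4 + 6\cdot 2 = 24$) is a useful cross-check that the paper does not state explicitly, though strictly speaking once you have the type II fibers (from the cuspidal local model $s^2=u^3$) and the local configuration of three rational curves through a common point, the identification as type IV is already forced by Kodaira's classification without invoking $e(\text{K3})=24$.
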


\begin{rmk}\label{remark: weierstrass fibration} 
	In fact, given an elliptic K3 surface $W$ with a section and smooth fibers isomorphic to the elliptic curve with $j$-invariant $0$. 
	Assume $W$ has $9$ singular fibers, among which $3$ are of Kodaira type \textup{IV} and $6$ are of Kodaira type \textup{II}. 
	By the discussion in \cite[Chapter $11$, $2.2$]{huybrechts2016lectures}, the Weierstrass model of $W\to \PP^1$ can be written as
	$$
	y^2 z = x^3 + a(t)x z^2 + b(t)z^3,
	$$
	where $a(t)\in H^0(\PP^1,\calO_{\PP^1}(8))$, $b(t)\in H^0(\PP^1,\calO_{\PP^1}(12))$ and $t$ is the coordinate of the base $\PP^1$.
	The $j$-invariant of the fiber over $t\in \PP^1$ is $\frac{1728\cdot 4a(t)^3}{4a(t)^3+27b(t)^2}$, which equals to zero in our case. Thus $a(t)=0$ for all $t\in \PP^1$.
	Hence the Weierstrass model of $W\to \PP^1$ has the following form
	$$
	y^2 z = x^3 + b(t)z^3.
	$$
	
	The order of vanishing of $4a(t)^3+27b(t)^2$ at $t$ is determined by the type of the fiber over $t$, see \cite[Chapter $11$, $2.4$]{huybrechts2016lectures}. Explicitly, $b(t)$ has $3$ zeros of order $2$ (corresponding to fibers of type IV) and $6$ ordinary zeros (corresponding to fibers of type II).
	Therefore, $W$ is uniquely determined up to isomorphism by the position of $9$ discriminant points. 
%Therefore $W$ is uniquely determined up to isomorphism by the above condition.Hence $W$ can be realized as a 
Let $F=X^3_0 F_3+F_6$, where the zeros of $F_3$ ($F_6$ resp.) coincide with the $3$ zeros of order $2$ ($6$ ordinary zeros resp.) of $b(t)$. Then $W_F$ has also such a fibration structure with discriminant set coincide with that of $W$. Thus $W$ and $W_F$ are isomorphic. 
\end{rmk}
	
%(in Kodaira's classification, see \cite{kodaira1960compact,kodaira1963compact)})

\section{GIT Constructions and Deligne-Mostow Theory}
\label{section: gitdm}
In this section we study the GIT-model for sextic curves $Z(F)=V(X_0^3 F_3+F_6)$. 
This is similar to certain constructions in \cite{yu2018moduli} of moduli spaces of nodal sextic curves. 
In our case, this approach leads to a GIT-model for weighted points on $\PP^1$. We shall explain a natural relation between our construction and the Deligne-Mostow theory, see Proposition \ref{proposition: two git}.

\subsection{A GIT Construction}
\label{subseciton: GIT moduli of type I}
Recall that $\calV$ is the vector space of sextic polynomials $X_0^3 F_3+F_6$ and  $\calV^\circ$ is the subset of $\calV$ consisting of elements such that $F_3 F_6$ has no multiple roots. 
Let $\calV_i$ be the vector space of polynomials $X_0^{6-i} F_i$. 
We have $\calV=\calV_3 \oplus \calV_6$. 
We denote by $\PP \calV$ the projectivization of $\calV$ and by $\PP \calV^\circ\subset \PP\calV$ the open subset defined by $\calV^\circ$.

Let $\CC[X_0, X_1, X_2]_6$ be the space of sextic polynomials in $X_0, X_1, X_2$. Define $g(F)=F\circ g^{-1}$ for any $g\in \SL(3, \CC)$ and $F\in \CC[X_0, X_1, X_2]_6$. 
We then have an action of $\SL(3, \CC)$ on $\CC[X_0, X_1, X_2]_6$. 
Let $G_S$ be the $\SL(3, \CC)$-stabilizer of the decomposition $\CC^3=\CC\oplus \CC^2$, which is generated by $\diag(1, g)$ for $g\in \SL(2, \CC)$ and elements $\diag(t^2, t^{-1}, t^{-1})$ for $t\in \CC^{\times}$. Then $G_S$ leaves $\calV\subset \CC[X_0, X_1, X_2]_6$ invariant. 
Note that $G_S$ is naturally isomorphic to $\GL(2)$, hence reductive.
The group $Z=\{\diag(t^2, t^{-1}, t^{-1})\big{|} t\in \CC^{\times}\}$ is the center of $G_S$. Consider the action of $G_S$ on the polarized variety $(\PP \calV, \calO(1))$. 
The points in $\PP \calV^\circ$ are stable under this action (by Shah \cite{shah1980complete}). 
%Denote by $\PP \calV^{ss}$ the set of semi-stable points. 

Let $\calF_S\coloneqq G\dbs \PP \calV^\circ$ be the GIT quotient. We will see that $\calF_S$ is naturally isomorphic to an open subspace of an arithmetic ball quotient. 
We postpone the discussion of this aspect to \S \ref{section: hodge structure}, and focus on the GIT-model now. 
The following proposition will lead to a relation between $\calF_S$ and the moduli space of $9$ weighted points on $\PP^1$ from the perspective of Deligne-Mostow theory (see \S \ref{subsection: Review of Deligne-Mostow's Theory}).

\begin{prop}
\label{proposition: two git}
We have an isomorphism 
\begin{equation}
\label{equation: quotient by center}
Z\dbs (\PP \calV, \calO(1))\cong (\PP \calV_3\times \PP \calV_6, \calO(2)\boxtimes \calO(1))
\end{equation}
of polarized varieties. 
\end{prop}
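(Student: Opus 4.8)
The plan is to present both sides as $\Proj$ of the same graded ring, so that the polarizations match automatically. The first step is to record the $Z$-action on the summands of $\calV=\calV_3\oplus\calV_6$. With the convention $g(F)=F\circ g^{-1}$, the generator $\diag(t^{2},t^{-1},t^{-1})$ of $Z$ sends $X_0^{3}F_3(X_1,X_2)\mapsto t^{-3}X_0^{3}F_3(X_1,X_2)$ and $F_6(X_1,X_2)\mapsto t^{6}F_6(X_1,X_2)$, so $Z\cong\CC^{\times}$ acts on $\calV_3$ with weight $-3$ and on $\calV_6$ with weight $6$. Hence in $H^{0}(\PP\calV,\calO(d))=\Sym^{d}\calV^{*}=\bigoplus_{a+b=d}\Sym^{a}\calV_3^{*}\otimes\Sym^{b}\calV_6^{*}$ the summand $\Sym^{a}\calV_3^{*}\otimes\Sym^{b}\calV_6^{*}$ carries $Z$-weight $3a-6b$.

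Next I would compute the invariant ring. The relation $3a-6b=0$ forces $a=2b$, hence $d=3b$; so $H^{0}(\PP\calV,\calO(d))^{Z}=0$ unless $3\mid d$, while $H^{0}(\PP\calV,\calO(3b))^{Z}=\Sym^{2b}\calV_3^{*}\otimes\Sym^{b}\calV_6^{*}$. By the definition of the GIT quotient (for the natural linearization of $\calO(1)$),
\begin{equation*}
Z\dbs(\PP\calV,\calO(1))=\Proj\Big(\bigoplus_{d\ge 0}H^{0}(\PP\calV,\calO(d))^{Z}\Big)=\Proj\Big(\bigoplus_{b\ge 0}\Sym^{2b}\calV_3^{*}\otimes\Sym^{b}\calV_6^{*}\Big),
\end{equation*}
where the last ring is graded by $b$ (equivalently by $d=3b$) and its polarization is the descent of $\calO(3)$ from $\PP\calV$. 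The point worth flagging is that $Z$-invariant sections of $\calO(1)$ first appear in degree $3$, with none in degrees $1$ and $2$; this is exactly what forces the descended ample bundle to be $\calO(3)$ and not some other power.

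On the other hand, the section ring of $(\PP\calV_3\times\PP\calV_6,\calO(2)\boxtimes\calO(1))$ is $\bigoplus_{b\ge 0}H^{0}(\PP\calV_3,\calO(2b))\otimes H^{0}(\PP\calV_6,\calO(b))=\bigoplus_{b\ge 0}\Sym^{2b}\calV_3^{*}\otimes\Sym^{b}\calV_6^{*}$, again graded by $b$, and it is generated in degree $1$ because the Segre--Veronese embedding of bidegree $(2,1)$ is projectively normal (equivalently $\Sym^{k}(\Sym^{2}\calV_3^{*}\otimes\calV_6^{*})\twoheadrightarrow\Sym^{2k}\calV_3^{*}\otimes\Sym^{k}\calV_6^{*}$ for every $k$); therefore its $\Proj$ is $\PP\calV_3\times\PP\calV_6$ with polarizing bundle exactly $\calO(2)\boxtimes\calO(1)$. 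Matching this with the previous display yields an isomorphism of graded rings, hence the asserted isomorphism of polarized varieties. As a sanity check on the underlying varieties, the $Z$-invariant morphism $\{v_3\neq 0\neq v_6\}\to\PP\calV_3\times\PP\calV_6$, $[(v_3,v_6)]\mapsto([v_3],[v_6])$, is surjective with fibres exactly the $Z$-orbits (because $t\mapsto t^{9}$ is onto $\CC^{\times}$), hence descends to a bijective morphism from the quotient, which is an isomorphism by Zariski's main theorem.

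The only genuinely delicate point is the polarization bookkeeping described above — recognizing that invariants of $\calO(1)$ start in degree $3$ and that the degree-$3b$ piece is precisely $\Sym^{2b}\calV_3^{*}\otimes\Sym^{b}\calV_6^{*}$, and invoking projective normality of the Segre--Veronese embedding so that the graded-ring isomorphism upgrades to an isomorphism of polarized varieties. Everything else is a routine computation with the weights of a one-dimensional torus.
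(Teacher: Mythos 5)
Your proposal is correct and takes essentially the same approach as the paper: compute the $Z$-invariant part of $\bigoplus_d \Sym^d\calV^*$, observe it equals $\bigoplus_b \Sym^{2b}\calV_3^*\otimes\Sym^b\calV_6^*$, and take $\Proj$. You are somewhat more explicit than the paper about the polarization bookkeeping (invariants first appearing in degree $3$, projective normality of the Segre--Veronese), but the core computation is identical.
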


\begin{proof}
	We need to calculate the invariant subalgebra of $\bigoplus_{w=1}^{\infty} \Sym^w \calV^*$ under the induced action of $Z$. We have $\calV= \calV_3\oplus \calV_6$ and $\calV^*= \calV_3^*\oplus \calV_6^*$. 
	Hence
	\begin{equation*}
	\Sym^w \calV^*=\bigoplus_{i+j=w} \Sym^i \calV_3^*\otimes \Sym^j \calV_6^*,
	\end{equation*}
	where $i, j$ are non-negative integers. 
	For $g_t=\diag(t^2, t^{-1}, t^{-1})\in Z$, we have $g_t(X_0^{6-k} F_k)=t^{3k-12} X_0^{6-k} F_k$. 
	Thus the action of $g_t$ on $\calV_3^*$, $\calV_6^*$ is by scalars $t^{-3}, t^6$ respectively.
	Hence the action of $g_t$ on $\Sym^i \calV_3^*\otimes \Sym^j \calV_6^*$ is by the scalar $t^{6j-3i}$. Thus a nonzero element in $\Sym^i \calV_3^*\otimes \Sym^j \calV_6^*$ is invariant under the action of $Z$ if and only if $3i-6j=0$. 
	We have
	\begin{equation*}
	(\bigoplus_{w=1}^{\infty} \Sym^w \calV^*)^Z=\bigoplus_{j=1}^{\infty} (\Sym^{2j} \calV_3^*\otimes \Sym^{j} \calV_6^*)
	\end{equation*}
	By taking $\Proj$-constructions on both sides, we obtain the isomorphism \eqref{equation: quotient by center}
\end{proof}

\begin{rmk}
	A similar result was previously obtained by Yu and Zheng in \cite[Proposition 6.5]{yu2018cubic} to study moduli spaces of cubic fourfolds with specified group actions.
\end{rmk}

\subsection{Deligne-Mostow Theory} 
\label{subsection: Review of Deligne-Mostow's Theory}
In this section we briefly recall the Deligne-Mostow theory, see \cite{deligne1986monodromy}, \cite{mostow1986generalized,mostow1988discontinuous}, \cite{looijenga2007uniformization}. 
Let $N\ge 5$ be a positive integer, and $(\alpha_1, \cdots, \alpha_N)$ be a tuple of rational positive numbers such that $\alpha_1+\cdots+\alpha_N=2$ and $0<\alpha_i<1$. 
Let $d$ be the lowest common multiple of the denominators of $\alpha_1,\cdots, \alpha_N$. 
Recall $P=\PP V_2$ is a projective line. Let $[N]\coloneqq \{1,2,\cdots,N \}$. 
%Let $I\subset [N]$ be a subset such that $\alpha_i=\alpha_j$ for all $i,j\in I$.
Let $\FS_\alpha$ be the group of permutations of $[N]$ which leave the map  $\alpha: [N]\to \QQ$ invariant. 

We denote by $(P^N)^\circ$ the space of all injective maps from $[N]$ to $P$.
For $m\in (P^N)^{\circ}$ we have distinct linear forms $l_i\in V_2^{*}$ such that $[Z(l_i)]=m(i) \in P$.
The equation
\begin{equation*}
	y^d = \prod_{i=1}^{N}l_i^{d\alpha_i}
\end{equation*} 
defines a curve in the weighted projective space $\PP(2,1,1)$, with normalization $C_m$ called the Deligne-Mostow curve.
Let $b_{C_m}$ be the symplectic bilinear form on $H^1(C_m, \CC)$ given by the cup product. We denote by $\mu_d\subset \CC^{\times}$ the group of $d$-th roots of unity. The curve $C_m$ admits a natural action by $\mu_d$, with the generator $\zeta_d\coloneqq \exp(2\pi \sqrt{-1}/d)$ sending $y$ to $\zeta_d y$. 
Thus $\mu_d$ has an induced action on the cohomology group $H^1(C_m, \ZZ)$. 
This action diagonalizes if take $\QQ(\zeta_d)$ as coefficient: we can decompose $H^1(C_m, \QQ(\zeta_d))$ into characteristic subspaces with respect to this action. 
For a character $\rho\colon \mu_d\to \CC^{\times}$, we denote by $H^1(C_m, \QQ(\zeta_d))_{\rho}$  its characteristic subspace in $H^1(C_m, \QQ(\zeta_d))$. 
There is a natural Hermitian form $h_{\rho}$ on $H^1(C_m, \QQ(\zeta_d))_{\rho}$ defined by $h_{\rho}(x, y)=\frac{\sqrt{-3}}{3} b_{C_m}(x, \overline{y})$. 
Let $\rho_1$ be the natural inclusion of $\mu_d$ into $\CC^{\times}$. We write $T_{C_m}=H^1(C_m, \QQ(\zeta_d))_{\rho_1}$ and $h_{C_m}=h_{\rho_1}$. 
By Deligne-Mostow theory (see \cite[Proposition 4.1]{looijenga2007uniformization}), we have:
\begin{prop}
The Hermitian form $h_{C_m}$ has signature $(1, N-3)$.
\end{prop}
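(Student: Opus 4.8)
The plan is to deduce the signature from the Chevalley--Weil (eigenspace) formula for $H^1(C_m)$ together with the second Hodge--Riemann bilinear relations; this is in fact \cite[Proposition~4.1]{looijenga2007uniformization}, and I would reproduce the short argument along the following lines. First I would note that the $\mu_d$-action on $C_m$ is by holomorphic automorphisms, hence preserves the Hodge decomposition of $H^1(C_m,\CC)$, so $H^{1,0}(C_m)=H^0(C_m,\Omega^1_{C_m})$ breaks into characteristic subspaces and for every character $\rho$ one gets a Hodge decomposition
\[
H^1(C_m,\CC)_{\rho}=H^{1,0}(C_m)_{\rho}\oplus H^{0,1}(C_m)_{\rho},\qquad H^{0,1}(C_m)_{\rho}=\overline{H^{1,0}(C_m)_{\bar\rho}}.
\]
Extending scalars along $\rho_1\colon\QQ(\zeta_d)\hookrightarrow\CC$ identifies $T_{C_m}\otimes_{\QQ(\zeta_d)}\CC$ with $H^1(C_m,\CC)_{\rho_1}$, and under this identification $h_{C_m}$ is the Hermitian form $(x,y)\mapsto\tfrac{\sqrt{-3}}{3}\,b_{C_m}(x,\overline{y})$; thus it suffices to compute the two Hodge numbers of $H^1(C_m,\CC)_{\rho_1}$ and to determine the sign of $h_{C_m}$ on each summand.

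Next I would compute the eigenspace Hodge numbers. Realizing $C_m$ as the normalization of $y^d=\prod_i l_i^{d\alpha_i}$ and using eigen-differentials of the shape $x^{j}\,dx/y^{k}$, the Chevalley--Weil formula gives
\[
\dim H^{1,0}(C_m)_{\rho_1^{k}}=\Big(\sum_{i=1}^N\langle k\alpha_i\rangle\Big)-1\qquad(1\le k\le d-1),
\]
where $\langle\cdot\rangle$ denotes fractional part. Since $0<\alpha_i<1$ and $\sum_i\alpha_i=2$, the case $k=1$ yields $\dim H^{1,0}(C_m)_{\rho_1}=1$, while $\dim H^{0,1}(C_m)_{\rho_1}=\dim H^{1,0}(C_m)_{\rho_1^{d-1}}=\sum_i(1-\alpha_i)-1=N-3$; in particular $\dim_{\CC}H^1(C_m,\CC)_{\rho_1}=N-2$.

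Finally I would invoke positivity. The cup product $b_{C_m}$ polarizes the weight-one Hodge structure $H^1(C_m)$, so by the second Hodge--Riemann bilinear relations $\sqrt{-1}\,b_{C_m}(v,\overline v)>0$ for $0\ne v\in H^{1,0}(C_m)$ and $<0$ for $0\ne v\in H^{0,1}(C_m)$. As $\tfrac{\sqrt{-3}}{3}=\sqrt{-1}/\sqrt{3}$ is a positive real multiple of $\sqrt{-1}$, the form $h_{C_m}$ is positive definite on the one-dimensional space $H^{1,0}(C_m)_{\rho_1}$ and negative definite on $H^{0,1}(C_m)_{\rho_1}$; moreover these two summands are $h_{C_m}$-orthogonal, because $H^{1,0}(C_m)$ is isotropic for $b_{C_m}$ (a curve has $H^{2,0}=0$) while conjugation carries $H^{0,1}(C_m)$ into $H^{1,0}(C_m)$. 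Combining with the dimension count, $h_{C_m}$ has signature $(1,N-3)$.

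The one step requiring genuine care is the Chevalley--Weil computation: one must correctly track the ramification of $C_m\to\PP^1$ over each branch point $m(i)$ (where the local monodromy has order $d/\gcd(d\alpha_i,d)$) and the contribution at $\infty$, so that the two relevant eigen-dimensions come out to be exactly $1$ and $N-3$; everything after that is formal, and alternatively one may simply cite \cite[Proposition~4.1]{looijenga2007uniformization}.
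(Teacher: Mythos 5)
Your proposal is correct and is in fact the standard argument (Chevalley--Weil/eigenspace Hodge numbers plus the second Hodge--Riemann bilinear relation); the paper itself supplies no proof here, instead citing \cite[Proposition~4.1]{looijenga2007uniformization}, so there is nothing to compare against. All the individual steps check out: the orthogonality of $H^{1,0}_{\rho_1}$ and $H^{0,1}_{\rho_1}$ under $h_{C_m}$ (because $b_{C_m}(x,\bar y)$ pairs two $(1,0)$-classes and $H^{2,0}$ of a curve vanishes), the dimension count $1+(N-3)=N-2$, and the sign analysis via $\tfrac{\sqrt{-3}}{3}=\sqrt{-1}/\sqrt{3}$.

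The one thing you should watch is a sign/orientation convention on the $\mu_d$-action, because your formula
\[
\dim H^{1,0}(C_m)_{\rho_1^{k}}=\Big(\sum_{i=1}^N\langle k\alpha_i\rangle\Big)-1
\]
is stated in the convention where $\mu_d$ acts on cohomology by $g\mapsto(g^{-1})^*$. The paper's own Chevalley--Weil formula in \S\ref{subsection: c-w formula} uses the opposite (direct pullback) convention, which replaces $\langle k\alpha_i\rangle$ by $\langle -k\alpha_i\rangle=\langle(d-k)\alpha_i\rangle$; applied to the same curve $C$ (with $N=9$) it yields $m_{\rho_1}=6$ and $m_{\rho_5}=1$, i.e.\ the pullback $\rho_1$-eigenspace has $h^{1,0}=N-3$, not $1$. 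The two computations are of course equivalent under $k\leftrightarrow d-k$, and it is precisely your choice of convention that makes the proposition come out as $(1,N-3)$ rather than $(N-3,1)$ with $T_{C_m}=H^1(C_m,\QQ(\zeta_d))_{\rho_1}$. Put differently, the subspace the paper calls $T_{C_m}$ in \S\ref{subsection: Review of Deligne-Mostow's Theory} is the same as the subspace it calls $T_C$ (the $\rho_D=\rho_5$-eigenspace in the pullback convention) in \S\ref{section: hodge structure}; both have Hodge numbers $(1,N-3)$. Worth a remark in a final write-up so that a reader using the paper's explicit formula \eqref{equation: C-W} is not confused into computing the opposite signature.
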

For a unitary Hermitian space $(T, h)$ over $\QQ(\zeta_3)$ we write
\begin{equation*}
\BB(T)\coloneqq \PP \{x\in T\otimes \CC \big{|} h(x, x)>0\}
\end{equation*} 
for the associated complex hyperbolic ball. The isomorphism type of the unitary Hermitian form $(T_{C_m}, h_{C_m})$ does not depend on the choice of $m$.

By \cite[Construction $3.2.1$]{rohde2009cyclic} we have a family $\calC\to (P^N)^\circ$ of Deligne-Mostow curves.
It is a branched covering of $P\times (P^N)^\circ$ with discriminant locus $D=\sum_{k=1}^{9}\alpha_k D_k$, where $D_k=\{(m(k),m) | m\in (P^N)^\circ \}\subset P\times (P^N)^\circ$.
The group $\FS_\alpha$ acts on $(P^N)^\circ$ freely in a natural way.
Denote by $\overline{D}_k$ the image of $D_k$ in $P\times ((P^N)^\circ/\FS_{\alpha})$.
For any $g\in \FS_{\alpha}$, $\overline{D}_k=\overline{D}_{g\cdot k}$.
Define $\overline{D}\coloneqq \sum_{k=1}^{N}\frac{\alpha_k}{|\FS_{\alpha}\cdot k|} \overline{D}_k$ to be a divisor of $P\times ((P^N)^\circ/\FS_{\alpha})$.
Let $\overline{\pi}\colon \overline{\calC}\to (P^N)^\circ/\FS_{\alpha}$ be the cyclic covering of $P\times ((P^N)^\circ/\FS_{\alpha})$ branched over $\overline{D}$. Then $\overline{\pi}$ is a family of Deligne-Mostow curves.
 
From now on we fix a base point $o$ on $(P^N)^\circ/\FS_{\alpha}$, and denote by $\BB_{DM} \coloneqq \BB(T_{C_o})$.
%Let $\pi_C\colon \calU_C\to (P^N)^\circ$ be the universal family of Deligne-Mostow curves. 
There is a sub-local system $\calT_{DM}$ of $R^1 \overline{\pi}_*\QQ(\zeta_d)$ whose stalk at $m\in (P^N)^\circ/\FS_{\alpha}$ is $T_{C_m}\subset H^1(C_m, \QQ(\zeta_d))$.
Let $\Gamma_{DM}$ be the monodromy group of $\calT_{DM}$ (with the base point $o$). 
Then $\Gamma_{DM}$ acts naturally on $T_{C_o}$, hence also on $\BB_{DM}$. 
From the above construction, we also have an analytic morphism $\Prd_{DM}\colon (P^N)^\circ/\FS_{\alpha}\to \Gamma_{DM}\bs \BB_{DM}$, which is called the period map.

We consider the action of $\SL(2, \CC)$ on $P^N$ together with the polarization $\La=\boxtimes_{i=1}^N\calO(2d\alpha_i)$.
From \cite[\S 4.1]{deligne1986monodromy}, the points in $(P^N)^\circ$ are stable. 
The $\FS_\alpha$-action on $(P^N)^\circ$ descends to the GIT-quotient $\SL(2)\dbs (P^N)^{\circ}$.
We define
\begin{equation}
\label{definition of GIT model of D-M}
\calF_{DM}\coloneqq (\SL(2)\dbs (P^N)^{\circ})/\FS_\alpha\cong \SL(2)\dbs ((P^N)^{\circ}/\FS_\alpha).
\end{equation}
The period map descends to $\calF_{DM}$, which we still denote by $\Prd_{DM}$.
\begin{thm}[\cite{deligne1986monodromy}, \cite{mostow1986generalized}]
\label{theorem: deligne-mostow}
    %\edit{\sout{Let $I\subset [N]$ be a subset such that $\alpha_i=\alpha_j$ for all $i,j\in I$.}}
	Assume that $(\alpha_1, \cdots, \alpha_N)$ satisfies \\
	\textup{($\Sigma$INT):} $0<\alpha_i<1$ for all $i$,  $\sum \alpha_i=2$ and for $1\le i<j\le N$ such that $\alpha_i+\alpha_j<1$, we have $(1-\alpha_i-\alpha_j)^{-1}$ is an integer if $\alpha_i\ne \alpha_j$, or a half-integer if $\alpha_i=\alpha_j$. \\
	Then the group $\Gamma_{DM}$ is a lattice, and the period map $\Prd_{DM}\colon \calF_{DM}\to \Gamma_{DM}\bs \BB_{DM}$ is an open embedding.
\end{thm}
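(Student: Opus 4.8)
This is the Deligne--Mostow theorem, and the plan is to reprove it by the metric-completion method (following, e.g., \cite{looijenga2007uniformization}). The easy half comes first: the monodromy of $\calT_{DM}$ preserves the cup-product pairing, hence the Hermitian form $h_{C_o}$, so $\Gamma_{DM}$ sits inside $\mathrm{U}(T_{C_o},h_{C_o})\cong\mathrm{U}(1,N-3)$ and acts on $\BB_{DM}$ by holomorphic isometries for the Bergman metric; moreover the developing (Schwarz) map $\widetilde{\Prd}_{DM}\colon\widetilde{\calF}_{DM}\to\BB_{DM}$ on the universal cover is $\Gamma_{DM}$-equivariant and, by a first-order computation using the signature statement recalled above, a local biholomorphism. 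What remains — that $\Gamma_{DM}$ is a lattice and that $\Prd_{DM}$ is injective on $\calF_{DM}$ — must be extracted from the behaviour of the period map where points of $[N]$ collide, and this is exactly where ($\Sigma$INT) enters.

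Concretely, I would proceed in three steps. First, \emph{compactify}: take the GIT compactification $\overline{\calF}_{DM}$ of $\calF_{DM}$ for the linearization $\boxtimes_{i=1}^{N}\calO(2d\alpha_i)$, whose boundary strata parametrize configurations in which a cluster $I\subset[N]$ with $\sum_{i\in I}\alpha_i<1$ has collided, taken modulo $\mathrm{PGL}_2$ and $\FS_\alpha$; since all stabilizers on the stable locus are finite, $\overline{\calF}_{DM}$ is a compact complex orbifold. Second, \emph{build the local model}: in a coordinate $w$ transverse to a boundary divisor $\Delta_I$, compute the asymptotics of the Lauricella periods as the cluster collides; for $|I|=2$, say $I=\{i,j\}$, the relevant period ratio behaves like $w^{1-\alpha_i-\alpha_j}$, so the local monodromy about $\Delta_I$ is a complex reflection of order $(1-\alpha_i-\alpha_j)^{-1}$, doubled to $2(1-\alpha_i-\alpha_j)^{-1}$ when $\FS_\alpha$ swaps $i$ and $j$. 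The hypothesis ($\Sigma$INT) is precisely the requirement that these orders be integers (resp.\ half-integers in the symmetric case), i.e.\ that the local monodromy be \emph{finite} of the reflection type. Third, \emph{extend the period map}: granting finiteness, the pullback of the Bergman metric extends across $\overline{\calF}_{DM}$ as an orbifold metric whose cone (and cusp) singularities match those of a ball quotient, and $\Prd_{DM}$ extends to an \'etale orbifold morphism $\overline{\Prd}_{DM}\colon\overline{\calF}_{DM}\to\Gamma_{DM}\bs\overline{\BB}_{DM}$ onto a Baily--Borel-type compactification.

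To finish, note that the extended metric on $\overline{\calF}_{DM}$ is complete of finite volume and $\overline{\Prd}_{DM}$ is a local isometry; a local isometry out of a complete (orbifold) length space is a covering map, and a connected covering of the simply connected ball $\BB_{DM}$ is an isomorphism. Hence the orbifold universal cover of $\overline{\calF}_{DM}$ is $\BB_{DM}$, its orbifold fundamental group — acting by deck transformations — is identified through $\overline{\Prd}_{DM}$ with the monodromy group $\Gamma_{DM}$, and therefore $\Gamma_{DM}$ acts properly discontinuously on $\BB_{DM}$ with finite-covolume quotient $\overline{\calF}_{DM}$; thus $\Gamma_{DM}$ is a lattice in $\PU(1,N-3)$ and $\overline{\Prd}_{DM}$ is an isomorphism. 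Restricting to the open part gives that $\Prd_{DM}\colon\calF_{DM}\to\Gamma_{DM}\bs\BB_{DM}$ is an open embedding.

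I expect the main obstacle to be the second step together with the orbifold bookkeeping around it: producing the adapted transverse coordinates, pinning down the asymptotics of the colliding-cluster periods, and checking that ($\Sigma$INT) genuinely forces the metric completion of $\calF_{DM}$ to be a ball-quotient orbifold with only finitely many cusps rather than something more singular — and, throughout, tracking the $\FS_\alpha$- and $\mathrm{PGL}_2$-quotients appearing in \eqref{definition of GIT model of D-M} carefully enough to identify $\pi_1^{\mathrm{orb}}(\overline{\calF}_{DM})$ with $\Gamma_{DM}$. The higher-codimension strata ($|I|\ge 3$, and clusters of total weight $\ge 1$ giving cusps) need the analogous but more elaborate local analysis.
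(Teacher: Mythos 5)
The paper does not prove this theorem; it is cited directly from Deligne--Mostow and used as a black box, so there is no internal argument to compare yours against. What you have written is a serviceable high-level outline of the metric-completion (Thurston/Looijenga-style) proof, which is a known alternative route to the original hypergeometric-monodromy arguments of Deligne--Mostow and Mostow, and the architecture you describe --- Schwarz developing map is a local biholomorphism, GIT-compactify, compute local monodromy at collision divisors, observe that ($\Sigma$INT) forces it to be a finite complex reflection, extend the metric, invoke completeness --- is the right skeleton.

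That said, the sketch has the weight of the argument sitting entirely in the deferred parts, and two of those deferred parts are not mere ``bookkeeping.'' First, your covering argument (``a local isometry out of a complete orbifold length space is a covering of $\BB_{DM}$'') is clean only when every cluster $I$ with $\sum_{i\in I}\alpha_i<1$ closes up to a cone point; when some pair has $\alpha_i+\alpha_j\ge 1$ the corresponding stratum lies at infinite distance, the metric completion of $\calF_{DM}$ is non-compact, and you cannot conclude finite covolume from the same sentence --- you need a separate cusp analysis (e.g.\ showing the end is a quotient of a Siegel-domain neighbourhood of a boundary point of $\BB_{DM}$ by a parabolic subgroup, with finite volume). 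Second, the reflection-order dictionary in step two needs to be established, not asserted: the exponent $1-\alpha_i-\alpha_j$ in the Lauricella period asymptotics, the identification of the local monodromy as a complex reflection of the stated order, and the doubling under the $\FS_\alpha$-swap each require the actual local computation, and getting the orbifold chart right at strata of codimension $\ge 2$ (where several clusters interact) is where ($\Sigma$INT) is really used and where Mostow's condition is subtler than the pairwise statement suggests. You flag these yourself at the end, which is honest, but as written the proposal is an outline of Looijenga's proof rather than a proof; reconstructing those two steps is essentially reproving Deligne--Mostow from scratch, and for the purposes of this paper one should simply cite the theorem, as the authors do.
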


Mostow \cite{mostow1988discontinuous} found an equivalent condition on the Deligne-Mostow data $\alpha=(\alpha_i)_{i\in [N]}$ for $\Gamma_{DM}$ to be a lattice in $O(T_{C_o}, h_{C_o})$, and gave a complete list of all such $\alpha$.

\subsection{Moduli of $3+6$ Points on $P$}
\label{subseciton: moduli of ordered points}
From now on we consider the case $\alpha=(\frac{1}{3}, \frac{1}{3}, \frac{1}{3}, \frac{1}{6}, \frac{1}{6},\frac{1}{6},\frac{1}{6},\frac{1}{6},\frac{1}{6})$.
There are natural projections $P^k\to \PP \calV_k$ sending a tuple of $k$ points on $P$ to a polynomial $X_0^{6-k} F_k$ such that $F_k$ vanishes on the tuple. 
This induces an isomorphism $p_{\calF}\colon P^9/\FS_{\alpha} = (P^3\times P^6)/\FS_{\alpha}\to \PP\calV_3\times \PP\calV_6$ where $\FS_{\alpha}=\mathfrak{S}_3 \times \mathfrak{S}_6$.
By straightforward calculation, we have 
\begin{equation*}
p_{\calF}^*(\calO_{\PP \calV_3}(2)\boxtimes\calO_{\PP \calV_6}(1) )\cong \calO_{P}(2)^{\boxtimes 3}\boxtimes \calO_{P}(1)^{\boxtimes 6}.
\end{equation*}

Therefore, we have an isomorphism between two GIT-quotients:
\begin{equation}
\label{equation: pF}
p_{\calF}\colon \SL(2, \CC) \dbs (P^9, \calO_{P}(2)^{\boxtimes 3}\boxtimes \calO_{P}(1)^{\boxtimes 6})/\FS_{\alpha} \to 
\SL(2, \CC)\dbs (\PP\calV_3\times \PP\calV_6, \calO_{\PP \calV_3}(2)\boxtimes\calO_{\PP \calV_6}(1))
\end{equation}

The left hand side in the morphism \eqref{equation: pF} is exactly the GIT-quotient in Deligne-Mostow theory, see the discussion in \S \ref{subsection: Review of Deligne-Mostow's Theory}.
From Deligne-Mostow theory, we have an algebraic open embedding $\Prd_{DM}\colon \calF_{DM}\to \Gamma_{DM}\bs \BB_{DM}$ (see Theorem \ref{theorem: deligne-mostow}).
From the isomorphism \eqref{equation: quotient by center}, we know that the right hand side of \eqref{equation: pF} is isomorphic to the GIT-quotient $G\dbs (\PP\calV, \calO(1))$. Recall that we have defined the moduli space $\calF_S=G\dbs \PP \calV^{\circ}$.
The restriction of the morphism $p_{\calF}$ to $\calF_{DM}$ gives rise to an isomorphism
\begin{equation*}
p_{\calF}\colon \calF_{DM}\to \calF_S.
\end{equation*}
At this point, we have the following diagram:
\begin{equation}
\label{partial diagram}
\begin{tikzcd}
\calF_{DM} \arrow{d}{p_{\calF}}\arrow{r}{\Prd_{DM}} & \Gamma_{DM} \bs \BB_{DM} \\
\calF_S
\end{tikzcd}
\end{equation}

In \S \ref{subsection: period map Prd_W}, we will define a period map $\Prd_S\colon \calF_S\to \Gamma_S\bs\BB_S$ and show its injectivity. In Theorem \ref{theorem: main}, Diagram \eqref{partial diagram} will be completed into Diagram \eqref{main diagram}.

\section{An Explicit Study of the Fibration Structure on $W_F$}
\label{section: trivialization}
Let $C$ be the $6$-fold Deligne-Mostow cover of $P$ associated with $\alpha=(\frac{1}{3}, \frac{1}{3}, \frac{1}{3}, \frac{1}{6}, \frac{1}{6}, \frac{1}{6}, \frac{1}{6}, \frac{1}{6}, \frac{1}{6})$.
Let $D$ be the smooth projective curve determined by the affine equation $u^2=v(v^3+1)$. The automorphism group $\Aut(D)$ is isomorphic to $\mu_6$.
We will show that the pullback of $W_F\to P$ to $C$ is birational to the product $C\times D$.
We will construct an action of $\Aut(D)$ on $C$ and show that the K3 surface $W_F$ is birational to the quotient of $C\times D$ by the diagonal action of $\Aut(D)$.

\subsection{Trivialization of the Isotrivial Fibration via Base-change}
\label{subsection: trivialization}

Given a sextic polynomial $F=X_0^3 F_3(X_1,X_2)+F_6(X_1,X_2)$, we have a natural morphism $W_F\to S_F\rightarrow \PP V$ and a fibration $\pi_F\colon W_F\rightarrow P$ as defined in \S \ref{section: Sextics and K3 Surfaces}.
Recall that $P$ can be naturally identified with the set of lines on $\PP V$ passing through $p$.

Define 
\[
P^\circ\coloneqq P-\{[0:1],[1:0] \}-Z(F_3)-Z(F_6), \quad W_F^\circ\coloneqq \pi_F^{-1}(P^{\circ})\cap \AAA_{s,x_1,t}^3.
\]
Here the definition of the indices $s, x_1, t$ can be found in the discussion after Proposition \ref{proposition: fibration}.
Recall that we denote $F_i(1,t)$ by $f_i(t)$.
The closure of $W_F^{\circ}$ in $\AAA_{s,x_1,t}^3$ is defined by
\begin{equation}
\label{equ: W_F}
s^2=x_1^3 f_3(t) + x_1^6 f_6(t),
\end{equation}

From Equation \eqref{equ: W_F}, we see that for any point $t\in P^\circ$, the curve $\pi^{-1}(t)\subset\AAA_{x_1, s}^2$ is determined by the equation
\begin{equation}\label{fiber}
\left(\frac{s}{x_1}\right)^2=x_1(f_3(t) + x_1^3 f_6(t)).
\end{equation}
Since $f_3(t), f_6(t)\ne 0$, the affine curve $\pi^{-1}(t)$ is isomorphic to the affine curve $D^{\circ}\coloneqq Z(u^2-v (v^3+1))\subset \AAA^2_{u,v}$. 

Let $C^{\circ}$ be the preimage of $P^{\circ}$ in the Deligne-Mostow curve $C$.
Then $C^{\circ}$ is determined by 
\begin{equation}
\label{equation: C}
y^6=f_3(t)^2 f_6(t).
\end{equation}
in the affine space $\AAA_{y,t}^2$. 

Put Equations \eqref{fiber} and (\ref{equation: C}) together we obtain
$$
(\frac{s}{x_1})^2 = x_1 f_3(t) + x_1^4 \frac{y^6}{f_3(t)^2},
$$
which can be rewritten as
$$
(\frac{sy}{x_1 f_3(t)})^2 = \frac{x_1 y^2}{f_3(t)} + (\frac{x_1 y^2}{f_3(t)})^4,
$$
and these can be viewed as identities of regular functions on the fiber product $W_F^{\circ}\times_{P^{\circ}}C^{\circ}\subset \AAA_{s,x_1,y,t}^4$.

An element in the fiber product $W_F^{\circ}\times_{P^{\circ}}C^{\circ}$ can be represented by a tuple $(s,x_1,y,t)$. Then the map 
\begin{equation}
\label{map: kappa}
W_F^{\circ}\times_{P^{\circ}}C^{\circ} \to \AAA^2_{u,v}\times \AAA^2_{y,t},\quad (s,x_1,y,t)\mapsto (u,v,y,t)=(\frac{sy}{x_1 f_3(t)}, \frac{x_1 y^2}{f_3(t)}, y, t)
\end{equation}
defines a morphism $\kappa\colon W_F^{\circ}\times_{P^{\circ}}C^{\circ}\to D^{\circ}\times C^{\circ}$.

\begin{prop}
\label{proposition: quotient -product sturcture}
The morphism $\kappa$ is an isomorphism. In particular it defines a birational map $W_F\times_P C\dashrightarrow D\times C$.
\end{prop}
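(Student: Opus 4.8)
The plan is to prove the map $\kappa$ is an isomorphism by exhibiting an explicit inverse, and then deduce the birationality statement. First I would note that both source and target are smooth affine surfaces (the source because $W_F^\circ$ is smooth over $P^\circ$ by the computation following Proposition~\ref{proposition: fibration} and $C^\circ$ is smooth, the target being a product of smooth affine curves), so it suffices to construct a morphism in the opposite direction and check it is inverse to $\kappa$ on coordinate rings. Concretely, given $(u,v,y,t)\in D^\circ\times C^\circ$, I would recover $(s,x_1,y,t)$ by setting
\begin{equation*}
x_1=\frac{v f_3(t)}{y^2},\qquad s=\frac{u x_1 f_3(t)}{y}=\frac{u v f_3(t)^2}{y^3},
\end{equation*}
which makes sense because on $C^\circ$ we have $y^6=f_3(t)^2 f_6(t)\ne 0$ and $f_3(t)\ne 0$, so $y\ne 0$. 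One then checks that this tuple satisfies Equation~\eqref{equ: W_F} and that $t$ maps to the correct point, using the relation $u^2=v(v^3+1)$ together with $y^6=f_3(t)^2 f_6(t)$; this is the routine verification I would not spell out in full, but it is exactly the reverse of the algebraic manipulations already displayed before the statement of the proposition.

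The key steps, in order, are: (1) observe that $\kappa$ is a morphism of smooth affine varieties with the same dimension ($2$); (2) write down the candidate inverse $\lambda\colon D^\circ\times C^\circ\to W_F^\circ\times_{P^\circ}C^\circ$ by the formulas above, checking it lands in the fiber product (i.e. the defining equation \eqref{equ: W_F} holds and the two maps to $P^\circ$ agree); (3) verify $\lambda\circ\kappa=\id$ and $\kappa\circ\lambda=\id$ by direct substitution into the coordinate expressions; (4) conclude $\kappa$ is an isomorphism. For the final sentence, since $W_F^\circ$ is a dense open subset of $W_F\times_P C$ — wait, more precisely $W_F^\circ\times_{P^\circ}C^\circ$ is a dense open subset of $W_F\times_P C$ and $D^\circ\times C^\circ$ is a dense open subset of $D\times C$ — the isomorphism $\kappa$ between these opens is by definition a birational map $W_F\times_P C\dashrightarrow D\times C$.

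I expect the main obstacle to be bookkeeping rather than anything conceptual: one must be careful about which loci have been removed (the points $[0:1],[1:0]$ and the zero loci $Z(F_3), Z(F_6)$ in $P$) to ensure all the denominators $y$, $f_3(t)$, $x_1$ appearing in $\kappa$ and in $\lambda$ are invertible on the relevant open sets, so that both maps are genuinely defined everywhere on their domains. A secondary point to be careful about is that $W_F^\circ$ was defined as $\pi_F^{-1}(P^\circ)\cap\AAA^3_{s,x_1,t}$, i.e. we are working in the chart $\{X_0\ne 0, X_1\ne 0\}$; one should confirm that over $P^\circ$ the fiber of $\pi_F$ is entirely contained in this affine chart (equivalently, that none of the three non-$p$ branch points of $L_a\cap Z(F)$ lies on $\{X_1=0\}$ for $a\in P^\circ$), which follows from having removed the point $[0:1]$ from $P$. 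Once these domain-of-definition issues are settled, the verification that the two composites are the identity is a short symbolic computation using only $u^2=v(v^3+1)$ and $y^6=f_3(t)^2 f_6(t)$.
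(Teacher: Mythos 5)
Your proposal is correct and the core content---the explicit formulas $x_1=vf_3(t)/y^2$ and $s=uvf_3(t)^2/y^3$ and the verification that they satisfy Equation~\eqref{equ: W_F} using $u^2=v(v^3+1)$ and $y^6=f_3(t)^2 f_6(t)$---coincides exactly with what the paper does. The only difference is the final packaging: the paper observes that $D\times C$ (hence the open subset $D^\circ\times C^\circ$) is normal and appeals to the fact that a bijective morphism onto a normal variety is an isomorphism, so it only has to check injectivity and surjectivity of $\kappa$; you instead take the surjectivity formula, promote it to an honest inverse morphism $\lambda$ (checking the denominators are units because $y\ne 0$ and $f_3(t)\ne 0$ on $C^\circ$), and verify $\lambda\circ\kappa=\id$ and $\kappa\circ\lambda=\id$ directly. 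Your route is slightly more elementary since it avoids Zariski's main theorem, at the cost of spelling out both composite verifications; it also renders your opening remark about smoothness and equal dimension unnecessary, since once a two-sided inverse morphism exists nothing else is needed. The domain-of-definition bookkeeping you flag (removing $[0:1],[1:0]$ and $Z(F_3F_6)$ from $P$, and confirming the fiber lies in the chart $\{X_0\ne 0, X_1\ne 0\}$) is exactly the right thing to keep track of and is consistent with the paper's setup.
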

\begin{proof}
%\emph{Injectivity:}
Since $D\times C$ is smooth (hence normal), it suffices to show that $\kappa$ is a bijection.
For any $(s,x_1,y,t)$ and $(s',x'_1,y',t')$ in $W_F^{\circ}\times_{P^{\circ}}C^{\circ}$, if $\kappa(s,x_1,y,t)=\kappa(s',x'_1,y',t')$, then we immediately have $y=y'$ and $t=t'$.
From $\frac{x_1 y^2}{f_3(t)}=\frac{x'_1 y^2}{f_3(t)}$ we have $x_1=x'_1$, and then from $\frac{sy}{x_1 f_3(t)}=\frac{s'y}{x_1 f_3(t)}$ we have $s=s'$.
This proves the injectivity of $\kappa$.

%\emph{Surjectivity:}
The product $D^\circ\times C^\circ$ in $\AAA^2_{u,v}\times \AAA^2_{y,t}$ is cut out by $u^2=v(v^3+1)$ and $y^6=f_3(t)^2 f_6(t)$.
For any $(u,v,y,t)$ satisfies the above relations, we have $\kappa(\frac{uvf_3(t)^2}{y^3}, \frac{vf_3(t)}{y^2}, y, t)=(u,v,y,t)$.
This proves the surjectivity of $\kappa$. 
\end{proof}

\begin{rmk}\label{remark: geometric char of $C$}
We give a more geometric characterization of $C$. 
Each smooth fiber $\pi_F^{-1}(t)$ of the elliptic fibration $\pi_F\colon W_F\to P$ is isomorphic to $D$ (in $6$ ways). 
Such isomorphisms make up an $\Aut(D)$-cover of $P-\mathrm{disc}(\pi_F)$ and this extends to a normal $\Aut(D)$-cover $C'$ of $P$. 
By Proposition \ref{proposition: quotient -product sturcture}, we have a birational map $W_F\times_P C\dashrightarrow D\times C$. 
Therefore, a point in $C$ with image in $p\in P-\disc(\pi_F)$ gives rise to an isomorphism between the corresponding fiber of $\pi_F$ with $D$. 
We thus obtain a rational map $C\dashrightarrow C'$. 
From \eqref{map: kappa}, we see that two different points in $C$ with the same image $p\in P-\mathrm{disc}(\pi_F)$ define different isomorphisms from $\pi_F^{-1}(p)$ to $D$. 
Thus the rational map $C\dashrightarrow C'$ is injective outside the indeterminacy locus, which extends to an isomorphism.

Moreover, there is a rational fibration $C\times_{l_d}D\dashrightarrow D/\mu_6\cong \PP^1$ with generic fiber isomorphic to $C$.
We remark that there are generically injective morphisms from $C$ to $W_F$.
We take a point in $D$ with trivial $\mu_6$-stabilizer and consider its $\mu_6$-orbit.
For any $p\in P-\disc(\pi_F)$, there are $6$ isomorphisms between $D$ and $\pi_F^{-1}(p)$. They send $\mu_6\cdot p\subset D$ to the same $\mu_6$-orbit in the fiber $\pi_F^{-1}(p)$.
These $\mu_6$-orbits form a $6$-fold cover of $P-\disc(\pi_F)$.
Its closure in $W_F$ is a singular curve with normalization isomorphic to $C$.
\end{rmk}

\subsection{A birational identification of $W_F$}
\label{subsection: birational}
%\edit{\sout{Next we show that $W_F$ is birational to $C\times_{l_d} D$ and study the $\mu_6$-action on it.}}
We define the $\mu_6$-actions on $C$ and $D$ by
\begin{equation}\label{action on C and D}
	\zeta_6\cdot\colon C\rightarrow C,\,\, (y,t)\mapsto (\zeta_6 y,t),
	\quad \zeta_6\cdot\colon D\rightarrow D,\,\, (u,v)\mapsto (\zeta_6 u,\zeta_3 v).
\end{equation}
Denote by $l_d$ the corresponding diagonal action of $\mu_6$ on $C\times D$, namely,
\begin{equation}\label{action: diagonal}
	l_d\colon \mu_6\times (C\times D)\to C\times D,\quad (\zeta_6, (y,t,u,v))\mapsto (\zeta_6 y,t,\zeta_6 u,\zeta_3 v).
\end{equation}
Denote by $C\times_{l_d} D$ the quotient of $C\times D$ by $l_d$.

From Proposition \ref{proposition: quotient -product sturcture}, we have the projection 
\begin{equation*}
	\widetilde{\psi}\colon C^{\circ}\times D^{\circ}\rightarrow W_F^{\circ},\quad (y,t,u,v)\mapsto (\frac{uvf_3(t)^2}{y^3}, \frac{vf_3(t)}{y^2},t).
\end{equation*}
For each $(y,t,u,v)\in C^{\circ}\times D^{\circ}$, we have
\begin{equation*}
	\widetilde{\psi}(l_d(\zeta_6, (y,t,u,v))) = \widetilde{\psi}(\zeta_6 y,t,\zeta_6 u,\zeta_3 v) = \widetilde{\psi}(y,t,u,v).
\end{equation*}
Hence the diagonal action $l_d$ on $C\times D$ coincides with the Deck transformations of $\widetilde{\psi}\colon C^{\circ}\times D^{\circ}\rightarrow W_F^{\circ}$, hence
\begin{prop}
\label{proposition: birational}
The map $\widetilde{\psi}$ factors through a birational map from $C\times_{l_d} D$ to $W_F$, which we denote by $\psi$.  
\end{prop}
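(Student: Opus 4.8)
The plan is to exhibit $\widetilde{\psi}$ as the quotient map for the action $l_d$. First I would unwind the definitions: comparing the formula for $\widetilde{\psi}$ with that of $\kappa$ in Proposition \ref{proposition: quotient -product sturcture}, one sees that $\widetilde{\psi}$ is the composition of the coordinate swap $C^{\circ}\times D^{\circ}\to D^{\circ}\times C^{\circ}$, the inverse isomorphism $\kappa^{-1}\colon D^{\circ}\times C^{\circ}\to W_F^{\circ}\times_{P^{\circ}}C^{\circ}$, and the projection $\mathrm{pr}\colon W_F^{\circ}\times_{P^{\circ}}C^{\circ}\to W_F^{\circ}$. The projection $\mathrm{pr}$ is the base change along $\pi_F$ of the morphism $C^{\circ}\to P^{\circ}$; by Equation \eqref{equation: C} the latter is the Kummer cover $y^6=f_3(t)^2f_6(t)$ over the locus $P^{\circ}$ where $f_3f_6$ is invertible, hence a connected finite \'etale $\mu_6$-cover. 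Therefore $\widetilde{\psi}$ is a connected finite \'etale cover of degree $6$, and in particular is surjective with all fibers of cardinality $6$.

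Next I would verify that $l_d$ acts freely on $C^{\circ}\times D^{\circ}$. Along $C^{\circ}$ the relation $y^6=f_3(t)^2f_6(t)$ with $f_3(t)f_6(t)\neq 0$ forces $y\neq 0$, so already the generator $\zeta_6$, acting by $y\mapsto\zeta_6 y$, has no fixed point on $C^{\circ}$; hence the diagonal action $l_d$ is free and every orbit has exactly $6$ elements. The computation displayed just before the statement, $\widetilde{\psi}\circ l_d(\zeta_6,\,\cdot\,)=\widetilde{\psi}$, shows every $l_d$-orbit is contained in a fiber of $\widetilde{\psi}$; comparing cardinalities ($6=\deg\widetilde{\psi}$), each fiber is exactly one $l_d$-orbit. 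It follows that $\widetilde{\psi}$ descends to a bijective morphism $(C^{\circ}\times D^{\circ})/l_d\to W_F^{\circ}$; since the source is smooth ($l_d$ being free) and the map is a bijective quotient of a finite \'etale cover by a free fiberwise action, it is an isomorphism (equivalently, $\widetilde{\psi}$ is a $\mu_6$-torsor whose structure group is generated by $l_d$).

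Finally I would compactify. The open subset $C^{\circ}\times D^{\circ}$ is dense and $l_d$-stable in $C\times D$, so its quotient is a dense open of $C\times_{l_d}D$; likewise $W_F^{\circ}$, being the intersection of the affine chart $\{X_0\neq 0,\ X_1\neq 0\}$ with the preimage of the dense open $P^{\circ}\subset P$, is dense in the irreducible surface $W_F$. Hence the isomorphism above yields the asserted birational map $\psi\colon C\times_{l_d}D\dashrightarrow W_F$, through which $\widetilde{\psi}$ factors after composing with the quotient $C\times D\dashrightarrow C\times_{l_d}D$. I expect the only delicate point to be the degree count --- that $\mathrm{pr}$, and hence $\widetilde{\psi}$, is finite \'etale of degree exactly $6$ --- which is where one genuinely uses that $C^{\circ}\to P^{\circ}$ is an unramified $\mu_6$-Kummer cover over $P^{\circ}$; the rest is formal.
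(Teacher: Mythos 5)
Your argument is correct and is essentially the paper's approach: check that $\widetilde{\psi}$ is $l_d$-invariant and that the $l_d$-orbits exhaust the fibers, so that $\widetilde{\psi}$ descends to an isomorphism $(C^\circ\times D^\circ)/l_d\cong W_F^\circ$ on dense opens. The one point the paper glosses over --- that invariance alone only gives $l_d$ \emph{inside} the deck group, and one still needs equality --- you justify carefully by exhibiting $\widetilde{\psi}$ as a connected degree-$6$ \'etale cover (via $\kappa^{-1}$ and base change of the Kummer cover $C^\circ\to P^\circ$) and noting $l_d$ is free since $y\neq 0$ on $C^\circ$, so a cardinality count identifies orbits with fibers; this fills in the detail the paper leaves implicit.
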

We define a $\mu_6$-action on $W_F$ by
\begin{equation}\label{action: fiberwise}
\zeta_6\cdot\colon W_F\rightarrow W_F, \quad (s,x_1,t) \mapsto (-s,\zeta_3 x_1, t),
\end{equation}	
and define a $\mu_6$-action on $C\times D$ by 
\begin{equation}
\label{action: sigma'_F}
\zeta_6\cdot \colon C\times D\rightarrow C\times D, \quad (y,t,u,v) \mapsto (y,t,\zeta_6 u,\zeta_3 v).
\end{equation}
This action descents to a $\mu_6$-action on $C\times_{l_d} D$:
\begin{equation*}
\zeta_6\cdot \colon C\times_{l_d} D \rightarrow C\times_{l_d} D, \quad  [(y,t,u,v)] \mapsto [(y,t,\zeta_6 u,\zeta_3 v)].
\end{equation*}

\begin{prop}
	\label{proposition: sigma_F is fiberwise}
	The actions of $\mu_6$ on both sides of the birational map $\psi\colon C\times_{l_d}D\dashrightarrow W_F$ are compatible. Moreover, the action of $\mu_6$ on $W_F$ preserves every fiber of $W_F\to P$.
\end{prop}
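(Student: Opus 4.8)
The plan is to verify both assertions on the dense open locus $W_F^{\circ}$, where the birational map $\psi$ is represented by the explicit morphism $\widetilde\psi$ of \eqref{map: kappa}, and then to transport the conclusions to the projective K3 surface $W_F$.

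For the compatibility statement I would first check that $\widetilde\psi\colon C^{\circ}\times D^{\circ}\to W_F^{\circ}$ intertwines the $\mu_6$-action \eqref{action: sigma'_F} on the source with the $\mu_6$-action \eqref{action: fiberwise} on the target. Since $\mu_6=\langle\zeta_6\rangle$, it suffices to compare $\widetilde\psi(y,t,\zeta_6 u,\zeta_3 v)$ with the image of $\widetilde\psi(y,t,u,v)=\bigl(\tfrac{uvf_3(t)^2}{y^3},\tfrac{vf_3(t)}{y^2},t\bigr)$ under $(s,x_1,t)\mapsto(-s,\zeta_3 x_1,t)$; the whole computation collapses to the single identity $\zeta_6\zeta_3=\zeta_6^{3}=-1$, after which the two expressions coincide, so this step is just a one-line substitution. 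To promote this to a statement about $\psi$, recall that \eqref{action: sigma'_F} commutes with the diagonal action $l_d$ of \eqref{action: diagonal}, hence descends to $C\times_{l_d}D$, and that $\psi$ is by definition the birational map induced by $\widetilde\psi$ (Proposition \ref{proposition: birational}); $\mu_6$-equivariance of $\widetilde\psi$ on the dense open set $W_F^{\circ}$ then upgrades to $\mu_6$-equivariance of $\psi$ as a birational map.

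For the second assertion I would observe that \eqref{action: fiberwise} fixes the coordinate $t$, whereas on $W_F^{\circ}$ the fibration $\pi_F$ is the projection $(s,x_1,t)\mapsto t$. Hence, for each $g\in\mu_6$, the morphisms $\pi_F\circ g$ and $\pi_F\colon W_F\to P$ agree on the dense open $W_F^{\circ}$, so they agree on all of $W_F$, and $g$ carries each fiber $\pi_F^{-1}(a)$ onto itself. (This is consistent with the first part, since \eqref{action: sigma'_F} fixes the $C$-coordinates $(y,t)$ and, under $\psi$, the fibration $W_F\to P$ is induced by the projection $C\times D\to C$ followed by the covering $C\to P$.) The one point that genuinely needs care, rather than bookkeeping, is the step implicit in both assertions: \eqref{action: fiberwise} is a priori only an automorphism of the affine surface $W_F^{\circ}$ --- it does at least preserve the equation \eqref{equ: W_F} --- and before invoking ``the action of $\mu_6$ on $W_F$'' one must know it extends to a biregular automorphism of $W_F$. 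I expect this to be the main obstacle, and I would resolve it by minimality of K3 surfaces: $W_F$ has no $(-1)$-curve, so the birational self-map of $W_F$ determined by \eqref{action: fiberwise} on $W_F^{\circ}$ is an isomorphism, hence extends uniquely to an element of $\Aut(W_F)$, after which the density argument above applies verbatim.
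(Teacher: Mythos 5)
Your proof is correct and follows the paper's argument: the equivariance of $\widetilde\psi$ reduces to the same substitution using $\zeta_6\zeta_3=\zeta_6^3=-1$, and the fiberwise claim is the observation that the coordinate $t$ is fixed. The extension concern you flag at the end is moot in context: the $\mu_6$-action on $W_F$ is already a globally defined automorphism, namely the lift to the minimal resolution of the linear $\mu_3$-action on $\PP V$ (which preserves $Z(F)$ and the $D_4$ point) combined with the covering involution of $S_F\to\PP V$, and \eqref{action: fiberwise} is merely its expression in the chosen affine chart. That said, your minimality argument is a valid alternative justification, so the proof goes through either way.
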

\begin{proof}
    We need to verify that
    \begin{equation}
    \label{equation: sigma}
    	\psi(\zeta_6([(y,t,u,v)])) = \zeta_6(\psi([(y,t,u,v)]))
    \end{equation}
    holds for all $[(y,t,u,v)]\in C^{\circ}\times D^{\circ}$.
    We have 
\begin{equation*}
\psi(\zeta_6([(y,t,u,v)])) = \psi([(y,t,,\zeta_6 u,\zeta_3 v)]) = (-\frac{uvf_3(t)^2}{y^3}, \zeta_3\frac{vf_3(t)}{y^2},t)
\end{equation*} 
and 
\begin{equation*}
\zeta_6(\psi([(y,t,u,v)])) = \zeta_6((\frac{uvf_3(t)^2}{y^3}, \frac{vf_3(t)}{y^2},t)) = (-\frac{uvf_3(t)^2}{y^3}, \zeta_3\frac{vf_3(t)}{y^2},t),
\end{equation*}
hence the Equation \eqref{equation: sigma} holds.

	The fibration $C\times_{l_d} D\rightarrow C/\mu_6\cong P$ is induced by the projection $C\times D\rightarrow C$ after taking quotient by the $\mu_6$-actions.
	Hence $[(y,t,u,v)]$ and $[(y,t,\zeta_6 u,\zeta_3 v)]$ have the same image in $P$. Thus the action of $\mu_6$ on $W_F$ preserves every fiber.
\end{proof}

\section{Period Map for $W_F$}
\label{section: period map by K3}
In this section we characterize the period domain and the period map for the $K3$ surfaces $W_F$ and prove the injectivity of the period map. Combining this with a dimension counting we show that the period map is an open embedding into an arithmetic ball quotient of dimension $6$.

\subsection{A Natural Lattice associated with $W_F$}
%We first introduce a lattice naturally associated with the $K3$ surfaces $W_F$. This lattice will later be proved to be the Picard lattice for a generic $W_F$.
Shioda \cite[Theorem $1.1$]{shioda1972elliptic} calculated the N\'eron-Severi group for any elliptic surface with a section using the geometric information of the singular fibers and sections. 
In our case, for $F\in \calV^{\circ}$ (see \S \ref{subseciton: GIT moduli of type I}), the $K3$ surface $W_F$ has an elliptic fibration (with a natural section) over $P$.
This fibration has nine singular fibers, with three of type \uppercase\expandafter{\romannumeral4} and other six of type \uppercase\expandafter{\romannumeral2} (see Proposition \ref{proposition: sigular fiber type}). 
Let $P_F$ be the subgroup of the Picard group $\Pic(W_F)$  generated by the section, a smooth fiber and the irreducible components of the three fibers of type \uppercase\expandafter{\romannumeral4} which do not intersect with the section. 
Let $Q_F$ be the orthogonal complement of $P_F$ in $H^2(W_F, \ZZ)$. 
The isomorphism type of the pair $(P_F, Q_F)$ does not depend on the choice of $F$ in $\calV^{\circ}$. 

\begin{prop}
The inclusion $P_F\subset \Pic(W_F)$ is primitive. 
\end{prop}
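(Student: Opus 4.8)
The plan is to recognize $P_F$ as the trivial lattice $\mathrm{Triv}(\pi_F)$ of the elliptic fibration $\pi_F\colon W_F\to P$ and then invoke Shioda's description of the Néron--Severi group of an elliptic surface with section. First I would observe that, by Proposition \ref{proposition: sigular fiber type}, the only reducible fibers of $\pi_F$ are the three of Kodaira type IV, the type II fibers being irreducible cuspidal cubics; that the zero section $\widehat{E}_p$ meets each type IV fiber in exactly one of its three components (the triple point is the unique singular point of such a fiber, and $\widehat{E}_p$ passes through a smooth point); and that the two remaining components of each type IV fiber span a copy of $A_2(-1)$. Hence $P_F$, being generated by $\widehat{E}_p$, a general fiber, and these non-identity fiber components, is precisely $\mathrm{Triv}(\pi_F)\subseteq\Pic(W_F)$, a lattice of signature $(1,7)$ isomorphic to $U\oplus A_2(-1)^3$, with $|A_{P_F}|=27$.

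By Shioda's theorem, $\Pic(W_F)/\mathrm{Triv}(\pi_F)\cong\MW(\pi_F)$, so the failure of $P_F$ to be primitive is measured exactly by $\MW(\pi_F)_{\mathrm{tors}}$. Two bounds confine this group: the specialization homomorphism embeds it into the product of the fiberwise component groups, which here is $(\ZZ/3\ZZ)^3$ since the type II fibers have trivial component group; and independently $[\,P_F^{\mathrm{sat}}:P_F\,]^2$ divides $|A_{P_F}|=27$, again forcing the index to be $1$ or $3$. So the whole statement reduces to showing that $\pi_F$ admits no non-trivial $3$-torsion section.

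For this I would use the Weierstrass model from Remark \ref{remark: weierstrass fibration}, namely $y^2=x^3+b(t)$ with $\deg b=12$, where $b$ has a double zero at each of the three points of $Z(F_3)$ (the type IV fibers) and a simple zero at each of the six points of $Z(F_6)$ (the type II fibers). The $3$-division polynomial of the generic fiber $E\colon y^2=x^3+b$ over $\CC(t)$ is $\psi_3(x)=3x(x^3+4b)$, so every non-trivial $3$-torsion point of $E$ has $x$-coordinate either $0$, giving $(0,\pm\sqrt{b})$, or a cube root of $-4b$; a $\CC(t)$-rational such point would therefore force $b\in\CC(t)^{\times 2}$ or $-4b\in\CC(t)^{\times 3}$. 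Neither is possible, because $b$ has a zero of order $1$ (not even) and a zero of order $2$ (not divisible by $3$). Hence $\MW(\pi_F)_{\mathrm{tors}}=0$, i.e.\ $P_F$ is primitive in $\Pic(W_F)$.

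The only genuinely delicate point I expect is the first step — checking that $P_F$ really is the \emph{full} trivial lattice, which needs the precise facts that no type II fiber degenerates further and that $\widehat{E}_p$ hits each type IV fiber in a single component; both are read off from the fiber analysis of \S\ref{section: Sextics and K3 Surfaces}. Once the Weierstrass normal form of Remark \ref{remark: weierstrass fibration} is in hand, the remaining steps are routine.
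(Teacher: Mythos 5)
Your proof is correct, but it is considerably more elaborate than the paper's, and the paper's shortcut is worth spelling out. The paper invokes the stronger version of the specialization statement: by Shioda's Remark~1.10, $\MW(W_F)_{\mathrm{tors}}$ injects into the \emph{smooth part} of every fiber (not merely its component group). For a type~II fiber the smooth part is the additive group $\mathbb{G}_a\cong\CC$, which has no nonzero torsion, so $\MW(W_F)_{\mathrm{tors}}=0$ follows from a single type~II fiber, with no need for the discriminant bound or the $3$-division polynomial. Your version replaces this with (i) an embedding of $\MW_{\mathrm{tors}}$ into the product of component groups (this is not the most naive statement of specialization; it requires knowing that the narrow Mordell--Weil group is torsion-free, which in turn is usually proved from the very fact about smooth parts that the paper uses directly), (ii) the discriminant bound $n^2\mid 27$, and (iii) the explicit computation that $b\notin\CC(t)^{\times 2}$ and $-4b\notin\CC(t)^{\times 3}$ from the orders of vanishing of $b$. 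Steps (ii) and (iii) are correct and give an independent route to the result, and the Weierstrass-model computation has its own merit (it makes the constraint visibly depend on the distribution of zero orders of $b$), but the whole of the second and third paragraphs becomes unnecessary once you use the fiber's full smooth locus rather than just its group of components. In short: right answer, genuinely different and valid route, but the paper's argument is a one-liner once the correct form of the specialization lemma is cited.
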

\begin{proof}
By Shioda \cite[Theorem $1.1$]{shioda1972elliptic}, the quotient $\Pic(W_F)/P_F$ is isomorphic to the Mordell-Weil group $\MW(W_F)$. 
The torsion part $\MW(W_F)_{tor}$ is a subgroup of the smooth part of every fiber, see \cite[Remark $1.10$]{shioda1972elliptic}. 
Since the fibration on $W_F$ has singular fibers of type \uppercase\expandafter{\romannumeral2}, we conclude that $\MW(W_F)$ is torsion-free. Thus $P_F$ is primitive in $\Pic(W_F)$.
\end{proof}

We characterize $(P_F, Q_F)$ in the next proposition.

\begin{prop}
\label{proposition: shioda}
For any $F\in \calV^{\circ}$, we have $P_F\cong U\oplus A_2(-1)^3$ and $Q_F \cong A_2\oplus E_6(-1)^2$. 
Here $U$ represents for the hyperbolic lattice of rank two, and $A_n, D_n, E_n$ represent for the positive definite root lattices associated to the correponding Dykin diagrams. 
\end{prop}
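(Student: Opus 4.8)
The plan is to compute the lattice $P_F$ directly from the geometry of the elliptic fibration $\pi_F \colon W_F \to P$ using the formula of Shioda \cite[Theorem~1.1]{shioda1972elliptic}, and then deduce the structure of $Q_F$ by a lattice-theoretic argument based on primitivity and unimodularity of $H^2(W_F,\ZZ) \cong L_{K3}$. First I would identify $P_F$ as the sublattice generated by the zero section $O$, a general fiber $\mathfrak{f}$, and, for each of the three type IV fibers, the two components not meeting $O$. The section and fiber span a copy of $U$ (since $O^2 = -2$, $\mathfrak{f}^2 = 0$, $O\cdot\mathfrak{f} = 1$, a standard change of basis gives $U$). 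A type IV fiber is a cycle of three $(-2)$-curves meeting pairwise at a single common point; the two components disjoint from $O$ generate a negative definite root lattice, and a direct intersection computation (each has self-intersection $-2$, they meet each other with multiplicity $1$) shows this is $A_2(-1)$. Since the three type IV fibers lie over distinct points of $P$ their contributions are mutually orthogonal and orthogonal to the $U$ summand, giving $P_F \cong U \oplus A_2(-1)^3$.

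Next I would pin down $Q_F = P_F^\perp$ in $H^2(W_F,\ZZ) \cong L_{K3} = U^3 \oplus E_8(-1)^2$. Since $P_F$ contains $U$ as a direct summand and $U$ is unimodular, one may split off $U^\perp \cong U^2 \oplus E_8(-1)^2$ and reduce to computing the orthogonal complement of $A_2(-1)^3$ inside $U^2 \oplus E_8(-1)^2$. Now $A_2(-1)^3$ has rank $6$, signature $(0,6)$, and discriminant group $(\ZZ/3)^3$; so $Q_F$ has rank $16$, signature $(2,14)$, and, by the standard fact that for a primitive embedding $P_F \hookrightarrow L_{K3}$ one has $A_{Q_F} \cong A_{P_F}$ with the opposite discriminant form (Nikulin), $A_{Q_F} \cong (\ZZ/3)^3$ with the appropriate quadratic form. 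The lattice $A_2 \oplus E_6(-1)^2$ indeed has signature $(2,14)$, rank $16$, and discriminant group $(\ZZ/3)^3$ (since $A_{A_2} \cong \ZZ/3$ and $A_{E_6} \cong \ZZ/3$), with matching discriminant form. By Nikulin's uniqueness theorem for indefinite lattices of this rank and signature (rank $\geq \ell(A_{Q_F}) + 2$ is amply satisfied), $Q_F$ is determined by its signature and discriminant form, hence $Q_F \cong A_2 \oplus E_6(-1)^2$.

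The main obstacle I anticipate is the primitivity bookkeeping and the precise matching of discriminant forms. Primitivity of $P_F$ in $\Pic(W_F)$ is already established in the preceding proposition via the Mordell–Weil group being torsion-free; but one also needs $P_F$ to sit primitively in all of $H^2(W_F,\ZZ)$, which follows since $\Pic(W_F)$ is primitive in $H^2(W_F,\ZZ)$ for a projective K3 surface — the saturation of $P_F$ in $H^2$ is contained in $\Pic$ and in the saturation of $P_F$ in $\Pic$, which is $P_F$ itself. The other delicate point is checking that the discriminant form of $A_2(-1)^3$ really is the negative of that of $A_2 \oplus E_6(-1)^2$; this amounts to recalling that the discriminant form of $E_6$ equals that of $A_2$ up to sign (both generators have square $\tfrac{4}{3} \equiv -\tfrac{2}{3} \bmod 2\ZZ$, so $q_{E_6} \cong q_{A_2(-1)}$ as finite quadratic forms on $\ZZ/3$), and then assembling the three copies. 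Once these are in hand, Nikulin's existence-and-uniqueness machinery finishes the argument with no further computation.
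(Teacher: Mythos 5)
Your proof is correct and follows essentially the same approach as the paper: read off $P_F \cong U \oplus A_2(-1)^3$ from the fibration data, then use Nikulin's lattice theory to pin down the orthogonal complement $Q_F$. The only minor variation is that the paper argues via uniqueness of the primitive embedding $P_F \hookrightarrow L_{K3}$ (Nikulin, Theorem 1.14.4), whereas you compute the signature and discriminant form of $Q_F$ directly and invoke Nikulin's uniqueness theorem for indefinite even lattices --- the two routes are interchangeable here.
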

\begin{proof}
The first isomorphism $P_F\cong U\oplus A_2(-1)^3$ directly follows from its definition.
Since the discriminant forms of $U\oplus A_2(-1)^3$ and $A_2\oplus E_6(-1)^2$ are inverse to each other, there exists primitive embedding $U\oplus A_2(-1)^3\hookrightarrow \Lambda_{K3}=U^3\oplus E_8(-1)^3$ with the orthogonal complement isomorphic to $A_2\oplus E_6(-1)^2$.
	By Nikulin \cite[Theorem 1.14.4]{nikulin1979integer}, a primitive embedding of $P_F$ into the $K3$ lattice $\Lambda_{K3}$ is unique up to automorphisms of $\Lambda_{K3}$. This implies that $Q_F\cong A_2\oplus E_6(-1)^2$. 
\end{proof}

\begin{rmk} 
For generic $F$, the lattice $P_F$ is actually the Picard lattice. This will be clear after we show the injectivity of the period map. See Corollary \ref{proposition: generic Pic and Tr}.
\end{rmk}

\subsection{The Period Map for $W_F$}
\label{subsection: period map Prd_W}
Recall that we have defined an action (see \eqref{action: fiberwise}) of $\mu_6$ on the $K3$ surface $W_F$. 
It is clear that the induced action of $\mu_3\subset \mu_6$ on $H^2(W_F, \ZZ)$ fixes the classes of the section and the irreducible components of the three singular fibers of type \uppercase\expandafter{\romannumeral4} in $W_F\to P$. 
We denote by $H^2(W_F, \ZZ)^{\mu_3}$ the $\mu_3$-invariant sublattice of $H^2(W_F, \ZZ)$.
It then follows from the definition of $P_F$ that $P_F$ is contained in $H^2(W_F, \ZZ)^{\mu_3}$.
In particular, $\mu_3$ preserves $Q_F$.

Since the quotient $W_F/\mu_3$ is a rational surface, the $\mu_3$-action on $W_F$ is non-symplectic (it is well-known that the quotient of a $K3$ surface by a finite symplectic automorphism is a $K3$ surface with singularities). 
Therefore, the invariant sublattice $H^2(W_F, \ZZ)^{\mu_3}$ is orthogonal to $H^{2,0}$. 
This implies that $H^2(W_F, \ZZ)^{\mu_3}$ is a primitive sublattice of the Picard lattice $\Pic(W_F)$.
We now have the inclusions:
\begin{equation}
\label{equation: PF}
P_F\subset H^2(W_F, \ZZ)^{\mu_3}\subset \Pic(W_F).
\end{equation}

\begin{defn}
Let $T_F$ be the $\mu_3$-characteristic subspace of $H^2(W_F, \QQ(\zeta_3))$ such that $(T_F)_{\CC}\supset H^{2,0}(W_F)$. 
\end{defn}
Let $\epsilon$ be the intersection form on $H^2(W_F, \ZZ)$. 
We have a Hermitian form
\begin{equation*}
h_{\epsilon}\colon T_F\times T_F\to \CC, h_{\epsilon}(x,y)=\epsilon(x, \overline{y})
\end{equation*}
which has signature $(1, *)$.
Now we fix arbitrarily an element $F_o\in \calV^{\circ}$ as our base point. 
Let $\BB_S=\BB(T_{F_o})$ be the complex hyperbolic ball associated with $(T_{F_o}, h_{F_o})$. Define 
\begin{equation*}
\Gamma_S\coloneqq \{g\in O(Q_F) \big{|} g\circ\zeta_3=\zeta_3\circ g\},
\end{equation*}
where the element $\zeta_3\in \mu_3$ acts on $Q_F$. 

Next we define a period map $\Prd_S\colon \calF_S \to \Gamma_S\bs\BB_S$. 
For $F\in \calV^{\circ}$, take a path $\gamma$ in $\calV^{\circ}$ from $F_o$ to $F$.
It induces an isomorphism $\gamma^*\colon T_F\to T_{F_o}$.
The line $\gamma^*(T^{2,0}_F)$ represents a point in $\BB_S$. 
We define $\Prd_S(F)$ to be $[\gamma^*(T^{2,0}_F)]\in \Gamma_S \bs \BB_S$.
This is well-defined since for two choices of $\gamma$, the corresponding points in $\BB_S$ are in one orbit of $\Gamma_S$.

Therefore, we have an analytic morphism $\Prd_S\colon \PP\calV^{\circ}\to \Gamma_S\bs\BB_S$, which is constant on every $G_S$-orbit.
Thus $\Prd_S$ descends to 
\begin{equation}
\label{equation: period for sextic}
\Prd_S\colon \calF_S\to \Gamma_S\bs\BB_S
\end{equation}
which we call the period map for the sextic curves $Z(F)$.

\begin{prop}
\label{proposition: open embedding}
	The period map $\Prd_S\colon \calF_S\to \Gamma_S\bs \BB_S$ is injective.
\end{prop}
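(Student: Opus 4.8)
The plan is to prove injectivity of $\Prd_S$ by the standard Torelli-type argument for non-symplectic automorphisms of $K3$ surfaces, adapted to the $\mu_3$-action. Suppose $F, F' \in \calV^\circ$ satisfy $\Prd_S(F) = \Prd_S(F')$. First I would unwind the definition: this means there is a Hodge isometry $\phi\colon T_F \to T_{F'}$ of the $\mu_3$-characteristic sublattices that intertwines the $\mu_3$-actions (i.e.\ $\phi \circ \zeta_3 = \zeta_3 \circ \phi$) and sends $T_F^{2,0}$ to $T_{F'}^{2,0}$. The goal is to upgrade $\phi$ to an isomorphism $W_F \cong W_{F'}$ carrying the fibration structure and the $\mu_3$-action to each other, and then to conclude that $F$ and $F'$ differ by an element of $G_S$, so that they define the same point of $\calF_S$.

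The key steps, in order: (1) Extend $\phi$ to a Hodge isometry $\Phi\colon H^2(W_F,\ZZ) \to H^2(W_{F'},\ZZ)$. Since $T_F$ has orthogonal complement $P_F \cong U \oplus A_2(-1)^3$ inside $H^2$, and $T_{F'}$ likewise, I would fix a marking of each $P_F$ by the tautological generators (section, fiber, non-identity components of the type IV fibers); this identifies $P_F$ with $P_{F'}$ canonically. By Nikulin's theory (as already invoked in Proposition \ref{proposition: shioda}), the glue between $P_F$ and $Q_F \cong T_F \cap H^2(W_F,\ZZ)$ inside $L_{K3}$ is essentially rigid, so $\phi$ together with the chosen marking extends to a full isometry $\Phi$ of $H^2$; it remains Hodge because $P_F \subset \Pic$ is algebraic and $\phi$ is a Hodge isometry on the transcendental side. (2) Arrange that $\Phi$ maps a Kähler class to a Kähler class (equivalently, an ample class to an ample class) — composing with a Weyl-group reflection supported on $(-2)$-curves in $P_{F'}$ if necessary, using that $\Phi$ already respects the fibration polarization since it fixes the classes of fiber and section. (3) Apply the strong (effective) Global Torelli theorem for $K3$ surfaces to obtain a genuine isomorphism $g\colon W_{F'} \to W_F$ with $g^* = \Phi$. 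Because $g^*$ preserves the fiber class, the section, and the components of the type IV fibers, $g$ carries the elliptic fibration $\pi_{F'}$ to $\pi_F$ and respects the distinguished section; and because $g^*$ commutes with $\zeta_3$ on $H^2$, by faithfulness of the action on $H^2$ (the $\mu_3$-action is non-symplectic, hence acts faithfully on $H^2$) we get that $g$ conjugates the $\mu_3$-action on $W_{F'}$ to that on $W_F$. (4) Translate this back downstairs: $g$ descends to an isomorphism of the quotients $W_{F'}/\mu_3 \to W_F/\mu_3$, hence to an automorphism of $\PP V$ preserving the pencil $P$ and the point $p$, i.e.\ an element of $G_S$ (up to the scaling $Z$), carrying $Z(F')$ to $Z(F)$. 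Hence $[F] = [F']$ in $\calF_S = G_S \dbs \PP\calV^\circ$.

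The main obstacle I expect is step (1)–(2): controlling the extension of $\phi$ to all of $H^2$ while simultaneously keeping track of the marking on $P_F$ and the Kähler cone, so that the isomorphism produced in step (3) genuinely respects the fibration and not merely the lattice. Concretely one must check that the reflections needed to move $\Phi$ into the positive cone / ample chamber can be chosen inside the orthogonal group of $P_{F'}$ fixing fiber and section (so they come from automorphisms of $W_{F'}$ over $P$, namely fiberwise translations and the components permutation of the type IV fibers), rather than mixing into the transcendental lattice; this is where one uses that $\Pic(W_{F'})$ is generated (for generic $F'$) exactly by $P_{F'}$, together with a density/monodromy argument to push the conclusion from generic $F'$ to all of $\calV^\circ$. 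A secondary subtlety is verifying that $\Phi$ commuting with $\zeta_3$ on $Q_F$ and being the identity on $P_F$ indeed commutes with $\zeta_3$ on all of $H^2$ — this follows since $\zeta_3$ acts trivially on $P_F$ by construction and $H^2$ is built from $P_F$ and $Q_F$ by a glue that $\zeta_3$ preserves. The rest is routine application of Torelli and descent of automorphisms through the quotient map.
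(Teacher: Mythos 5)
Your proposal is correct and follows essentially the same route as the paper's proof: extend the Hodge isometry on the transcendental piece to all of $H^2$ by gluing with an isometry of $P_F\cong U\oplus A_2(-1)^3$ compatible with the discriminant-form identification (the paper makes this explicit via surjectivity of $O(A_2^3)\to O(A_{A_2}^3)$), adjust by Weyl reflections supported on the $A_2$-roots to preserve effectivity/ampleness, then invoke strong Torelli, faithfulness of the $\mu_3$-action on $H^2$, and descent to $G_S$. The two "obstacles" you flag — matching the glue on the discriminant groups and steering $\Phi$ into the ample chamber without disturbing the transcendental part — are exactly the two points the paper handles carefully, the latter by exhibiting a specific class $h=3s_F+4f_F+z_1+z_2+z_3$ and checking ampleness via Nakai--Moishezon.
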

\begin{proof}
	Suppose there are two sextic curves $Z(F), Z(F')\in \PP \calV^{\circ}$ such that $\Prd_S(F)=\Prd_S(F')$. Then we can choose paths $\gamma, \gamma'$ (respectively) in $\calV^{\circ}$ connecting $F, F'$ (respectively) to $F_o$, such that $\gamma^*(H^{2,0}(W_F))$ and $\gamma'^*(H^{2,0}(W_{F'}))$ lie in one $\Gamma_S$-orbit. Take $g\in \Gamma_S$ such that $g(\gamma^*(H^{2,0}(W_F)))=\gamma'^*(H^{2,0}(W_
{F'}))$. Let $\iota_Q\coloneqq \gamma'^{*-1} g \gamma^*$. Then $\iota_Q \colon (Q_F, \mu_3)\cong (Q_{F'}, \mu_3)$ such that $\iota_Q(T^{2,0}_{F})=T^{2,0}_{F'}$. 
	We aim to show $Z(F)\cong Z(F')$. 
	
	Let $f_F$ and $s_F$ be the classes of the fiber and the section of the elliptic fibration $W_F\to P$. Let $e_F=s_F+f_F$. 
The pair $(e_F, f_F)$ form a standard basis of the hyperbolic lattice $U$ (namely, $e_F^2=f_F^2=0, (e_F,f_F)=1$).

By Remark \ref{proposition: sigular fiber type}, each of the three singular fibers of $W_F\to P$ of type IV has three irreducible components. Let $x_i, y_i, z_i$ ($i=1,2,3$) be the classes of the irreducible components of a fiber of type IV, such that $x_i\cdot s_F=y_i\cdot s_F=0$ and $z_i\cdot s_F=1$. By Proposition \ref{proposition: shioda}, we have 
\begin{equation*}
P_F=\langle e_F, f_F\rangle\oplus \langle x_1, y_1\rangle \oplus \langle x_2, y_2\rangle \oplus \langle x_3, y_3\rangle \cong U\oplus A_2(-1)^3
\end{equation*}
and $Q_F\cong A_2\oplus E_6(-1)^2$. Similarly, we have $s_{F'}, f_{F'}, e_{F'}, x_i'$ and $y_i'$ for $F'$.

The two tuples $(P_F, e_F, f_F)$ and $(P_{F'}, e_{F'}, f_{F'})$ are isomorphic.
%We can choose an isomorphism $\iota_P\colon(P_F, e_F, f_F)\cong (P_{F'}, e_{F'}, f_{F'})$. 
For a lattice $L$ we denote by $A_L\coloneqq L^{*}/L$ its discriminant group.
Since $A_{A_2}\cong \ZZ/3$ and $O(A_2^3)\to O(A_{A_2}^3)$ is surjective, we can choose $\iota_P\colon(P_F, e_F, f_F)\cong (P_{F'}, e_{F'}, f_{F'})$ such that $\iota_P^*\colon A_{P_F}\cong A_{P_{F'}}$ coincides with $\iota_Q^*\colon A_{Q_F}\cong A_{Q_{F'}}$. Then we can glue  $\iota_Q$ and $\iota_P$ to 
\begin{equation*}
\iota\colon H^2(W_F, \ZZ)\cong H^2(W_{F'}, \ZZ)
\end{equation*}
which is a Hodge isometry. 

If $L$ is a lattice and $\alpha\in L$ is such that $\alpha\cdot \alpha=-2$, then the reflection $r_\alpha: x\mapsto x+(x\cdot\alpha)\alpha$ acts trivially on $A_L$ as it takes any $y\in L^*$ to $y+(y\cdot\alpha)\alpha\in y+L$. 
In particular, for a root $\alpha \in A_2$, the reflection $r_{\alpha}$ induces trivial action on the discriminant group of $A_2$. 
Thus we can suitably adjust the isomorphism $\iota_P\colon P_F\cong P_{F'}$, such that $\iota$ sends an effective root in $A_2$ to an effective root. 
We take $h\coloneqq 3e_F+4f_F-x_1-y_1-x_2-y_2-x_3-y_3=3 s_F + 4 f_F + z_1 + z_2 + z_3$, which is an effective element in $P_F$. We can define $h'$ similarly. Then $\iota(h)=h'$. We claim that both $h$ and $h'$ are ample. By Nakai-Moishezon criterion, an element in the Picard group of a complex $K3$ surface is ample if and only if it has positive self-intersection and positive intersection with every irreducible curve.
	We have $h\cdot s_F = 1, h\cdot f_F = 3, h\cdot x_i=h\cdot y_i=h\cdot z_i = 1$ and $h^2=18$. Suppose $C$ is an irreducible curve on $W_F$ such that $[C]\ne s_F, f_F, x_i, y_i, z_i$. Then $C$ is not contained in any fiber, hence $f_F\cdot C>0$, which implies that $h\cdot C> 0$.
Thus the class $h$ is ample, so is $h'$. 
Therefore, $\iota$ sends an ample class to ample class.  
By global Torelli theorem (\cite[Theorem $1$]{rapoport1975torelli}), there exists an isomorphism $\eta\colon W_F\cong W_{F'}$ which induces $\iota$. Note that $\iota$ is compatible with the actions of $\mu_3$.
From the faithfulness of the action of an automorphism of a $K3$ surface on the middle cohomology, we conclude that $\eta$ is compatible with the actions of $\mu_3$.
	
	We claim $\eta$ gives rise to an isomorphism between $Z(F)$ and $Z(F')$. 
	Recall that we denote by $\pi_F$ the elliptic fibration $W_F\to P$, see \S \ref{section: Sextics and K3 Surfaces}.
	Since $\iota(f_F)=f_{F'}$, the isomorphism $\eta$ maps each fiber of $\pi_F$ to a fiber of $\pi_{F'}$.
	Hence $\eta$ also maps a singular fiber to a singular fiber of the same type.
	Note that the base $P$ can be identified with the subvariety $s_F(P)$ of $W_F$, where $s_F$ denotes the section of $\pi_F$ and $[s_F(P)]=e_F-f_F$ in $P_F$ (same thing holds for $F'$).
	Since $\iota(e_F-f_F)=e_{F'}-f_{F'}$, $\eta$ maps $s_F(P)$ isomorphically to $s_{F'}(P)$.
	Thus $\eta$ sends $Z(F_3), Z(F_6)$ to $Z(F_3'), Z(F_6')$ respectively.
	Hence it induces an element in $G_S$ that identifies $Z(F)$ with $Z(F')$.
\end{proof}

\begin{cor}
	\label{proposition: generic Pic and Tr}
	The period map $\Prd_S$ is an open embedding.
	For a generic $F\in \calV^{\circ}$, we have $\Pic(W_F)=P_F$ and the transcendental lattice $T(W_F)=Q_F$.
\end{cor}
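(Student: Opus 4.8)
The plan is to deduce Corollary \ref{proposition: generic Pic and Tr} from Proposition \ref{proposition: open embedding} (injectivity of $\Prd_S$) together with a dimension count. First I would verify that $\Prd_S$ is an open embedding: injectivity is already established, and it remains to check that the source and the target have the same dimension and that $\Prd_S$ is a local isomorphism. Both $\calF_S$ and $\Gamma_S\bs\BB_S$ have dimension $6$: on the one hand $\calF_S = G_S\dbs\PP\calV^\circ$ with $\dim\PP\calV = \dim\calV_3 + \dim\calV_6 - 1 = 4 + 7 - 1 = 10$ and $\dim G_S = \dim\GL(2) = 4$, so $\dim\calF_S = 6$; on the other hand $\BB_S = \BB(T_{F_o})$ where $h_\epsilon$ on $T_{F_o}$ has signature $(1,*)$, and since the $\mu_3$-action on $Q_F \cong A_2\oplus E_6(-1)^2$ has no invariant part (as $H^2(W_F,\ZZ)^{\mu_3}$ sits between $P_F$ and $\Pic(W_F)$ and $Q_F$ is its orthogonal complement on which $\mu_3$ acts with no trivial summand), the eigenspace $T_F\otimes\CC$ has complex dimension $\frac{1}{2}\rank Q_F = 7$ split as $(1,6)$ for $h_\epsilon$, giving $\dim\BB_S = 6$. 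That $\Prd_S$ is an immersion follows from the infinitesimal Torelli theorem for the non-symplectic $\mu_3$-action (the differential is identified with the Kodaira-Spencer map composed with contraction against $H^{2,0}$, which is injective by the standard $K3$ argument applied to the $\mu_3$-eigenspace). An injective immersion between smooth varieties of the same dimension is an open embedding onto its image.

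Next, for the generic statement about $\Pic(W_F)$ and $T(W_F)$: since $\Prd_S$ is an open embedding with $6$-dimensional image and the locus in $\Gamma_S\bs\BB_S$ where the period point is orthogonal to some extra class of $Q_F$ (i.e., lies on one of the countably many hyperplane sections $\lambda^\perp\cap\BB_S$ for $\lambda\in Q_F$ with $\lambda^2<0$) is a countable union of proper analytic subvarieties, a generic $F\in\calV^\circ$ maps to a point of $\BB_S$ avoiding all of these. For such $F$, the transcendental lattice $T(W_F)$ — defined as the smallest primitive sublattice of $H^2(W_F,\ZZ)$ whose complexification contains $H^{2,0}(W_F)$ — is forced to contain $Q_F$ (since $Q_F$ is primitive, $\mu_3$-stable, and $T_F\otimes\CC\supset H^{2,0}$), and cannot be strictly larger: any class in $T(W_F)\setminus Q_F$ would pair nontrivially with $P_F$ or would enlarge the $\mu_3$-eigenspace, contradicting genericity via the hyperplane argument. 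Hence $T(W_F) = Q_F$ and correspondingly $\Pic(W_F) = T(W_F)^\perp = Q_F^\perp = P_F$ by Proposition \ref{proposition: shioda}.

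The main obstacle I anticipate is the dimension count on the ball side, specifically confirming that the $\mu_3$-action on $Q_F \cong A_2\oplus E_6(-1)^2$ has no nonzero invariants, so that $T_F$ exhausts $Q_F\otimes\QQ(\zeta_3)$ and $\dim\BB_S$ comes out to $6$ matching $\dim\calF_S$; this requires knowing the $\mu_3$-action explicitly enough (which is the content of \S\ref{section: abstract description}, or can be seen from the geometry: the three type-IV fibers contribute the $A_2(-1)^3$ summand to $P_F$ inside the invariant part, and $\rank H^2(W_F,\ZZ)^{\mu_3} = \rank\Pic$ for the quotient being rational forces the invariant part to be exactly $P_F$ generically). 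Once that is settled, the rest is a routine combination of the global Torelli input already used in Proposition \ref{proposition: open embedding}, the countability of Noether-Lefschetz loci, and the lattice identification from Proposition \ref{proposition: shioda}.
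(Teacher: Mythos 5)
Your overall architecture (injectivity $+$ dimension matching $\Rightarrow$ open embedding, then Noether--Lefschetz to get the generic Picard statement) is the right shape, but there is a real gap in the step where you claim $\dim\BB_S=6$. You derive $\dim T_F = 7$ from the assertion that $\mu_3$ acts on $Q_F$ with no invariants, and your two offered justifications both fail: the geometric one is circular (that $W_F/\mu_3$ is rational only gives $H^2(W_F,\ZZ)^{\mu_3}\subset\Pic(W_F)$, not equality; and ``the invariant part is exactly $P_F$'' is essentially the statement you are trying to prove), while the appeal to \S\ref{section: abstract description} is a forward reference that would make the logic of the paper circular (indeed the explicit model there is most naturally \emph{deduced} from, not used to prove, the present corollary). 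The paper avoids this entirely by running the inequality in the other direction: injectivity of $\Prd_S$ from a $6$-dimensional source forces $\dim\BB_S\ge 6$, i.e.\ $\dim T_F\ge 7$; and since $T_F$ and $\overline T_F$ are disjoint $\mu_3$-eigenspaces sitting inside $(Q_F)_{\QQ(\zeta_3)}$ with $\rank Q_F=14$, the a priori bound $2\dim T_F\le 14$ pins $\dim T_F = 7$ and simultaneously proves, rather than assumes, that $(Q_F)_{\QQ(\zeta_3)}=T_F\oplus\overline T_F$. That trick closes the gap you flagged as your main obstacle without needing any explicit knowledge of the $\mu_3$-action.

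A secondary, smaller point: your invocation of an infinitesimal Torelli argument to get that $\Prd_S$ is an immersion is unnecessary. An injective holomorphic map between (possibly singular, reduced) complex analytic spaces of the same dimension already has open image and is a biholomorphism onto it, so ``injective $+$ equidimensional'' alone gives the open embedding; the paper uses exactly this. Once the dimension count is corrected along the lines above, your Noether--Lefschetz step and the identifications $T(W_F)=Q_F$, $\Pic(W_F)=P_F$ for generic $F$ go through as you wrote them.
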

\begin{proof}
	By the injectivity of the period map $\Prd_S$, we have $\dim \BB_S\ge 6$, hence $\dim T_F\ge 7$. 
	Since $\rank(Q_F)=14$, we must have $\dim T_F = 7$ and $\dim \BB_S = 6$.
	The equality $\dim \calF_S=\dim \BB_S=6$ and the injectivity imply that $\Prd_S$ is an open embedding.
	Since $\dim T_F=7$ and $\rank(Q_F)=14$, we have $(Q_F)_{\QQ(\zeta_3)}= T_F\oplus \overline{T}_F$.
	A generic element in $\BB_S$ is not orthogonal to any elements in $Q_F$.
    By the openness of $\Prd_S$, we know that for a generic choice of $F$, the $K3$ surface $W_F$ satisfies $\Pic(W_F)\cap Q_F=0$. 
    Therefore, we have $\Pic(W_F)=P_F$ and $T(W_F)=Q_F$.
\end{proof}

\section{Hodge Structures of $W_F$ from Deligne-Mostow theory}
\label{section: hodge structure}
In this section we establish a relation (see Proposition \ref{proposition: isomorphism between hodge structures} and \ref{proposition: characterization of T_F}) between the weight-two Hodge structures of $K3$ surfaces $W_F$ and the weight-one Hodge structures of Deligne-Mostow curves $C_m$. This relation is obtained from the explicit birational model for $W_F$ (see Proposition \ref{proposition: birational}) in \S \ref{section: trivialization} combining with the Chevalley-Weil formula that will be introduced in \S \ref{subsection: c-w formula}.

\subsection{Chevalley-Weil Formula}
\label{subsection: c-w formula}
In this section we introduce the Chevalley-Weil formula.
See \cite{chevalley1934uber}, \cite{naeff2005chevalley}.
Let $f\colon X\rightarrow Y$ be a Galois covering of degree $n$ between two nonsingular projective curves $X$, $Y$ over $\CC$, with $G\coloneqq \Gal(X/ Y)$ the Galois group. 
The action of $G$ on $X$ induces a linear representation $G\rightarrow \GL(H^0(X,\Omega_X^1))$.
Chevalley and Weil \cite{chevalley1934uber} studied the multiplicity of a given irreducible representation of $G$ in $H^0(X,\Omega_X^1)$.

Let $p_1,\cdots, p_r\in Y$ be the branched points of $f$.
Let $e_i$ be the ramification index of the points in $f^{-1}(p_i)$. For a point $q\in f^{-1}(p_i)$, we denote by $g_q$ the element in $G$ such that $g_q$ fixes $q$ and the pullback of $g_q$ on the cotangent space $T^*_q(X)$ is by multiplying $\zeta_{e_i}$. 

For an irreducible representation $\rho\colon G\to \GL(d_{\rho}, \CC)$ with character $\chi_{\rho}$ and degree $d_{\rho}$. Let $m_{\rho}$ be the multiplicity of $\rho$ in the representation $G\rightarrow \GL(H^0(X,\Omega_X^1))$. We denote by $N_{ij}$ the multiplicity of $\zeta_{e_i}^j$ as an eigenvalue of the matrix $\rho(g_q)$ for $q\in f^{-1}(p_i)$. The number $N_{ij}$ is well-defined since for another $q'\in f^{-1}(p_i)$, the element $g_{q'}$ is conjugate to $g_q$ in $G$. We write $\left\langle r \right\rangle=r-\lfloor r \rfloor$ for the fractional part of a rational number $r$.
The following is the Chevalley-Weil formula.
\begin{thm}[Chevalley-Weil]
\label{theorem: C-W}
Let $\rho$ be an irreducible summand of the representation $G\rightarrow \GL(H^0(X,\Omega_X^1))$. Then
\begin{equation*}
m_{\rho} = d_{\rho}(g(Y)-1) + \delta + \sum_{i=1}^{r} \sum_{j=0}^{e_i-1} N_{ij}\left\langle -\frac{j}{e_i} \right\rangle,
\end{equation*}
where $\delta=1$ if $\chi$ is the trivial character and $\delta=0$ otherwise, and $g(Y)$ denotes the genus of $Y$. 
\end{thm}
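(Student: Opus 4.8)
The plan is to compute the character of the representation $V:=H^0(X,\Omega^1_X)$ and then read off $m_\rho$ from the orthogonality relation $m_\rho=\langle\chi_V,\chi_\rho\rangle_G=\frac1{|G|}\sum_{g\in G}\chi_V(g)\overline{\chi_\rho(g)}$. The value at the identity is $\chi_V(1)=\dim V=g(X)$, which Riemann--Hurwitz, $2g(X)-2=|G|(2g(Y)-2)+\sum_{i=1}^{r}\frac{|G|}{e_i}(e_i-1)$, rewrites as $g(X)-1=|G|(g(Y)-1)+\frac{|G|}{2}\sum_{i=1}^{r}(1-e_i^{-1})$, using that $f$ has $|G|/e_i$ points over $p_i$, each ramified of index $e_i$. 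For $g\neq1$ I would use the holomorphic Lefschetz fixed-point formula applied to the structure sheaf, which on the curve $X$ reads $\mathrm{tr}(g\mid H^0(X,\mathcal{O}_X))-\mathrm{tr}(g\mid H^1(X,\mathcal{O}_X))=\sum_{q:gq=q}(1-\lambda_q(g))^{-1}$, where $\lambda_q(g)$ is the eigenvalue of $g$ on the cotangent space $T^*_qX$. A nontrivial $g$ fixes exactly the ramification points $q$ over some $p_i$ with $g$ in the stabiliser $G_q=\langle g_q\rangle$ (cyclic since $G_q$ acts faithfully on $T^*_qX$); if $g=g_q^j$ then $\lambda_q(g)=\zeta_{e_i}^{j}$. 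Since $H^0(X,\mathcal{O}_X)=\CC$ is the trivial module and, by $G$-equivariant Serre duality, $H^1(X,\mathcal{O}_X)\cong V^{\vee}$, this yields $\overline{\chi_V(g)}=1-\sum_{q:gq=q}(1-\zeta_{e_i}^{j})^{-1}$, hence $\chi_V(g)=1-\sum_{q:gq=q}(1-\zeta_{e_i}^{-j})^{-1}$.

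Plugging this into the orthogonality sum and writing $\chi_V(g)=1+F(g)$ for $g\neq1$, a short manipulation using $\frac1{|G|}\sum_g\overline{\chi_\rho(g)}=\delta$ (which equals $\delta$ because $\rho$ is trivial iff $\rho^{\vee}$ is) together with Riemann--Hurwitz extracts $d_\rho(g(Y)-1)+\delta+\frac{d_\rho}{2}\sum_i(1-e_i^{-1})$ from the $g=1$ term and the constant parts of $\chi_V(g)$. What remains is $-\frac1{|G|}\sum_{g\neq1}\overline{\chi_\rho(g)}\sum_{q:gq=q}(1-\zeta_{e_i}^{-j})^{-1}$. I would reorganise this by the branch point $p_i$ lying below the fixed points: the $|G|/e_i$ points over $p_i$ form one $G$-orbit, so their stabilisers are conjugate and contribute equally, and one is reduced to $-\sum_i\frac1{e_i}\sum_{j=1}^{e_i-1}\overline{\chi_\rho(g_{q_i}^{j})}(1-\zeta_{e_i}^{-j})^{-1}$ for one chosen $q_i$ over $p_i$. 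Substituting $\overline{\chi_\rho(g_{q_i}^{j})}=\sum_{k=0}^{e_i-1}N_{ik}\zeta_{e_i}^{-kj}$ directly from the definition of the $N_{ik}$ and reindexing $j\mapsto e_i-j$ turns the inner sum into $\sum_{j=1}^{e_i-1}\zeta_{e_i}^{kj}(1-\zeta_{e_i}^{j})^{-1}$, and the classical evaluation $\sum_{j=1}^{e-1}\zeta_e^{kj}(1-\zeta_e^{j})^{-1}=\frac{e-1}{2}-e\langle-k/e\rangle$ converts the whole expression into $\sum_i\sum_{j=0}^{e_i-1}N_{ij}\langle-j/e_i\rangle-\frac{d_\rho}{2}\sum_i(1-e_i^{-1})$. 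The half-sum cancels the one left over from Riemann--Hurwitz, and collecting terms gives the asserted formula.

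The delicate point is the last stretch: the root-of-unity conventions must be tracked with care --- action on the cotangent versus the tangent space, the $g^*$ versus $(g^{-1})^*$ convention on $H^0(\Omega^1_X)$, and the conjugation introduced by Serre duality --- so that the fractional parts emerge as $\langle-j/e_i\rangle$ rather than $\langle j/e_i\rangle$, and the two half-sums cancel with the correct sign. Testing the formula on $\rho=\mathbf{1}$ (it must give $m_{\mathbf{1}}=\dim H^0(Y,\Omega^1_Y)=g(Y)$) and on the regular representation $\rho=\CC[G]$ (it must give $m_{\CC[G]}=g(X)$, i.e.\ Riemann--Hurwitz again) pins the signs down. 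An alternative that avoids the fixed-point theorem is to follow the original argument of Chevalley and Weil: push $\Omega^1_X$ down to $Y$, use relative duality $f_*\Omega^1_X\cong(f_*\mathcal{O}_X)^{\vee}\otimes\omega_Y$ to identify $m_\rho$ with $h^1$ of the $\rho$-isotypic summand of the $G$-sheaf $f_*\mathcal{O}_X$, compute the degree of that summand from the explicit $G$-module structure of $f_*\mathcal{O}_X$ on a small disc around each $p_i$, and conclude with Riemann--Roch and Serre duality on $Y$; the local computation at the branch points is then the heart of the matter, and it is the same cyclotomic count in disguise.
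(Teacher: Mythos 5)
The paper does not prove this statement; it cites it as a known result, referencing Chevalley--Weil's original 1934 paper and Naeff's exposition. Your sketch is therefore additional content rather than a competing proof. That said, the argument you outline is the standard modern route and it is correct in structure: compute $\chi_V$ via the holomorphic Lefschetz fixed-point formula (the trivial contribution of $H^0(X,\mathcal{O}_X)$ and equivariant Serre duality converting $H^1(X,\mathcal{O}_X)$ to $V^\vee$), pair with $\chi_\rho$, reorganize the fixed-point sum over branch points using that the fibre over $p_i$ is a single $G$-orbit with conjugate cyclic stabilizers, and evaluate the cyclotomic sum $\sum_{j=1}^{e-1}\zeta_e^{kj}(1-\zeta_e^j)^{-1}=\tfrac{e-1}{2}-e\langle -k/e\rangle$ so that the half-sum $\tfrac{d_\rho}{2}\sum_i(1-e_i^{-1})$ coming from Riemann--Hurwitz cancels, using $\sum_k N_{ik}=d_\rho$. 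I verified that the signs and the fractional parts land correctly: for $\rho=\mathbf{1}$ one indeed gets $m_{\mathbf 1}=g(Y)$, and summing $d_\rho m_\rho$ over all $\rho$ recovers Riemann--Hurwitz. The one place a reader must be genuinely careful is exactly where you flag it: the $(g^{-1})^*$ convention defining the $G$-action on $H^0(\Omega^1_X)$, the conjugation introduced by Serre duality, and the cotangent (not tangent) eigenvalue convention used in the paper's definition of $g_q$ and $N_{ij}$; all three are consistent with the cited form of the theorem, so the reindexing $j\mapsto e_i-j$ produces $\langle -j/e_i\rangle$ and not $\langle j/e_i\rangle$. Your closing remark about the original Chevalley--Weil route through $f_*\mathcal O_X$, relative duality, and a local count at the branch points is also accurate and is the approach taken in Naeff's notes; the two arguments buy the same thing, with Lefschetz trading local sheaf-theoretic bookkeeping for the cyclotomic identity.
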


If $G$ is abelian, then the element $g_q$ does not depend on the choice of $q\in f^{-1}(p_i)$. We then call $g_q$ the local monodromy of $f$ at $p_i$.

In our case, the curve $C$ is defined by the affine equation (\ref{equation: C}) and the map $f\colon C\to P$ is a Galois cyclic cover of degree $6$ with $\Gal(C/P)=\mu_6$ such that the element $\zeta_6\in \mu_6$ sends $(t,y)$ to $(t, \zeta_6 y)$. The Deligne-Mostow data of $C$ is $(\frac{1}{3},\frac{1}{3},\frac{1}{3},\frac{1}{6},\frac{1}{6},\frac{1}{6},\frac{1}{6},\frac{1}{6},\frac{1}{6})$. 
Denote by $p_1, p_2, p_3$ the branched points with weight $\frac{1}{3}$, and $p_4, \cdots, p_9$ the branched points with weight $\frac{1}{6}$.
The local monodromy at $p_i$ equals to $\zeta_3\in \mu_6$ (when $1\le i\le 3$) or $\zeta_6\in \mu_6$ (when $4\le i\le 9$).
Define characters
\begin{equation}
\label{equation: rho_k}
\rho_k\colon \mu_6\rightarrow \CC^*, \rho_k(\zeta_6)=\zeta_6^k, 0\le k \le 5.
\end{equation}
Then for $\rho_k$ we have $N_{ij}=1$ if $j=k$, and $N_{ij}=0$ otherwise. Applying Theorem \ref{theorem: C-W} for $\rho_k$, we have
\begin{equation}
\label{equation: C-W}
m_{\rho_k} = -1 + \delta + 3\left\langle -\frac{k}{3} \right\rangle + 6\left\langle -\frac{k}{6} \right\rangle,
\end{equation}
where $\delta=1$ if $k=0$, and $\delta=0$ otherwise. More explicitly, we have $$(m_{\rho_0}, m_{\rho_1}, m_{\rho_2}, m_{\rho_3}, m_{\rho_4}, m_{\rho_5})=(0, 6, 4, 2, 3, 1).$$ 
The sum of $m_{\rho_k}$ is $16$, which is the genus of the curve $C$.
We will use the value of $m_{\rho_1}$ and $m_{\rho_5}$ to prove Lemma \ref{lemma: dim T_C^{1,0}}.

\subsection{Hodge Structures of $W_F$}

We analyze the Hodge structure of $W_F$ through the birational isomorphism (see Proposition \ref{proposition: birational}) between $W_F$ and $C\times_{l_d} D$.
For the definition of $l_d$, see (\ref{action: diagonal}). 

If two complex smooth projective surfaces are birational to each other, then they have canonically isomorphic transcendental lattices, see \cite[Lemma $3.1$]{shioda2008k3}. Therefore, we can define the transcendental lattice for a complex irreducible surface (which is not required to be smooth or projective) to be the transcendental lattice of the minimal model of any of its projective compactifications.
From Proposition \ref{proposition: birational} we have a birational map $C\times_{l_d} D\dashrightarrow W_F$. 

There are $54$ points on $C\times D$ with nontrivial stabilizers under the diagonal action $l_d$.
The quotient-product surface $C\times_{l_d} D$ has $27$ cyclic quotient singularities. 
Recall that every cyclic quotient singularity is locally analytically isomorphic to the quotient of $\CC^2$ by the action of a diagonal linear automorphism with eigenvalues $\zeta_n$ and $\zeta_n^q$ with $\gcd(n,q)=1$.
This is called a singularity of type $\frac{1}{n}(1,q)$ (see \cite[Remark $1.1$]{bauer2012classification}).
A cyclic quotient singularity can be resolved by the so-called Hirzebruch-Jung strings (see \cite[Chapter \uppercase\expandafter{\romannumeral3}, \S 5]{barth2004compact}).
In our case, there are $6$ ($15$, $6$ resp.) of them of type $\frac{1}{6}(1,1)$ ($\frac{1}{3}(1,1)$, $\frac{1}{2}(1,1)$ resp.). 
The minimal resolution $\widetilde{C \times_{l_d} D}$ of $C\times_{l_d} D$ is obtained by blowing up each singularity once.
For a singularity of type $\frac{1}{6}(1,1)$ ($\frac{1}{3}(1,1)$, $\frac{1}{2}(1,1)$ resp.), we obtain an exceptional curve with self-intersection $-6$ ($-3$, $-2$ resp.).

%There is an alternative construction of the minimal resolution.
Let $\widetilde{C \times D}$ be the blowup of $C\times D$ at the $54$ points.
%Denote by $\widetilde{W}_F\coloneqq \widetilde{C\times D}/{l_d}$.
Then we have $\widetilde{C\times D}/{l_d}\cong \widetilde{C \times_{l_d} D}$. 
We then have $H^2(\widetilde{C\times D}/{l_d},\QQ)\cong H^2(\widetilde{C\times D},\QQ)^{l_d}$ and $T(\widetilde{C\times D}/{l_d})_{\QQ}\cong T(\widetilde{C\times D})^{l_d}_{\QQ}$. 
By the natural birational morphisms $\widetilde{C\times D}/{l_d}\to W_F$ and $\widetilde{C\times D}\to C\times D$, we have isomorphisms $T(W_F)\cong T(\widetilde{C\times D}/{l_d})$ and $T(C\times D)\cong T(\widetilde{C\times D})$ between transcendental lattices. 
Therefore, we have an isomorphism $T(W_F)_{\QQ}\cong T(C\times D)^{l_d}_{\QQ}$.

The $\QQ$-vector space $T(W_F)_{\QQ}\cong T(C\times D)^{l_d}_{\QQ}$ is a subspace of 
\begin{equation}
\label{equation: kunneth}
H^2(C\times D,\QQ)^{l_d}= (H^2(C,\QQ)\otimes H^0(D,\QQ))\oplus (H^0(C,\QQ)\otimes H^2(D,\QQ))\oplus (H^1(C,\QQ)\otimes H^1(D,\QQ))^{l_d}.
\end{equation}
The first two summands of the right part of Equation \eqref{equation: kunneth} belong to $\Pic(C\times D)_{\QQ}$.
Thus we have an injective map
\begin{equation}
\label{equation: inclusion of T}
\lambda\colon T(W_F)_{\QQ}\hookrightarrow (H^1(C,\QQ)\otimes H^1(D,\QQ))^{l_d},
\end{equation}
which is indeed an isomorphism as we will show in Proposition \ref{proposition: isomorphism between hodge structures}.

Let $\rho_D$ and $\overline{\rho}_D$ be the characters of $\mu_6$ corresponding to the induced actions on $H^{0,1}(D)$ and $H^{1,0}(D)$.
By definition we have $H^{1,0}(D)=H^1(D, \CC)_{\overline{\rho}_D}$ and $H^{0,1}(D)=H^1(D, \CC)_{\rho_D}$. 
\begin{lem}\label{lemma: description of rho_D}
We have $\rho_D=\rho_5$.
\end{lem}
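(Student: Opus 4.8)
The plan is to compute directly the $\mu_6$-action on the $1$-dimensional space $H^{1,0}(D)=H^0(D,\Omega_D^1)$ and read off the corresponding character. The curve $D$ is defined by $u^2=v(v^3+1)$; from the Kodaira-type analysis (or simply because the smooth fibers have $j$-invariant $0$) it has genus $1$, so $H^0(D,\Omega_D^1)$ is spanned by a single holomorphic differential, and I need only identify how $\zeta_6$ scales it. The action to use is the one in \eqref{action on C and D}, namely $\zeta_6\cdot(u,v)=(\zeta_6 u,\zeta_3 v)$.

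First I would exhibit an explicit generator of $H^0(D,\Omega_D^1)$. Differentiating the defining equation gives $2u\,du=(4v^3+1)\,dv$, so $\omega\coloneqq \dfrac{dv}{u}=\dfrac{2\,du}{4v^3+1}$ is a rational differential; a local check at the three branch points of the double cover $v$ (the roots of $v(v^3+1)$, together with the point over $v=\infty$) shows it is holomorphic and nowhere vanishing, hence a basis of $H^0(D,\Omega_D^1)$. Then I apply the automorphism: pulling back $\omega$ under $(u,v)\mapsto(\zeta_6 u,\zeta_3 v)$ sends $dv\mapsto \zeta_3\,dv$ and $u\mapsto \zeta_6 u$, so $\omega\mapsto \dfrac{\zeta_3}{\zeta_6}\,\omega=\zeta_6\,\omega$ (using $\zeta_3=\zeta_6^2$). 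Hence $\zeta_6$ acts on $H^{1,0}(D)$ by multiplication by $\zeta_6$, which by definition means $\overline{\rho}_D(\zeta_6)=\zeta_6$, i.e. $\overline{\rho}_D=\rho_1$ in the notation of \eqref{equation: rho_k}, and therefore $\rho_D=\overline{\rho}_D^{-1}$ corresponds to $\zeta_6\mapsto\zeta_6^{-1}=\zeta_6^5$, that is $\rho_D=\rho_5$.

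As a consistency check — and an alternative route if one prefers to avoid the local holomorphy verification — one can instead invoke the Chevalley-Weil computation already set up in \S\ref{subsection: c-w formula}: the normalization $C$ of $y^6=f_3(t)^2f_6(t)$ was shown via the fiber-product description (Proposition \ref{proposition: quotient -product sturcture}) to be birational to a curve whose $H^{1,0}$ decomposes with multiplicities $(m_{\rho_0},\dots,m_{\rho_5})=(0,6,4,2,3,1)$; the differential $\omega$ on $D$ corresponds under the isomorphism $\kappa$ of \eqref{map: kappa} to a form transforming in the same character as it does on $D$, and matching the action on $v=\frac{x_1y^2}{f_3(t)}$ and $u=\frac{sy}{x_1f_3(t)}$ against the monomials appearing in the $\mu_6$-eigenspace decomposition on $C\times D$ forces the same conclusion. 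Either way the identity $\rho_D=\rho_5$ follows.

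\textbf{Main obstacle.} The only non-formal point is verifying that $\omega=dv/u$ is genuinely a holomorphic (and non-vanishing) differential on the complete smooth model $D$ rather than merely a rational one — this requires checking behavior at the Weierstrass points and at infinity, i.e. confirming $\deg(\mathrm{div}(\omega))=0$ with no poles, equivalently that $g(D)=1$ so the canonical bundle is trivial. Once that is in hand the character computation is a one-line substitution, and the passage from $\overline{\rho}_D$ to $\rho_D$ is just complex conjugation of characters of $\mu_6$.
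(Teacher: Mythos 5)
Your proof is correct and follows essentially the same route as the paper: both identify $\omega=dv/u$ as a generator of $H^{1,0}(D)$, pull it back under $(u,v)\mapsto(\zeta_6 u,\zeta_3 v)$ to get $\zeta_6\omega$, and conclude $\overline{\rho}_D=\rho_1$, hence $\rho_D=\rho_5$. The extra care you take (verifying holomorphy of $\omega$ at the branch points, noting $\rho_D$ is the conjugate of $\overline{\rho}_D$) and the optional Chevalley--Weil cross-check are just added detail on the same argument.
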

\begin{proof}
	Recall that $D$ is determined by the equation $u^2=v(v^3+1)$, and $\frac{dv}{u}$ represents a generator of $H^{1,0}(D)$. 
	We have defined the action of $\zeta_6\in\mu_6$ on $D$ by sending $(u,v)$ to $(\zeta_6 u, \zeta_3 v)$. 
	Thus $\zeta_6\in \mu_6$ acts on $\frac{dv}{u}$ by multiplying $\zeta_6$.
	Hence $\overline{\rho}_D=\rho_1$ and $\rho_D=\rho_5$, with $\rho_k$ defined in \eqref{equation: rho_k}.
\end{proof}

\begin{defn}We denote by $T_C$ the $\rho_D$-characteristic subspace of $H^1(C, \QQ(\zeta_3))$, and let $T_C\otimes \CC=T_C^{1,0}\oplus T_C^{0,1}$ be the Hodge decomposition.
\end{defn}
%\Yiming{language}

\begin{lem}
\label{lemma: dim T_C^{1,0}}
We have $\dim(T_C^{1,0})=1$ and $\dim(T_C^{0,1})=6$.
\end{lem}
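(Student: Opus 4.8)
The plan is to compute the dimensions of the characteristic pieces $H^1(C,\CC)_{\rho_k}$ directly from the Chevalley--Weil data and then identify $T_C$ with the pair of conjugate pieces indexed by $\rho_D=\rho_5$ and $\overline{\rho}_D=\rho_1$. Recall that $m_{\rho_k}$ computed in \eqref{equation: C-W} is the multiplicity of $\rho_k$ in the representation of $\mu_6$ on $H^0(C,\Omega^1_C)=H^{1,0}(C)$. Since $H^1(C,\CC)=H^{1,0}(C)\oplus H^{0,1}(C)$ and complex conjugation sends the $\rho_k$-part of $H^{1,0}(C)$ to the $\rho_{-k}=\rho_{6-k}$-part of $H^{0,1}(C)$, the dimension of the $\rho_k$-characteristic subspace of $H^1(C,\CC)$ is $m_{\rho_k}+m_{\rho_{6-k}}$, with the summand $m_{\rho_k}$ accounting for its $(1,0)$-part and $m_{\rho_{6-k}}$ for its $(0,1)$-part.

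First I would apply this with $k=5$: by Lemma \ref{lemma: description of rho_D} the character $\rho_D$ equals $\rho_5$, so $T_C$ is the $\rho_5$-characteristic subspace of $H^1(C,\QQ(\zeta_3))$ (note $\rho_1$ and $\rho_5$ restrict to the two nontrivial characters of $\mu_3$, so this subspace is defined over $\QQ(\zeta_3)$ as asserted). Its Hodge decomposition then has $\dim T_C^{1,0}=m_{\rho_5}$ and $\dim T_C^{0,1}=m_{\rho_1}$. Plugging in the explicit values $(m_{\rho_0},\dots,m_{\rho_5})=(0,6,4,2,3,1)$ from \eqref{equation: C-W} gives $\dim T_C^{1,0}=m_{\rho_5}=1$ and $\dim T_C^{0,1}=m_{\rho_1}=6$, as claimed. (One sanity check: $\dim T_C=7$, which matches $\dim T_F=7$ from Corollary \ref{proposition: generic Pic and Tr}, consistent with the anticipated isomorphism $\lambda$.)

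There is no real obstacle here; the only point requiring a little care is the bookkeeping of which Hodge piece corresponds to which multiplicity — i.e. the convention that $H^{1,0}(D)=H^1(D,\CC)_{\overline\rho_D}=H^1(D,\CC)_{\rho_1}$ forces $T_C^{1,0}$ to sit in the $\rho_5$-eigenspace of $H^{1,0}(C)$ (contributing $m_{\rho_5}$) rather than of $H^{0,1}(C)$. Since $\rho_5$ is the character by which $\mu_6$ acts on $H^{1,0}(D)$ with $\rho_5=\overline{\rho_1}$, the correct reading is $T_C^{1,0}\subset H^{1,0}(C)_{\rho_5}$ and $T_C^{0,1}\subset H^{0,1}(C)_{\rho_5}=\overline{H^{1,0}(C)_{\rho_1}}$, whence the two dimensions $m_{\rho_5}=1$ and $m_{\rho_1}=6$. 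I would state this correspondence explicitly and then conclude.
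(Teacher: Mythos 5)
Your proof is correct and uses the same ingredients as the paper's: the identification $\rho_D=\rho_5$, the Chevalley--Weil multiplicities $m_{\rho_5}=1$ and $m_{\rho_1}=6$, and the fact that complex conjugation exchanges the $\rho_k$- and $\rho_{6-k}$-eigenspaces so that $T_C^{1,0}=H^{1,0}(C)_{\rho_5}$ and $T_C^{0,1}=\overline{H^{1,0}(C)_{\rho_1}}$. The paper phrases the second step via $\overline{T}_C$ rather than directly via $H^{0,1}(C)_{\rho_5}$, but this is the same computation.
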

\begin{proof}
	By Lemma \ref{lemma: description of rho_D}, we have $\rho_D=\rho_5$.
	By Equation \eqref{equation: C-W}, we have 
	\begin{equation*}
	\dim T^{1,0}_C=m_{\rho_5}=1, \dim (\overline{T}_C)^{1,0}=m_{\rho_1}=6.
	\end{equation*}
	Thus we have $\dim T^{0,1}_C=\dim (\overline{T}_C)^{1,0}=6$ and $\dim (\overline{T}_C)^{0,1}=\dim T^{1,0}_C=1$. 
\end{proof}

The next proposition relates the polarized weight two Hodge structure on $Q_F$ and the polarized weight one Hodge structure on $H^1(C, \QQ)$.

\begin{prop}
\label{proposition: isomorphism between hodge structures}
For $F=X_0^3 F_3+F_6\in \calV^{\circ}$, we have an isomorphism
\begin{equation}
\label{equation: delta}
(Q_F)_{\QQ}\cong (H^1(C, \QQ)\otimes H^1(D, \QQ))^{l_d}.
\end{equation} 
After tensoring with $\CC$, this isomorphism sends $H^{2,0}(W_F)$ to $T_C^{1,0} \otimes H^{1,0}(D)$.
\end{prop}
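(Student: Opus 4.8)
The plan is to deduce the isomorphism from the explicit birational model of Propositions~\ref{proposition: quotient -product sturcture} and \ref{proposition: birational}, to fix the dimensions on both sides using the Chevalley--Weil values of \S\ref{subsection: c-w formula}, and to read off the Hodge filtration from Lemmas~\ref{lemma: description of rho_D} and \ref{lemma: dim T_C^{1,0}}. In particular this will also show that the map $\lambda$ of \S\ref{section: hodge structure} is an isomorphism.

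First I would promote the birational map $\psi\colon C\times_{l_d}D\dashrightarrow W_F$ of Proposition~\ref{proposition: birational}, composed with the degree-$6$ quotient $C\times D\to C\times_{l_d}D$, to a correspondence: after resolving indeterminacies one gets a smooth projective surface $\widetilde Z$ with a birational morphism $q\colon\widetilde Z\to\widetilde{C\times D}$ onto the blow-up of $C\times D$ at the $54$ points with non-trivial $l_d$-stabiliser, and a morphism $p\colon\widetilde Z\to W_F$ that is generically $6$-to-$1$ and Galois with group $\mu_6$ acting through $l_d$. Then $p^{*}$ identifies $H^2(W_F,\QQ)$ with $H^2(\widetilde Z,\QQ)^{l_d}=H^2(\widetilde{C\times D},\QQ)^{l_d}$ (the extra blow-ups in $q$ add only algebraic classes), and K\"unneth together with $l_d$-equivariance give
\begin{equation*}
H^2(\widetilde{C\times D},\QQ)^{l_d}=\mathrm{Alg}^{l_d}\ \oplus\ \big(H^1(C,\QQ)\otimes H^1(D,\QQ)\big)^{l_d},
\end{equation*}
with $\mathrm{Alg}^{l_d}$ spanned by the two projection-fibre classes and the exceptional curves. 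Using the projection formula, $p_{*}(a)\cdot b=a\cdot p^{*}b$; for $a$ in the second summand and $b$ a generator of $P_F$ (the section, a fibre, a component of a type~IV fibre) the class $p^{*}b$ lies in $\mathrm{Alg}^{l_d}$, hence $p_{*}(a)\cdot b=0$ and $p_{*}$ maps $(H^1(C)\otimes H^1(D))^{l_d}$ into $P_F^{\perp}=(Q_F)_\QQ$. As $p_{*}|_{H^2(\widetilde Z,\QQ)^{l_d}}$ is a nonzero multiple of the inverse of $p^{*}$, this restriction is injective.

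For the dimension count, write $H^1(C,\CC)=\bigoplus_{k=0}^{5}H^1(C)_{\rho_k}$ for the eigenspace decomposition under the Galois $\mu_6$ of $C\to P$; then $\dim H^1(C)_{\rho_k}=m_{\rho_k}+m_{\rho_{6-k}}$, which with $(m_{\rho_0},\dots,m_{\rho_5})=(0,6,4,2,3,1)$ yields $(0,7,7,4,7,7)$. By Lemma~\ref{lemma: description of rho_D}, $H^1(D,\CC)=H^1(D)_{\rho_1}\oplus H^1(D)_{\rho_5}$ with one-dimensional summands and $H^{1,0}(D)=H^1(D)_{\rho_1}$. Since $l_d$ acts on $H^1(C)_{\rho_a}\otimes H^1(D)_{\rho_b}$ by $\zeta_6^{a+b}$,
\begin{equation*}
\big(H^1(C)\otimes H^1(D)\big)^{l_d}=\big(H^1(C)_{\rho_5}\otimes H^1(D)_{\rho_1}\big)\ \oplus\ \big(H^1(C)_{\rho_1}\otimes H^1(D)_{\rho_5}\big)
\end{equation*}
has dimension $7+7=14=\rank Q_F$. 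Together with the injectivity above this forces $p_{*}$ to be an isomorphism $(H^1(C,\QQ)\otimes H^1(D,\QQ))^{l_d}\xrightarrow{\ \sim\ }(Q_F)_\QQ$; it is a morphism of weight-two Hodge structures because $p^{*}$ and $p_{*}$ are. Its inverse therefore carries $H^{2,0}(W_F)$ into the $(2,0)$-part of the right-hand side, which is $(H^{1,0}(C)\cap H^1(C)_{\rho_5})\otimes H^{1,0}(D)=T_C^{1,0}\otimes H^{1,0}(D)$; this space is one-dimensional by Lemma~\ref{lemma: dim T_C^{1,0}}, so it equals the image of $H^{2,0}(W_F)$, which is exactly the second assertion.

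The main obstacle is the book-keeping in the first step: one must verify that $p^{*}(P_F)$ together with the exceptional curves of $p$ and $q$ span $\mathrm{Alg}^{l_d}$ — equivalently, that $p^{*}(Q_F)_\QQ$ meets the algebraic part of $H^2(\widetilde{C\times D},\QQ)^{l_d}$ only in $0$. This requires an explicit analysis of the $27$ cyclic quotient singularities of $C\times_{l_d}D$ (six of type $\tfrac16(1,1)$, fifteen of type $\tfrac13(1,1)$, six of type $\tfrac12(1,1)$), their Hirzebruch--Jung resolutions, and the blow-downs needed to reach the minimal model $W_F$, matching the resulting $(-1)$-, $(-2)$-, $(-3)$- and $(-6)$-curves against the section and the type~IV fibre components of Propositions~\ref{proposition: sigular fiber type} and \ref{proposition: shioda}. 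A lighter route that suffices for the applications is to run the argument for generic $F$ only: there $T(W_F)=Q_F$ by Corollary~\ref{proposition: generic Pic and Tr} and $\lambda$ is an isomorphism directly from the rank-$14$ count above, and the general case then follows since both sides, with their cup-product forms, $\mu_3$-actions and the position of the $(2,0)$-line, vary in local systems over $\calV^{\circ}$.
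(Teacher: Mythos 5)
Your ``lighter route'' at the end is precisely the paper's proof, and it works: the Chevalley--Weil count forces $\dim(H^1(C)\otimes H^1(D))^{l_d}=14$, Corollary~\ref{proposition: generic Pic and Tr} gives $T(W_F)=Q_F$ of rank $14$ for generic $F$ so that the inclusion $\lambda$ must be onto, and the fact that both sides (with their Hodge data and $\mu_3$-actions) are topologically defined lets you spread the isomorphism out over all of $\calV^{\circ}$. Your finer eigenspace bookkeeping, showing $\dim H^1(C,\CC)_{\rho_k}=m_{\rho_k}+m_{\rho_{6-k}}=(0,7,7,4,7,7)$, is a valid refinement of the paper's coarser four-term decomposition \eqref{equation: invariant part} and lands on the same total, so that part is fine.

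The heavier route contains one outright false step, beyond the bookkeeping gap you already flag: the claim that $p^{*}$ identifies $H^2(W_F,\QQ)$ with $H^2(\widetilde{C\times D},\QQ)^{l_d}$. The intermediate surface $\widetilde{C\times_{l_d}D}$ maps to $W_F$ by a birational \emph{morphism} that contracts curves, not an isomorphism; one has $\dim H^2(\widetilde{C\times D},\QQ)^{l_d}=16+27=43$ (the K\"unneth invariants give $1+1+14=16$, and the $27$ orbits of exceptional $(-1)$-curves over the $54$ blown-up points each contribute one invariant class), whereas $b_2(W_F)=22$, so $p^{*}$ has cokernel of rank $21$. Your use of the Gysin map $p_{*}$ and the projection formula to land the K\"unneth transcendental piece inside $(Q_F)_{\QQ}$ is sound, but the asserted injectivity of $p_{*}$ on that $14$-dimensional piece is not automatic (its kernel on all of $H^2(\widetilde{C\times D},\QQ)^{l_d}$ is $21$-dimensional), and establishing it cleanly is exactly the Hirzebruch--Jung bookkeeping you identify as the main obstacle. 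So the heavier route is an interesting alternative but is genuinely incomplete; the lighter route, which matches the paper, is the one to use.
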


\begin{proof}
The curve $D$ has genus $1$, and we have explicitly described (see Equation \eqref{action on C and D}) the action of $\mu_6$ on $D$. The vector space $(H^1(C,\QQ)\otimes H^1(D,\QQ))^{l_d} \otimes\CC$ can be decomposed as the direct sum of 
\begin{align}
\label{equation: invariant part}
T^{1,0}_C\otimes H^1(D)_{\overline{\rho}_D}, T^{0,1}_C\otimes H^1(D)_{\overline{\rho}_D}, H^{1,0}(C)_{\overline{\rho}_D} \otimes H^1(D)_{\rho_D}, H^{0,1}(C)_{\overline{\rho}_D} \otimes H^1(D)_{\rho_D}.
\end{align}
From \eqref{equation: invariant part} and Lemma \ref{lemma: dim T_C^{1,0}}, the dimension of $(H^1(C)\otimes H^1(D))^{l_d}$ equals to $14$.

If $F$ is generic, then by Proposition \ref{proposition: generic Pic and Tr}, we have $T(W_F)\cong Q_F$ and $\rank(T(W_F))=\rank(Q_F)=14$.
Therefore, the inclusion \eqref{equation: inclusion of T} is indeed an isomorphism 
\begin{equation*}
\lambda\colon (Q_F)_{\QQ}\cong (H^1(C,\QQ)\otimes H^1(D,\QQ))^{l_d}.
\end{equation*}
%Since the character of $\mu_6$ associated with $H^{1,0}(D)$ is $\overline{\rho}_D$, we have $(H^{1,0}(C)\otimes H^{1,0}(D))^{l_d}=T^{1,0}_C\otimes H^{1,0}(D)$. 
This isomorphism (after tensor with $\CC$) identifies $H^{2,0}(W_F)$ with $(H^{1,0}(C)\otimes H^{1,0}(D))^{l_d}=T^{1,0}_C\otimes H^{1,0}(D)$. 
We conclude Proposition \ref{proposition: isomorphism between hodge structures} for a generic choice of $F$.

Notice that both two sides of the isomorphism \eqref{equation: delta} are topologically defined, hence it still holds after deforming $F$. Therefore, the isomorphism holds for any $F\in \calV^{\circ}$.
\end{proof}

\subsection{Identification between Unitary Hermitian Forms}
\label{subsection: unitary hermitian form}
Recall that we have a birational map $C\times_{l_d} D\dashrightarrow W_F$ with compatible actions of $\mu_6$ on $C\times_{l_d} D$ and $W_F$, see Proposition \ref{proposition: sigma_F is fiberwise} and Equations \eqref{action: fiberwise}, \eqref{action: sigma'_F}.
Recall $\rho_D=\rho_5$ is the character of $\mu_6$ induced from its action on $H^{0,1}(D)$ (see Lemma \ref{lemma: description of rho_D}). 
Recall that $T_C$ is the $\rho_5$-characteristic subspace of $H^1(C, \QQ(\zeta_3))$, and $T_F$ is the $\mu_3$-characteristic subspace of $H^2(W_F, \QQ(\zeta_3))$ such that $(T_F)_{\CC}\supset H^{2,0}(W_F)$.
By \eqref{equation: PF}, we know that $T_F$ is a subspace of $(Q_F)_{\QQ(\zeta_3)}$.
The following proposition describes $T_F$.

\begin{prop}
\label{proposition: characterization of T_F}
For $F\in\calV^{\circ}$, we have $\lambda(T_F)= T_C\otimes H^1(D, \QQ(\zeta_3))_{\overline{\rho}_D}$, $(Q_F)_{\QQ(\zeta_3)} = T_F\oplus \overline{T}_F$ and $(P_F)_{\QQ(\zeta_3)}\cong H^2(W_F,\QQ(\zeta_3))^{\mu_3}$.
\end{prop}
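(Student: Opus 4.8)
The plan is to combine the isomorphism $\lambda\colon (Q_F)_{\QQ}\cong (H^1(C,\QQ)\otimes H^1(D,\QQ))^{l_d}$ from Proposition \ref{proposition: isomorphism between hodge structures} with the decomposition of the $\mu_6$-action on both sides. First I would record that $\lambda$ is $\mu_6$-equivariant: by Proposition \ref{proposition: sigma_F is fiberwise} the birational map $\psi\colon C\times_{l_d}D\dashrightarrow W_F$ intertwines the $\mu_6$-action \eqref{action: fiberwise} on $W_F$ with the action \eqref{action: sigma'_F} on $C\times_{l_d} D$, and the latter acts on $(H^1(C,\QQ)\otimes H^1(D,\QQ))^{l_d}$ only through the second factor $H^1(D,\QQ)$ via $\zeta_6$. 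Hence the $\mu_3\subset\mu_6$-eigenspace decomposition of $(Q_F)_{\QQ(\zeta_3)}$ corresponds under $\lambda$ to $H^1(C,\QQ(\zeta_3))\otimes\big(H^1(D,\QQ(\zeta_3))_{\rho}\big)$ as $\rho$ runs over characters of $\mu_6$ restricted to $\mu_3$. Since $H^1(D)$ has rank $2$ and $D$ has genus $1$, $H^1(D,\QQ(\zeta_3))=H^1(D)_{\rho_D}\oplus H^1(D)_{\overline\rho_D}$ with $\rho_D=\rho_5$ (Lemma \ref{lemma: description of rho_D}), so $\rho_D$ and $\overline\rho_D=\rho_1$ restrict to the two nontrivial characters of $\mu_3$ (namely $\zeta_3\mapsto\zeta_3^{\pm1}$), and these are the only eigenvalues occurring.

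Next I would pin down which of the two eigenspaces is $T_F$. By definition $T_F$ is the $\mu_3$-characteristic subspace containing $H^{2,0}(W_F)$. Proposition \ref{proposition: isomorphism between hodge structures} says $\lambda$ sends $H^{2,0}(W_F)$ to $T_C^{1,0}\otimes H^{1,0}(D)$, and $H^{1,0}(D)=H^1(D,\CC)_{\overline\rho_D}$ by the proof of Lemma \ref{lemma: description of rho_D}. Therefore $H^{2,0}(W_F)$ lies in $\lambda^{-1}\big(H^1(C,\QQ(\zeta_3))\otimes H^1(D,\QQ(\zeta_3))_{\overline\rho_D}\big)$, which forces this eigenspace to be $T_F$; that is, $\lambda(T_F)\subseteq H^1(C,\QQ(\zeta_3))\otimes H^1(D,\QQ(\zeta_3))_{\overline\rho_D}$. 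To upgrade the $C$-factor from all of $H^1(C,\QQ(\zeta_3))$ to $T_C$, I would use the $\mu_6$-action more finely: although $\mu_3$ only sees $\rho$ mod $3$, the full cyclic group $\mu_6=\Gal(C/P)=\Aut(D)$ acts, and on $C\times_{l_d}D$ there is a second, "fiberwise" $\mu_6$ whose eigenvalue on the $D$-factor we have fixed to $\overline\rho_D$; meanwhile $l_d$-invariance \eqref{action: diagonal} forces the $C$-factor to transform by the inverse character, i.e. by $\rho_D=\rho_5$, on the $H^1(D)_{\overline\rho_D}$-component. Hence the $C$-factor of $\lambda(T_F)$ is exactly the $\rho_5$-characteristic subspace $T_C$, giving $\lambda(T_F)=T_C\otimes H^1(D,\QQ(\zeta_3))_{\overline\rho_D}$. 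A dimension check confirms consistency: $\dim T_C=7$ (Lemma \ref{lemma: dim T_C^{1,0}}) times $\dim H^1(D)_{\overline\rho_D}=1$ gives $7=\dim T_F$ (Corollary \ref{proposition: generic Pic and Tr}).

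For the remaining two assertions: $(Q_F)_{\QQ(\zeta_3)}=T_F\oplus\overline T_F$ follows because the $\mu_3$-action on $Q_F$ has no invariants — indeed $H^2(W_F,\QQ)^{\mu_3}\subseteq\Pic(W_F)_{\QQ}$ is orthogonal to $T_F$, and $Q_F\otimes\QQ(\zeta_3)$ decomposes into the trivial eigenspace plus the $\zeta_3$- and $\zeta_3^2$-eigenspaces, the trivial part being $H^2(W_F,\QQ)^{\mu_3}\cap (Q_F)_{\QQ(\zeta_3)}$, which is zero since $H^2(W_F,\ZZ)^{\mu_3}\subseteq \Pic(W_F)$ and $P_F$ is the full invariant part by \eqref{equation: PF} together with the rank count $\rank P_F + \dim_{\QQ(\zeta_3)}T_F\cdot 2 = 8 + 14 = 22$. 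This last count simultaneously yields $(P_F)_{\QQ(\zeta_3)}\cong H^2(W_F,\QQ(\zeta_3))^{\mu_3}$: by \eqref{equation: PF} we have $P_F\subseteq H^2(W_F,\ZZ)^{\mu_3}$, and since $H^2(W_F,\QQ(\zeta_3))^{\mu_3}$ is the orthogonal complement of $T_F\oplus\overline T_F=(Q_F)_{\QQ(\zeta_3)}$ inside $H^2(W_F,\QQ(\zeta_3))$, it has dimension $22-14=8=\rank P_F$, forcing equality. The main obstacle I anticipate is the bookkeeping in the second step — carefully distinguishing the two commuting $\mu_6$-actions ($l_d$ and the fiberwise one) and checking that $l_d$-invariance pairs the $\rho_5$-eigenspace of $H^1(C)$ precisely with the $\overline\rho_D$-eigenspace of $H^1(D)$ — since a sign or inversion error there would swap $T_C$ with $\overline T_C$; the Hodge-theoretic anchor via $H^{2,0}(W_F)$ from Proposition \ref{proposition: isomorphism between hodge structures} is what rules out the wrong choice.
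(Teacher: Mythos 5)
Your proof follows essentially the same route as the paper's: $\mu_6$-equivariance of $\lambda$ (via Propositions \ref{proposition: sigma_F is fiberwise} and \ref{proposition: isomorphism between hodge structures}), the Hodge anchor $H^{2,0}(W_F)\mapsto T_C^{1,0}\otimes H^{1,0}(D)$ pinning down the correct eigenspace, and $l_d$-invariance pairing the $\overline\rho_D$-eigenspace of $H^1(D)$ with the $\rho_D=\rho_5$-eigenspace $T_C$ of $H^1(C)$, which is precisely the paper's displayed intersection computation; the last two assertions then follow from the dimension/rank count exactly as in the paper. The only cosmetic difference is that you phrase the final step as "$\mu_3$ has no invariants on $Q_F$" supported by a $8+14=22$ count, whereas the paper deduces $(Q_F)_{\QQ(\zeta_3)}=T_F\oplus\overline T_F$ more directly from $\dim T_F=\dim T_C=7$ and $\rank Q_F=14$, but these are the same observation.
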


\begin{proof}
From Proposition \ref{proposition: sigma_F is fiberwise} and \ref{proposition: isomorphism between hodge structures}, the actions of $\mu_6$ on $Q_F$, $(H^1(C)\otimes H^1(D))^{l_d}(\cong T(C\times D)^{l_d})$ are compatible with the isomorphism $\lambda\colon (Q_F)_{\QQ}\cong (H^1(C,\QQ)\otimes H^1(D,\QQ))^{l_d}$.

Note that $T_F$ is the $\mu_3$-characteristic subspace of $(Q_F)_{\QQ(\zeta_3)}$ such that $(T_F)_{\CC}\supset H^{2,0}(W_F)$. We have:
\begin{equation*}
(H^1(C,\QQ(\zeta_3))\otimes H^1(D, \QQ(\zeta_3))_{\overline{\rho}_D}) \cap ((H^1(C, \QQ(\zeta_3))\otimes H^1(D, \QQ(\zeta_3)))^{l_d})= T_C\otimes H^1(D, \QQ(\zeta_3))_{\overline{\rho}_D},
\end{equation*}
hence $T_C\otimes H^1(D, \QQ(\zeta_3))_{\overline{\rho}_D}$ is the $\mu_6$(also $\mu_3$)-characteristic subspace of $(H^1(C)\otimes H^1(D))^{l_d}$ that contains $T^{1,0}_C \otimes H^{1,0}(D)$. Therefore,
\begin{equation*}
\lambda(T_F)= T_C\otimes H^1(D, \QQ(\zeta_3))_{\overline{\rho}_D}. 
\end{equation*}
%the image $\lambda(T_F)$ is the $\mu_3$-characteristic subspace of $(H^1(C, \QQ(\zeta_3))\otimes H^1(D, \QQ(\zeta_3)))^{l_d}$ such that $\lambda(T_F)_{\CC}\supset \lambda(H^{2,0}(W_F))=T^{1,0}_C \otimes H^{1,0}(D)$.
%
%The $\QQ(\zeta_3)$-vector space $H^1(C,\QQ(\zeta_3))\otimes H^1(D, \QQ(\zeta_3))_{\overline{\rho}_D}$ is a $\mu_6$-characteristic subspace of $H^1(C,\QQ(\zeta_3))\otimes H^1(D,\QQ(\zeta_3))$ such that its complexification contains $T^{1,0}_C\otimes H^{1,0}(D)$, hence $\lambda(T_F) \subset H^1(C,\QQ(\zeta_3))\otimes H^1(D, \QQ(\zeta_3))_{\overline{\rho}_D}$. 
%Note that $H^1(C,\QQ(\zeta_3))\otimes H^1(D, \QQ(\zeta_3))_{\overline{\rho}_D}$ is also a $\mu_3$-characteristic subspace of $H^1(C,\QQ(\zeta_3))\otimes H^1(D,\QQ(\zeta_3))$.
%Since
%
%we have $\lambda(T_F) = T_C\otimes H^1(D, \QQ(\zeta_3))_{\overline{\rho}_D}$.
%By taking conjugation, we have $\lambda(\overline{T}_F) = \overline{\lambda(T_F)} = \overline{T}%_C\otimes H^1(D, \QQ(\zeta_3))_{\rho_D}$.
This identification implies that $\dim(T_F)=\dim(T_C)=7$.
The spaces $T_F$ and $\overline{T}_F$ are two different $\mu_3$-characteristic subspaces of $(Q_F)_{\QQ(\zeta_3)}$. Since $\dim(Q_F)=14$, we conclude $(Q_F)_{\QQ(\zeta_3)}= T_F\oplus \overline{T}_F$. Then $Q_F$ is fixed-point free under the action of $\mu_3$, thus $(P_F)_{\QQ(\zeta_3)}\cong H^2(W_F,\QQ(\zeta_3))^{\mu_3}$.
\end{proof}

Let $\varphi$ be the topological intersection form on $T(C\times D)$, which is symmetric.
Let $h_{\varphi}(x,y)=\varphi(x,\overline{y})$ be the corresponding Hermitian form. 
Let $\nu$ and $\xi$ be the topological symplectic forms on $H^1(C, \CC)$ and $H^1(D, \CC)$ respectively.
Let $h_{\nu}(x,y)=\frac{\sqrt{-3}}{3}\nu(x,\overline{y})$ be the corresponding Hermitian form of $\nu$.
We have the following relation:
$$
\varphi(a\otimes b, c\otimes d) = -\nu(a,c)\xi(b,d).
$$
We can choose an element $\omega\in H^1(D, \CC)_{\overline{\rho}_D}$ such that $\xi(\omega,\overline{\omega})=\sqrt{-3}$.
Precisely, we can choose a basis $E, F$ of $H^1(D,\ZZ)$, such that $E^2=F^2=0$ and $\xi(E, F)=1$, and the action of $\zeta_6$ on $H^1(D,\ZZ)$ is given by $\zeta_6(E)=E-F$ and $\zeta_6(F)=E$.
We choose $\omega$ to be $E+\zeta_3F$.
We can directly verify that $\zeta_6\cdot\omega=\zeta_6\omega=\overline{\rho}_D(\zeta_6)\omega$ and $\xi(\omega,\overline{\omega})=\sqrt{-3}$.
Hence we have 
$$
h_{\varphi}(x\otimes \omega, y\otimes \omega) = h_{\nu}(x,y).
$$
We consider the two Hermitian spaces $(T_C,h_{\nu})$ and $(T_C\otimes H^1(D, \QQ(\zeta_3))_{\overline{\rho}_D},h_{\varphi})$.
Since $\omega\in H^1(D, \QQ(\zeta_3))$, we have the following morphisms (notice that $T_C, T_F$ are defined over $\QQ(\zeta_3)$)
\begin{equation*}
	T_C\to T_C\otimes H^1(D, \QQ(\zeta_3))_{\overline{\rho}_D}\to T_F,\quad v\mapsto v\otimes \omega\mapsto \lambda^{-1}(v\otimes \omega),
\end{equation*}
with the composition respects the Hermitian forms $h_{\nu}$ on $T_C$ and $h_{\varphi}$ on $T_F$. Therefore, we have: 
\begin{prop}
\label{proposition: identification Hermitian ball}
There is a natural isomorphism between Hermitian spaces $(T_C,h_{\nu})$ and $(T_F,h_{\varphi})$. 
In particular, there is a natural isomorphism between the two complex balls $\BB(T_C)$ and $\BB(T_F)$.
\end{prop}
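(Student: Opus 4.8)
The plan is to exhibit the desired isometry explicitly as the composite
\[
\Phi\colon T_C\xrightarrow{\ v\mapsto v\otimes\omega\ }T_C\otimes H^1(D,\QQ(\zeta_3))_{\overline{\rho}_D}\xrightarrow{\ \lambda^{-1}\ }T_F ,
\]
where $\omega=E+\zeta_3F$ is the vector in $H^1(D,\QQ(\zeta_3))_{\overline{\rho}_D}$ fixed in the discussion above, and then to check two things: that $\Phi$ is a $\QQ(\zeta_3)$-linear isomorphism, and that it carries $h_\nu$ on $T_C$ to $h_\varphi$ on $T_F$. Granting this, the isomorphism of complex balls $\BB(T_C)\xrightarrow{\sim}\BB(T_F)$ follows at once, since $\BB(-)$ depends only on the underlying unitary Hermitian space and any isometry carries the positivity locus biholomorphically onto the positivity locus.

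For the first point, I would use that $D$ has genus one, so $H^1(D,\QQ(\zeta_3))$ is two-dimensional over $\QQ(\zeta_3)$ and the $\mu_6$-action decomposes it into its $\rho_D$- and $\overline{\rho}_D$-eigenlines. Since the basis $E,F$ of $H^1(D,\ZZ)$ is integral, $\omega=E+\zeta_3F$ genuinely lies in $H^1(D,\QQ(\zeta_3))$, and from $\zeta_6\cdot\omega=\overline{\rho}_D(\zeta_6)\,\omega$ it spans the line $H^1(D,\QQ(\zeta_3))_{\overline{\rho}_D}$; hence $v\mapsto v\otimes\omega$ is an isomorphism onto $T_C\otimes H^1(D,\QQ(\zeta_3))_{\overline{\rho}_D}$. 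By Proposition \ref{proposition: characterization of T_F}, $\lambda$ restricts to an isomorphism $T_F\to T_C\otimes H^1(D,\QQ(\zeta_3))_{\overline{\rho}_D}$, so $\Phi=\lambda^{-1}\circ(-\otimes\omega)$ is an isomorphism. All of this is canonical once $\omega$ has been chosen, which is the content of the word ``natural'' in the statement.

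For the Hermitian compatibility, I would plug into the Künneth formula for the intersection form recorded just above, $\varphi(a\otimes b,c\otimes d)=-\nu(a,c)\,\xi(b,d)$, to obtain
\[
h_\varphi(x\otimes\omega,\,y\otimes\omega)=\varphi(x\otimes\omega,\ \overline{y}\otimes\overline{\omega})=-\nu(x,\overline{y})\,\xi(\omega,\overline{\omega})
\]
for $x,y\in T_C$, and then insert the normalizations $\xi(\omega,\overline{\omega})=\sqrt{-3}$ and $h_\nu(x,y)=\tfrac{\sqrt{-3}}{3}\nu(x,\overline{y})$ to conclude $h_\varphi(\Phi(x),\Phi(y))=h_\nu(x,y)$. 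I expect the only genuinely delicate point of the whole argument to be the bookkeeping of normalization constants: one must make sure that the Hermitian form transported onto $T_F$ through $\lambda$ — which factors through the resolutions of $C\times_{l_d}D$ and the quotient by $\mu_6$ (with fixed points), via Shioda's identification of transcendental lattices \cite[Lemma~$3.1$]{shioda2008k3} — is exactly the $h_\varphi$ of the statement, with no stray factor of $6$ or the like, and that it meshes with the constant $\tfrac{\sqrt{-3}}{3}$. Everything else is formal, being a recombination of Propositions \ref{proposition: isomorphism between hodge structures} and \ref{proposition: characterization of T_F}; and in any event a spurious positive real scalar would leave the induced ball isomorphism, the second assertion, untouched.
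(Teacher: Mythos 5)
Your argument is essentially the paper's own: same map $\Phi=\lambda^{-1}\circ(\,\cdot\,\otimes\omega)$, same use of Proposition \ref{proposition: characterization of T_F} to see $\lambda$ restricts to an isomorphism onto $T_C\otimes H^1(D,\QQ(\zeta_3))_{\overline{\rho}_D}$, and same transfer of the Hermitian form via the K\"unneth sign rule and the value of $\xi(\omega,\overline{\omega})$. Your instinct to distrust the normalization constant is justified: a direct computation gives $\xi(\omega,\overline{\omega})=\xi(E+\zeta_3 F,\,E+\overline{\zeta}_3 F)=\overline{\zeta}_3-\zeta_3=-\sqrt{-3}$ rather than $\sqrt{-3}$, so what the computation actually yields is $h_\varphi(\Phi(x),\Phi(y))=\sqrt{-3}\,\nu(x,\overline{y})=3\,h_\nu(x,y)$ -- a positive rescaling, not the strict equality asserted -- but (as you observe) a positive real scalar does not affect the induced ball isomorphism, and the sign of $\xi(\omega,\overline{\omega})$ is in fact forced by the requirement that both Hermitian forms have signature $(1,6)$.
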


\section{Main Theorem}
\label{section: commu}
In this section we first study a morphism between two local systems. Then we formulate and prove our main result Theorem \ref{theorem: main}.

\subsection{Morphism between Two Local Systems}
We first state a general result that will be used.
For an algebraic group $G$ defined over a base field $k$, $G$ is called special if every principal $G$-bundle is locally trivial in Zariski topology for every reduced algebraic variety $X$ defined over $k$.
This was defined by Serre in \cite[\S 4.1]{serre1958espaces}.
Grothendieck \cite[Theorem 3]{grothendieck1958torsion} proved that an algebraic group $G$ is special if and only if $G$ is affine, connected and torsion free.
In particular, Grothendieck showed that $\GL(n)$ and $\SL(n)$ are both special.

Recall that the algebraic group $G_S$ defined in \S\ref{subseciton: GIT moduli of type I} is isomorphic to $\GL(2)$.
By Grothendieck's theorem every principal $G_S$-bundle is Zariski locally trivial.
From the Luna slice theorem (see \cite{luna1973slices}, \cite[Proposition $5.7$]{drezet2004luna}), there exists a Zariski-open subspace $U_S\subset \calF_S$ such that $\PP\calV^\circ \to \calF_S$ admits a section over $U_S$. We denote by $s\colon U_S\to \PP\calV^{\circ}$ such a section. This defines a family of K3 surfaces $\pi_{K3}\colon \calU_{K3}\to U_S$ with $\pi_{K3}^{-1}(p)=W_{s(p)}$ for each $p\in U_S$.

Recall $p_{\calF}\colon \calF_{DM}\to \calF_S$ is an isomorphism between the two GIT moduli spaces, see \S\ref{subseciton: moduli of ordered points}.
Recall from \S \ref{subsection: Review of Deligne-Mostow's Theory} we have the GIT quotient $\SF_{DM} = \SL(2)\dbs((P^9)^\circ/\FS_{\alpha})$.
By a similar argument, there is a Zariski open subspace $U_{DM}\subset \SF_{DM}$ such that $(P^9)^\circ/\FS_{\alpha}\to \SF_{DM}$ has a section $t$ over $U_{DM}$.
Without loss of generality, we assume that $U_S$ is suitably chosen such that $U_{DM}\coloneqq p_{\calF}^{-1}(U_S)$ admits a section.
We have a natural family $\pi_Q\colon \calU_Q\to U_{DM}$ (the index $Q$ stands for quotient-product surface) such that the fiber over $m\in U_{DM}$ is $\pi_Q^{-1}(m)=C_m\times_{l_d}D$. Denote by $\pi'_{K3}\colon p_{\calF}^*\calU_{K3}\to U_{DM}$ the natural families of $K3$ surfaces on $U_{DM}$.
We have the following diagram 
\begin{equation*}
\begin{tikzcd}
\calU_{K3} \arrow{d}{\pi_{K3}} & p_{\calF}^* \calU_{K3} \arrow{d}[swap]{\pi_{K3}'} & \calU_Q \arrow{dl}{\pi_Q} \\
U_S & U_{DM} \arrow{l}{p_{\calF}} & 
\end{tikzcd}
\end{equation*}
where, from Proposition \ref{proposition: birational}, the spaces $p_{\calF}^*\calU_{K3}$ and $\calU_Q$ are birational.

We can construct a global resolution $\widetilde{\calU}_Q$ of $\calU_Q$ fiberwisely, which is also a locally trivial family over $U_{DM}$. The construction is as follows. 
Recall that $\widetilde{C \times_{l_d} D}$ is the minimal resolution of $C \times_{l_d} D$, and there exists a natural birational morphism $\widetilde{C\times_{l_d}D} \to W_F$.
Let $\widetilde{\pi}_Q\colon \widetilde{\calU}_Q\to U_{DM}$ be the family of $\widetilde{C \times_{l_d} D}$. 
It admits a fiberwise birational morphism to $p_{\calF}^*\calU_{K3}$. 
Then we have two induced morphisms of local systems, namely $R^2\pi'_{K3*}\QQ(\zeta_3)\to R^2\widetilde{\pi}_{Q*}\QQ(\zeta_3)$ and $R^2\pi_{Q*}\QQ(\zeta_3)\to R^2\widetilde{\pi}_{Q*}\QQ(\zeta_3)$.

Recall that $\calT_{DM}$ is a local system on $(P^9)^\circ/\FS_{\alpha}$ which is defined in \S\ref{subsection: Review of Deligne-Mostow's Theory}.
Note that $\calT_{DM}$ restricts to a local system over $U_{DM}$ via the section $t$ with fiber a unitary Hermitian form defining the Deligne-Mostow ball (see the paragraph before Theorem \ref{theorem: deligne-mostow}).  
Over $U_{DM}$, the local system $\calT_{DM}$ can be naturally identified with a sub-local system of $R^2\pi_{Q*}\QQ(\zeta_3)$ via the following inclusion of fibers:
\begin{equation*}
(\calT_{DM})_m=T_{C_m} \cong T_{C_m}\otimes H^1(D, \QQ(\zeta_3))_{\overline{\rho}_D} \hookrightarrow H^2(C_m\times D, \QQ(\zeta_3))=(R^2\pi_{Q*}\QQ(\zeta_3))_m
\end{equation*}
We still denote by $\calT_{DM}$ its restriction to $U_{DM}\subset \calF_{DM}$. The $\mu_6$-characteristic subspaces $T_F\subset H^2(W_F,\QQ(\zeta_3))$ form a sub-local system $\calT_S\subset R^2\pi_{K3*} \QQ(\zeta_3)$ over $U_S$. We have: %We also write $T^{2,0}_F$ for $H^{2,0}(W_F)$. 
\begin{prop}
\label{proposition: morphism between local systems}
There exists an isomorphism $\theta\colon \calT_{DM}\cong p_{\calF}^*\calT_S$, such that for $[m]\in U_{DM}$, $\theta_{m}\colon T_{C_m}\to T_{F_m}$ is an isomorphism of Hermitian spaces.
\end{prop}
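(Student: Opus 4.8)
Most of the setup is already in place in the discussion above: over $U_{DM}$ we have $p_{\calF}^*\calT_S$ as a sub-local system of $R^2\pi'_{K3*}\QQ(\zeta_3)$, the local system $\calT_{DM}$ realised as a sub-local system of $R^2\pi_{Q*}\QQ(\zeta_3)$ with fibre $T_{C_m}\otimes H^1(D,\QQ(\zeta_3))_{\overline{\rho}_D}\cong T_{C_m}$, and the two morphisms of local systems $R^2\pi'_{K3*}\QQ(\zeta_3)\to R^2\widetilde{\pi}_{Q*}\QQ(\zeta_3)$ and $R^2\pi_{Q*}\QQ(\zeta_3)\to R^2\widetilde{\pi}_{Q*}\QQ(\zeta_3)$ coming from the fibrewise birational morphisms $\widetilde{C_m\times_{l_d}D}\to W_{F_m}$ and $\widetilde{C_m\times_{l_d}D}\to C_m\times_{l_d}D$. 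The plan is to show that, after restriction, these two morphisms realise $p_{\calF}^*\calT_S$ and $\calT_{DM}$ as one and the same sub-local system of $R^2\widetilde{\pi}_{Q*}\QQ(\zeta_3)$, and then to identify the resulting isomorphism with the fibrewise one of \S\ref{section: hodge structure}.

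First I would check that the two restricted morphisms are injective on every fibre. Over $m$, the fibre of $p_{\calF}^*\calT_S$ is $T_{F_m}\subset H^2(W_{F_m},\QQ(\zeta_3))$, and pullback along the birational morphism $\widetilde{C_m\times_{l_d}D}\to W_{F_m}$ of smooth projective surfaces is injective, with cokernel spanned by exceptional classes and orthogonal to the image; similarly the fibre of $\calT_{DM}$ lies in $H^2(C_m\times D,\QQ(\zeta_3))^{l_d}=H^2(C_m\times_{l_d}D,\QQ(\zeta_3))$, and pullback along the resolution $\widetilde{C_m\times_{l_d}D}\to C_m\times_{l_d}D$ of the cyclic quotient singularities is again injective with exceptional cokernel. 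Next I would show the two images agree. Both are sub-local systems of $R^2\widetilde{\pi}_{Q*}\QQ(\zeta_3)$, so it suffices to compare their fibres at a single generic $m$: there $Q_{F_m}=T(W_{F_m})$ by Corollary \ref{proposition: generic Pic and Tr}, and Propositions \ref{proposition: isomorphism between hodge structures} and \ref{proposition: characterization of T_F} identify the image of $T_{F_m}$ inside $H^2(\widetilde{C_m\times_{l_d}D},\QQ(\zeta_3))$ with $T_{C_m}\otimes H^1(D,\QQ(\zeta_3))_{\overline{\rho}_D}$, which is precisely the image of the fibre of $\calT_{DM}$. Hence the two images coincide over all of $U_{DM}$, and composing the isomorphism of $\calT_{DM}$ onto this common sub-local system with the inverse of the isomorphism of $p_{\calF}^*\calT_S$ onto it yields $\theta\colon\calT_{DM}\cong p_{\calF}^*\calT_S$.

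It remains to check the fibrewise Hermitian statement. By construction, $\theta_m$ is, up to tensoring with the flat class $\omega\in H^1(D,\QQ(\zeta_3))_{\overline{\rho}_D}$, the fibrewise map $T_{C_m}\to T_{F_m}$ underlying Propositions \ref{proposition: isomorphism between hodge structures} and \ref{proposition: characterization of T_F}; that this map is an isometry for the Hermitian forms $h_{\nu}$ on $T_{C_m}$ and $h_{\varphi}$ on $T_{F_m}$ is exactly the content of Proposition \ref{proposition: identification Hermitian ball} (here one uses that the curve $D$ and the class $\omega$ do not vary with $m$, so that $\theta$ is genuinely a morphism of local systems and not merely a fibrewise family of isomorphisms). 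I expect the main obstacle to be the step of matching the two images over the whole of $U_{DM}$: one must control how the exceptional and other algebraic classes created in passing to $\widetilde{C_m\times_{l_d}D}$ interact simultaneously with $Q_{F_m}$ on the K3 side and with the K\"unneth factors of $C_m\times D$ on the product side, rather than only at a generic point, where the comparison is automatic because there the transcendental lattices already exhaust the relevant spaces.
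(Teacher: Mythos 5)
Your proposal is correct and follows essentially the same route as the paper: compare $p_{\calF}^*\calT_S$ and $\calT_{DM}$ inside $R^2\widetilde{\pi}_{Q*}\QQ(\zeta_3)$, show the images coincide using Propositions \ref{proposition: isomorphism between hodge structures} and \ref{proposition: characterization of T_F}, and then invoke Proposition \ref{proposition: identification Hermitian ball} for the Hermitian statement. The only minor deviation is that you check equality of the images at a single generic fibre and extend by rigidity of sub-local systems, whereas the paper cites Proposition \ref{proposition: characterization of T_F} directly (which already holds for every $F\in\calV^\circ$); you also usefully flag that the constancy of $D$ and $\omega$ across the family is what makes $\theta$ a genuine morphism of local systems rather than a fibrewise family of isomorphisms, a point the paper leaves implicit.
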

\begin{proof}
By Proposition \ref{proposition: characterization of T_F}, the images of $\calT_{S} \subset R^2\pi_{K3*} \QQ(\zeta_3)$ and $\calT_{DM}\subset R^2\pi_{Q*}\QQ(\zeta_3)$ in $R^2\widetilde{\pi}_{Q*}\QQ(\zeta_3)$ are the same. Then by Proposition \ref{proposition: identification Hermitian ball}, the morphism $\theta_m$ preserves the Hermitian forms.
\end{proof}

Recall that we have fixed the base point $o\in (P^N)^\circ/\FS_{\alpha}$ in \S \ref{subsection: Review of Deligne-Mostow's Theory}. 
From now on we assume that $[o]\in U_{DM}$ which gives a sextic $[F_o]\in U_S\subset \calF_S$.
%From now on we regard $[F_o]$ as the base point of $U_S$.
Thus we have the canonical isomorphism between $\BB_{DM}$ (see \S \ref{subsection: Review of Deligne-Mostow's Theory}) and $\BB_S$ (see \S \ref{subsection: period map Prd_W}), denoted by $\theta_{o}\colon \BB_{DM}\cong \BB_S$.
%We define an arithmetic group 
%\begin{equation*}\label{definition of arithmetic group of sextics}
%	\Gamma_S\coloneqq \{g\in O(T(W_{F_o})) \big{|} g\circ\zeta_6 = \zeta_6\circ g \}.
%\end{equation*}
The following is a direct corollary of Proposition \ref{proposition: morphism between local systems}.
\begin{cor}\label{corollary: comm}
	For $m\in (P^9)^{\circ}$, take a path $\gamma$ from $[o]$ to $[m]$ in $U_{DM}$, and it maps to a path $\gamma'$ in $U_S$, which is from $[F_m]$ to $[F_o]$.
	The path $\gamma$ (resp. $\gamma'$) defines an isomorphism $\gamma^*\colon T_{C_m}\to T_{C_o}$ (resp. $\gamma'^*\colon T_{F_m}\to T_{F_o}$).
	This induces a commutative diagram:
	\begin{equation}\label{diagram: comm}
	\begin{tikzcd}
	T_{C_m} \arrow{r}{\gamma^*} \arrow{d}{\theta_m} & T_{C_o}\arrow{d}{\theta_o} \\
	T_{F_m} \arrow{r}{\gamma'^*} & T_{F_o}.
	\end{tikzcd}
	\end{equation}
\end{cor}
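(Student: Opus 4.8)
The plan is to read off the claim from the fact, established in Proposition \ref{proposition: morphism between local systems}, that $\theta$ is an isomorphism of \emph{local systems} $\calT_{DM}\cong p_{\calF}^{*}\calT_{S}$ over $U_{DM}$, and not merely a collection of fibrewise Hermitian isometries. The general principle I would invoke is that any morphism $\phi\colon \calL\to\calL'$ of local systems on a topological space is compatible with parallel transport: for a path $\gamma$ from $a$ to $b$ one has $\phi_{b}\circ P^{\calL}_{\gamma}=P^{\calL'}_{\gamma}\circ\phi_{a}$, where $P^{\calL}_{\gamma}\colon\calL_{a}\to\calL_{b}$ denotes transport along $\gamma$. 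This is immediate because $\phi$ carries flat (horizontal) sections of $\calL$ along $\gamma$ to flat sections of $\calL'$.

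Next I would match the maps in Diagram \eqref{diagram: comm} with transport maps of these two local systems. Since $p_{\calF}$ is an isomorphism of varieties (hence a homeomorphism) by \S\ref{subseciton: moduli of ordered points}, a path $\gamma$ in $U_{DM}$ joining $[o]$ to $[m]$ determines the path $\gamma'$ in $U_{S}$, and the stalkwise transport of $p_{\calF}^{*}\calT_{S}$ along $\gamma$ is identified with that of $\calT_{S}$ along $\gamma'$. By construction (see the definitions of the period maps in \S\ref{subsection: Review of Deligne-Mostow's Theory} and \S\ref{subsection: period map Prd_W}), the isomorphism $\gamma^{*}\colon T_{C_{m}}\to T_{C_{o}}$ is exactly the parallel transport in $\calT_{DM}$ from the fibre over $[m]$ to the fibre over $[o]$, and $\gamma'^{*}\colon T_{F_{m}}\to T_{F_{o}}$ is the parallel transport in $\calT_{S}$ from the fibre over $[F_{m}]$ to the fibre over $[F_{o}]$; the vertical arrows $\theta_{m},\theta_{o}$ are the stalks of $\theta$ at $[m]$ and $[o]$, with $\theta_{o}$ being by definition the fixed identification $\BB_{DM}\cong\BB_{S}$.

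Then the commutativity of \eqref{diagram: comm} is precisely the displayed identity above applied to $\phi=\theta$, $\calL=\calT_{DM}$, $\calL'=p_{\calF}^{*}\calT_{S}$, $a=[m]$, $b=[o]$. There is essentially no obstacle; the only points needing care are the bookkeeping of the orientations of $\gamma$ and $\gamma'$ (reflecting that $m\mapsto F_{m}$ is contravariant and that $\gamma^{*},\gamma'^{*}$ are "transport to the base point"), and the remark that $\theta_{o}$ as used here genuinely coincides with the chosen isomorphism $\BB_{DM}\cong\BB_{S}$, which is immediate from its definition as the stalk of $\theta$. One should also note the statement is used for $[m]\in U_{DM}$, where the local systems and $\theta$ are defined; if needed for all of $(P^{9})^{\circ}$, the general case follows since the construction of $\theta$ is topological and the relevant local systems extend.
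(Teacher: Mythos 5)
Your proposal is correct and matches the paper's implicit reasoning: the paper gives no separate proof, stating only that the result follows directly from Proposition \ref{proposition: morphism between local systems}, and the content that makes it ``direct'' is exactly the standard fact you invoke --- that a morphism of local systems intertwines parallel transport, since it carries flat sections to flat sections. Your careful identification of $\gamma^*,\gamma'^*$ with the transport maps of $\calT_{DM}$ and $p_{\calF}^*\calT_S$, and of $\theta_m,\theta_o$ with the stalk maps of $\theta$, is precisely the bookkeeping needed to see this.
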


\begin{prop}
\label{proposition: relation between two arithmetic subgroups}
We have
$$ \theta_o \Gamma_{DM} \theta_o^{-1}=\Gamma_S. $$
Therefore, we have an isomorphism $p_{\BB}\colon \Gamma_{DM}\bs\BB_{DM}\cong \Gamma_S\bs\BB_S$. 
\end{prop}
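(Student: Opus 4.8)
The plan is to identify $\theta_o\Gamma_{DM}\theta_o^{-1}$ with a monodromy group, prove the easy inclusion into $\Gamma_S$ by a lattice-theoretic argument, and then force equality by comparing the two period maps through a degree/dimension count. First I would reinterpret the left-hand side. The isomorphism $p_{\calF}\colon\calF_{DM}\to\calF_S$ identifies $\pi_1(U_{DM})$ with $\pi_1(U_S)$, and since $U_{DM}$ (resp.\ $U_S$) is Zariski-open in the GIT quotient, $\pi_1(U_{DM})$ surjects onto $\pi_1$ of the full Deligne--Mostow moduli space, so the monodromy of $\calT_{DM}$ over $U_{DM}$ is all of $\Gamma_{DM}$. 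By Proposition \ref{proposition: morphism between local systems} and Corollary \ref{corollary: comm}, $\theta$ carries the monodromy representation of $\calT_{DM}$ onto that of $\calT_S$; hence $\Gamma:=\theta_o\Gamma_{DM}\theta_o^{-1}$ is precisely the image of $\pi_1(U_S,[F_o])$ acting on $T_{F_o}$. Now the monodromy of $R^2\pi_{K3*}\QQ(\zeta_3)$ preserves the integral lattice $H^2(W_{F_o},\ZZ)$, its intersection form, and (being compatible with the fibrewise $\mu_3$-action of \eqref{action: fiberwise}) the eigenspace decomposition; in particular it preserves $Q_{F_o}$ and commutes with $\zeta_3$, so it restricts to $\{g\in O(Q_{F_o})\mid g\zeta_3=\zeta_3 g\}=\Gamma_S$ acting on $T_{F_o}$. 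Since by Proposition \ref{proposition: characterization of T_F} we have $(Q_{F_o})_{\QQ(\zeta_3)}=T_{F_o}\oplus\overline{T}_{F_o}$, an element of $\Gamma_S$ is determined by its restriction to $T_{F_o}$, and we may regard $\Gamma_S$ as a subgroup of the unitary group of $T_{F_o}$; this gives the inclusion $\Gamma\subseteq\Gamma_S$.

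For the reverse inclusion I would transport the Deligne--Mostow period map. By Theorem \ref{theorem: deligne-mostow} the group $\Gamma_{DM}$ is a lattice and $\Prd_{DM}$ is an open embedding, so $\theta_o$ descends to an isomorphism $\overline{\theta}_o\colon\Gamma_{DM}\bs\BB_{DM}\cong\Gamma\bs\BB_S$ and $\Prd_S':=\overline{\theta}_o\circ\Prd_{DM}\circ p_{\calF}^{-1}\colon\calF_S\hookrightarrow\Gamma\bs\BB_S$ is an open embedding. Because $\Gamma\subseteq\Gamma_S$ and $\Gamma_S$ is discrete and contains the lattice $\Gamma$, it is itself a lattice and $d:=[\Gamma_S:\Gamma]<\infty$; let $q\colon\Gamma\bs\BB_S\to\Gamma_S\bs\BB_S$ be the resulting finite holomorphic map. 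The key point is that $q\circ\Prd_S'=\Prd_S$. Indeed, by the construction of $\theta$ — Propositions \ref{proposition: isomorphism between hodge structures} and \ref{proposition: characterization of T_F} together with Proposition \ref{proposition: identification Hermitian ball} — the Hermitian isometry $\theta_m\colon T_{C_m}\to T_{F_m}$ carries the Hodge line defining the Deligne--Mostow period point of $C_m$ to $H^{2,0}(W_{F_m})$, and Corollary \ref{corollary: comm} says this identification is compatible with parallel transport; hence $\Prd_S'(F)$ and $\Prd_S(F)$ are represented by the same line of $\BB_S$, differing only in the group by which one quotients, which is exactly the statement $q\circ\Prd_S'=\Prd_S$.

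It remains to see $d=1$. Since $\Prd_S'$ is an open embedding, $\calF_S$ is irreducible of dimension $6$, and $\Gamma\bs\BB_S$ is irreducible of dimension $6$, the image $\Prd_S'(\calF_S)$ is a dense open subset of $\Gamma\bs\BB_S$; likewise $\Prd_S(\calF_S)$ is dense open in $\Gamma_S\bs\BB_S$. The map $q$ is étale over a dense open $V'\subseteq\Gamma_S\bs\BB_S$, and $q\circ\Prd_S'=\Prd_S$ is injective, so over each point of $V'':=V'\cap\Prd_S(\calF_S)$ at most one of the $d$ preimages lies in $\Prd_S'(\calF_S)$. If $d\ge 2$, then $q^{-1}(V'')\setminus\Prd_S'(\calF_S)$ surjects onto $V''$ and hence has dimension $6$; but it is contained in $\Gamma\bs\BB_S\setminus\Prd_S'(\calF_S)$, a proper closed analytic subset of dimension $\le 5$ — a contradiction. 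Therefore $d=1$, i.e.\ $\Gamma=\Gamma_S$. Finally $\theta_o$ conjugates $\Gamma_{DM}$ onto $\Gamma_S$, so it descends to the asserted isomorphism $p_{\BB}\colon\Gamma_{DM}\bs\BB_{DM}\cong\Gamma_S\bs\BB_S$.

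\textbf{Main obstacle.} The delicate part is not the group theory or the dimension count but the identity $q\circ\Prd_S'=\Prd_S$: one must check that the purely Hodge-theoretic identification of \S\ref{section: hodge structure} — built from the birational model $C\times_{l_d}D\dashrightarrow W_F$, the K\"unneth decomposition, and the Chevalley--Weil computation — is \emph{literally} the isometry underlying both period maps, including the correct matching of Hodge lines (Proposition \ref{proposition: isomorphism between hodge structures}) and its compatibility with monodromy (Corollary \ref{corollary: comm}). Once that compatibility is in hand, everything else is standard.
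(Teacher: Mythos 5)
Your proof is correct, but it takes a genuinely different route from the paper for both inclusions. The paper argues via orbits of a single period point $b_2=\theta_o(b_1)$: using Corollary \ref{corollary: comm} it shows $(\theta_o\Gamma_{DM}\theta_o^{-1})b_2\subset\Gamma_S b_2$, supposes some $\tau_1\in\theta_o\Gamma_{DM}\theta_o^{-1}\setminus\Gamma_S$, writes $\tau_1 b_2=\tau_2 b_2$ with $\tau_2\in\Gamma_S$, and kills $\tau_1^{-1}\tau_2\ne\id$ by choosing $b_2$ generically (so that it is not fixed by any nontrivial element in the countable set of products $\tau_1^{-1}\tau_2$); the reverse inclusion is then waved through ``by a similar argument.'' You instead prove the forward inclusion $\theta_o\Gamma_{DM}\theta_o^{-1}\subseteq\Gamma_S$ \emph{directly}, by recognizing $\theta_o\Gamma_{DM}\theta_o^{-1}$ as the monodromy of $\calT_S$ over $U_S$ and observing that monodromy preserves the topologically distinguished sublattice $P_{F_o}$ (hence $Q_{F_o}$) and commutes with the fibrewise $\zeta_3$; this is cleaner, since it makes explicit the lattice-preservation fact that the paper uses only implicitly through the well-definedness of $\Prd_S$. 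For the reverse you replace the paper's genericity argument with a degree count on the covering $q\colon\Gamma\bs\BB_S\to\Gamma_S\bs\BB_S$, using the key identity $q\circ\Prd_S'=\Prd_S$ (which you correctly trace to Propositions \ref{proposition: isomorphism between hodge structures}--\ref{proposition: identification Hermitian ball} and Corollary \ref{corollary: comm}) together with injectivity of $\Prd_S$ (Proposition \ref{proposition: open embedding}); this has the virtue of avoiding any appeal to genericity of a single period point, and of making visible exactly which earlier results the reverse inclusion really depends on. One point you should justify more carefully, which the paper does not need, is that $\Gamma\bs\BB_S\setminus\Prd_S'(\calF_S)$ is a \emph{proper Zariski-closed} subset (of dimension $\le 5$); this follows from algebraicity of the period map (Borel/Baily--Borel), Chevalley constructibility, and the fact that an open constructible set in an irreducible variety is Zariski-open, but it is not automatic for an arbitrary open dense subset.
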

\begin{proof}
Take $m\in (P^9)^{\circ}$ and $b_1\in \BB_{DM}$ such that $\Prd_{DM}([m])=[b_1]\in \Gamma_{DM}\bs \BB_{DM}$.  Let $b_2=\theta_o(b_1)$. 
For every point $b_1'\in \Gamma_{DM} b_1$, there exists a path $\gamma$ in $U_{DM}$ from $[o]$ to $[m]$, such that $\gamma^*(T^{1,0}_{C_m})=b_1'$. 
Let $\gamma'$ be the image of $\gamma$ in $U_S$.
From Diagram \eqref{diagram: comm}, we have $\gamma'^*(T^{2,0}_{F_m})=\theta_o(b_1')$, which must lie in the orbit $\Gamma_S b_2$. 
Therefore, we have $\theta_o(\Gamma_{DM} b_1)\subset \Gamma_S b_2$, which is equivalent to 
\begin{equation}
\label{equation: monodromy}
(\theta_o\Gamma_{DM}\theta_o^{-1})b_2\subset \Gamma_S b_2
\end{equation}

If there exists $\tau_1\in \theta_o\Gamma_{DM}\theta_o^{-1}\backslash \Gamma_S$. By \eqref{equation: monodromy}, we have $\tau_1 b_2=\tau_2 b_2$ for some $\tau_2\in \Gamma_S$. Then $\tau_1\ne \tau_2$ and $\tau_1^{-1}\tau_2$ ($\ne \id$) is in the isotropic group of $b_2$.
If we choose $b_1$ and $b_2$ generically, then the isotropic group of $b_2$ is trivial.
Then we obtain a contradiction. We conclude that $\theta_o \Gamma_{DM} \theta_o^{-1} \subset \Gamma_S$. 
By a similar argument we have $\theta_o^{-1} \Gamma_S \theta_o \subset \Gamma_{DM}$, hence $\theta_o^{-1} \Gamma_S \theta_o = \Gamma_{DM}$.
\end{proof}

\subsection{Main Result}
\label{theorem: main result}

\begin{thm}
\label{theorem: main}
	We have the following commutative diagram:
	\begin{equation}\label{main diagram}
	\begin{tikzcd}
	\calF_{DM} \arrow{d}{p_{\calF}} \arrow[hook]{r}{\Prd_{DM}}
	& \Gamma_{DM}\bs\BB_{DM} \arrow{d}{p_{\BB}} \\
	\calF_S \arrow[hook]{r}{\Prd_S}
	& \Gamma_S\bs\BB_S \\
	\end{tikzcd}
	\end{equation}
Here $\Prd_{DM}$ is defined in \S \ref{subsection: Review of Deligne-Mostow's Theory}, $p_{\calF}$ is defined in \S \ref{subseciton: moduli of ordered points} and $\Prd_S$ is defined in \S \ref{subsection: period map Prd_W}.
\end{thm}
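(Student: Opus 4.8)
The plan is to observe that, by the time we reach this statement, every arrow in Diagram \eqref{main diagram} is already under control: $\Prd_{DM}$ is an open embedding by Theorem \ref{theorem: deligne-mostow}, $p_{\calF}$ is an isomorphism by the discussion in \S\ref{subseciton: moduli of ordered points}, $\Prd_S$ is an open embedding by Corollary \ref{proposition: generic Pic and Tr}, and $p_{\BB}$ is an isomorphism by Proposition \ref{proposition: relation between two arithmetic subgroups}. Hence the only assertion that still needs an argument is the commutativity of the square, i.e. the identity $p_{\BB}\circ\Prd_{DM}=\Prd_S\circ p_{\calF}$ of maps $\calF_{DM}\to\Gamma_S\bs\BB_S$.

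First I would check this identity over the Zariski-dense open subset $U_{DM}\subset\calF_{DM}$ on which the sections $t$, $s$ and the families $\pi_Q$, $\pi'_{K3}$, $\widetilde{\pi}_Q$ have been constructed. Fix $[m]\in U_{DM}$ and a path $\gamma$ from $[o]$ to $[m]$ inside $U_{DM}$, with image path $\gamma'$ in $U_S$, and write $[F_m]=p_{\calF}([m])$. Unwinding the two period-map definitions, $\Prd_{DM}([m])$ is represented by the line $\gamma^*(T^{1,0}_{C_m})\subset T_{C_o}$ and $\Prd_S([F_m])$ by the line $\gamma'^*(T^{2,0}_{F_m})\subset T_{F_o}$. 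Since $p_{\BB}$ is, by construction, the map induced by the Hermitian isomorphism $\theta_o\colon\BB_{DM}\cong\BB_S$, it suffices to prove $\theta_o\bigl(\gamma^*(T^{1,0}_{C_m})\bigr)=\gamma'^*(T^{2,0}_{F_m})$ modulo $\Gamma_S$. Corollary \ref{corollary: comm} supplies the commutative square relating $\gamma^*$, $\gamma'^*$, $\theta_m$ and $\theta_o$, so the claim reduces to the fact that the fiberwise isomorphism $\theta_m\colon T_{C_m}\to T_{F_m}$ carries $T^{1,0}_{C_m}$ onto $T^{2,0}_{F_m}=H^{2,0}(W_{F_m})$. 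That last fact is read off the construction of $\theta$: by Propositions \ref{proposition: isomorphism between hodge structures} and \ref{proposition: characterization of T_F}, $\lambda$ identifies $T_{F_m}$ with $T_{C_m}\otimes H^1(D,\QQ(\zeta_3))_{\overline{\rho}_D}$ and sends $H^{2,0}(W_{F_m})$ to $T^{1,0}_{C_m}\otimes H^{1,0}(D)$, while $\theta_m$ is the map $v\mapsto\lambda^{-1}(v\otimes\omega)$ with $\omega$ a generator of the line $H^{1,0}(D)=H^1(D,\QQ(\zeta_3))_{\overline{\rho}_D}$; hence $\theta_m(T^{1,0}_{C_m})=\lambda^{-1}\bigl(T^{1,0}_{C_m}\otimes H^{1,0}(D)\bigr)=H^{2,0}(W_{F_m})$, as wanted.

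It then remains to propagate the identity from $U_{DM}$ to all of $\calF_{DM}$. Both $p_{\BB}\circ\Prd_{DM}$ and $\Prd_S\circ p_{\calF}$ are continuous (indeed holomorphic) maps from the irreducible variety $\calF_{DM}$ to the Hausdorff space $\Gamma_S\bs\BB_S$, and they agree on the dense open subset $U_{DM}$, hence they agree everywhere; this gives the commutativity of \eqref{main diagram}.

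As for the main obstacle: there is essentially no new difficulty at this stage, since all the substantive input has been established earlier — the explicit birational model $C\times_{l_d}D$ of $W_F$ in \S\ref{section: trivialization}, the Chevalley--Weil computation of the eigenspace dimensions in \S\ref{subsection: c-w formula}, the lattice-theoretic matching of Hodge structures (Propositions \ref{proposition: isomorphism between hodge structures}, \ref{proposition: characterization of T_F}, \ref{proposition: identification Hermitian ball}), the injectivity of $\Prd_S$ (Proposition \ref{proposition: open embedding}), and the conjugacy relation $\theta_o\Gamma_{DM}\theta_o^{-1}=\Gamma_S$ (Proposition \ref{proposition: relation between two arithmetic subgroups}). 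The one point that genuinely requires care is bookkeeping: one must verify that the Hermitian/Hodge isomorphism $\theta_m$ used to define $p_{\BB}$ is literally the same identification appearing in both period-map definitions, and that the base points $[o]\in U_{DM}$ and $[F_o]\in U_S$ are used consistently throughout. Once these identifications are pinned down, the commutativity is a formal consequence of Corollary \ref{corollary: comm}.
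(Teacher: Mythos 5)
Your argument follows the same route as the paper's proof: reduce to the dense open subset $U_{DM}$, compute both sides of the square by transporting the period lines along a path $\gamma$ and its image $\gamma'$, and conclude via Corollary \ref{corollary: comm}; the final density argument to extend from $U_{DM}$ to $\calF_{DM}$ is the same "it suffices" step the paper takes for granted. You also make explicit a point the paper leaves implicit — that $\theta_m$ carries $T^{1,0}_{C_m}$ onto $H^{2,0}(W_{F_m})$, read off from Propositions \ref{proposition: isomorphism between hodge structures} and \ref{proposition: characterization of T_F} — which is precisely what makes the invocation of Corollary \ref{corollary: comm} conclusive.
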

\begin{proof}

It suffices to show the commutativity for the open dense subspace $U_{DM}\subset \calF_{DM}$. 
Take a point $m\in (P^9)^{\circ}$ such that $[m]\in U_{DM}$. 
	We next show $(p_{\BB}\circ\Prd_{DM})([m])=(\Prd_S\circ p_{\calF})([m])$.
	We have $p_{\calF}([m])=[F_m]$. 
	Take a path $\gamma$ in $U_{DM}$ from $[o]$ to $[m]$. This path induces an isomorphism $\gamma^*\colon T_{C_m}\cong T_{C_o}$. 
	This isomorphism identifies $T^{1,0}_{C_m}$ with a positive line in $T_{C_o}$, which is the point $\Prd_{DM}([m])$ in $\BB_{DM}$.
	
	The path $\gamma$ induces a path $\gamma'$ in $U_S$ from $[F_m]$ to $[F_o]$. 
	This induces an isomorphism $H^2(W_{F_m})\cong H^2(W_{F_o})$, hence also $T_{F_m}\cong T_{F_o}$. 
	The complex line $T_{F_m}^{2,0}$ is sent to a positive line in $T_{F_o}$, which is the point $\Prd_S([F_m])$.
	We only need to check $p_{\BB}(\Prd_{DM}([m]))=\Prd_S([F_m])$.
	This is implied by Corollary \ref{corollary: comm}.
\end{proof}

\section{An Explicit Description of $(Q_F, \mu_3)$}
\label{section: abstract description}
A sextic polynomial $F\in \calV^\circ$ has an associated lattice $Q_F\cong A_2\oplus E_6(-1)^2$ with an action of $\mu_3$ (see \S \ref{subsection: period map Prd_W}). 
In this section we give an explicit description of $(Q_F,\mu_3)$.

Let $\calE=\calO_{\QQ(\zeta_3)}$ be the Eisenstein ring of integers in $\QQ(\zeta_3)$. An Eisenstein lattice $\Lambda$ is a free $\calE$-module of finite rank together with a non-degenerate Hermitian form $h\colon \Lambda\times\Lambda\to \CC$. It is called unitary if its signature is $(1, *)$.
For an Eisenstein lattice $(\Lambda, h)$, we denote by $L(\Lambda)$ the associated real lattice with the underlying abelian group $\Lambda$ and the bilinear form given by $\frac{2}{3}\Ree(h)\colon \Lambda\times\Lambda\to \RR$. We have an action of $\mu_3$ on $L(\Lambda)$ such that the action of $\zeta_3\in\mu_3$ is by multiplication of $\zeta_3$ on $\Lambda$. This group action has no nonzero fixed vectors.
For the standard rank-one lattice $\calE$ (with $h\colon \calE\times \calE\to \calE$ sending $(x,y)$ to $x\overline{y}$), the associated real lattice $L(\calE)$ is isomorphic to $A_2(\frac{1}{3})$. Here for a real lattice $R$ and a real number $a\ne 0$, we denote by $R(a)$ the lattice with same underlying space with the value of the intersection form multiplied by $a$. 

By \cite[Chapter $4$, \S $8.3$, ($120$)]{conway1999sphere}, the lattice $E_6$ is associated with an Eisenstein lattice $\Lambda_1$ of rank $3$, which is generated by the row vectors of the following generator matrix
\begin{equation*}
	M=\left(                 
	\begin{array}{ccc}   
	\sqrt{-3} & 0 & 0 \\  
	0 & \sqrt{-3} & 0 \\  
	1 & 1 & 1 \\
	\end{array}
	\right)
\end{equation*}
in $\CC^3$, and the Hermitian form is the restriction of the standard one on $\CC^3$.
The intersection matrix for $\Lambda_1$ is
\begin{equation*}
	M\overline{M}^{tr}=\left(                 
	\begin{array}{ccc}   
	3 & 0 & \sqrt{-3} \\  
	0 & 3 & \sqrt{-3} \\  
	-\sqrt{-3} & -\sqrt{-3} & 3 \\
	\end{array}
	\right).
\end{equation*}

Let $T=L(\calE(-3)\oplus \Lambda_1^2)$ be the associated real lattice with the Eisenstein lattice
\begin{equation*}
\calE(-3)\oplus \Lambda_1^2= \left( -3 \right)
    \oplus
	\left(                 
	\begin{array}{ccc}   
	3 & 0 & \sqrt{-3} \\  
	0 & 3 & \sqrt{-3} \\  
	-\sqrt{-3} & -\sqrt{-3} & 3 \\
	\end{array}
	\right)
	\oplus
	\left(                 
	\begin{array}{ccc}   
	3 & 0 & \sqrt{-3} \\  
	0 & 3 & \sqrt{-3} \\  
	-\sqrt{-3} & -\sqrt{-3} & 3 \\
	\end{array}
	\right),
\end{equation*}
which is actually integral and isomorphic to $A_2\oplus E_6(-1)^2$.
Let $L_{K3} = U^3\oplus E_8(-1)^2$ be the $K3$ lattice. 
The primitive embeddings of $A_2\oplus E_6(-1)^2$ into $L_{K3}$ are unique up to automorphisms of $L_{K3}$.
We fix one such embedding.
The standard action of $\mu_3$ on $T$ does not have non-zero fixed vectors and acts trivially on $A_T=T^*/T$.
Hence it can be extended to an action of $\mu_3$ on $L_{K3}$ with trivial restriction to $P=T^{\perp}_{L_{K3}}$.

Let $\chi\colon \mu_3\hookrightarrow \CC^{\times}$ be the tautological character.
%Recall $\rho_k\colon \mu_3\to \CC^{\times}$ is the character sending $\zeta_3$ to $\zeta_3^k$. Denote $\chi=\rho_1$. 
We have a decomposition $T_{\QQ(\zeta_3)}=T_{\chi}\oplus T_{\overline{\chi}}$, where $T_{\chi}$ and $T_{\overline{\chi}}$ represent for the characteristic subspaces associated with $\chi$ and $\overline{\chi}$ respectively.
Denote by $\varphi$ the bilinear form on $T$, which naturally extends to a $\QQ(\zeta_3)$-bilinear form on $T_{\QQ(\zeta_3)}$.
We define
\begin{equation*}
h(x,y)=\varphi(x, \overline{y})
\end{equation*}
for any $x,y\in T_{\chi}$. Then $h$ is a $\QQ(\zeta_3)$-Hermitian form on $T_{\chi}$ of signature $(1,6)$.
Let $\BB(T_{\chi})$ be the complex hyperbolic ball associated with $(T_{\chi}, h)$. 

In this section we show that a generic point in $\BB(T_{\chi})$ recovers a $K3$ surface $W_F$ in a natural way, and conclude Proposition \ref{proposition: abstract model of (T_F, beta_F)}. 
\begin{prop}
\label{proposition: abstract model of (T_F, beta_F)}
For the $K3$ surface $W_F$, there exists an isomorphism $\iota\colon Q_F\stackrel{\cong}{\longrightarrow}T$ compatible with the actions of $\mu_3$ on both sides. 
In particular, $\iota(H^{2,0}(W_F))\in \BB(T_{\chi})$. 
\end{prop}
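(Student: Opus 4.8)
The plan is to produce the isometry $\iota\colon Q_F\to T$ by a lattice-theoretic uniqueness argument. Once $\iota$ is $\mu_3$-equivariant, the assertion $\iota(H^{2,0}(W_F))\in\BB(T_\chi)$ follows since $H^{2,0}(W_F)$ is a positive complex line inside the characteristic subspace $T_F\subset(Q_F)_{\QQ(\zeta_3)}$, and an equivariant $\iota$ sends $T_F$ to a characteristic subspace of $T$; the normalisation of $\chi$ in \S\ref{section: abstract description} is exactly what makes this subspace $T_\chi$. Moreover it is enough to treat a single very general $F\in\calV^\circ$: the fibration $\pi_F$ and its Kodaira types are locally constant over the connected variety $\calV^\circ$, so $P_F$ (spanned by the section, a fibre, and the non-identity components of the type \textup{IV} fibres) and $Q_F=P_F^{\perp}$ form a local system, while the $\mu_3$-action is induced by the fixed $\mu_3\subset\GL(V)$; hence the pair $(Q_F,\mu_3)$ has a single isomorphism class over $\calV^\circ$.

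\textbf{$Q_F$ as a Hermitian Eisenstein lattice.} By Proposition \ref{proposition: shioda} the $\ZZ$-lattice underlying $Q_F$ is $A_2\oplus E_6(-1)^2$, the same as the one underlying $T=L(\calE(-3)\oplus\Lambda_1^2)$. By Proposition \ref{proposition: characterization of T_F} one has $(Q_F)_{\QQ(\zeta_3)}=T_F\oplus\overline{T}_F$, so the $\mu_3$-action on $Q_F$ has no nonzero fixed vector; therefore $Q_F$ is the real lattice $L(\Lambda_F)$ of a Hermitian $\calE$-lattice $\Lambda_F$ (with $\zeta_3$ acting as multiplication by $\zeta_3$), and since $h_\epsilon$ has signature $(1,6)$ on $T_F$ (Corollary \ref{proposition: generic Pic and Tr}), $\Lambda_F$ is unitary of signature $(1,6)$ — the same $\calE$-rank ($=7$) and signature as $\calE(-3)\oplus\Lambda_1^2$. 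It remains to compare discriminant data. Here I use that $P_F=H^2(W_F,\ZZ)^{\mu_3}$ (from \eqref{equation: PF}, primitivity, and the last part of Proposition \ref{proposition: characterization of T_F}): since $\mu_3$ fixes $P_F$ pointwise and the glue subgroup $H^2(W_F,\ZZ)/(P_F\oplus Q_F)\subset A_{P_F}\oplus A_{Q_F}$ is $\mu_3$-stable, being the graph of an anti-isometry $A_{P_F}\to A_{Q_F}$ (the lattice $H^2(W_F,\ZZ)$ is unimodular), the $\mu_3$-action on $A_{Q_F}\cong(\ZZ/3)^3$ agrees with the one on $A_{P_F}$, namely is trivial. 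As $A_{Q_F}$ is annihilated by $\sqrt{-3}$, the Eisenstein discriminant datum of $\Lambda_F$ reduces to the discriminant form of $A_2\oplus E_6(-1)^2$, which is also that of $\calE(-3)\oplus\Lambda_1^2$.

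\textbf{Conclusion.} I then invoke the Hermitian analogue of Nikulin's uniqueness theorem \cite{nikulin1979integer}: a unitary Hermitian $\calE$-lattice of signature $(1,6)$ is determined up to isometry by its discriminant datum, the $\calE$-rank ($7$) being comfortably larger than the length of the datum ($3$). This produces an $\calE$-linear isometry $\iota\colon Q_F\to\calE(-3)\oplus\Lambda_1^2=T$; being $\calE$-linear it intertwines multiplication by $\zeta_3$, hence is $\mu_3$-equivariant, and by the first paragraph $\iota(H^{2,0}(W_F))\in\BB(T_\chi)$. By the reduction this proves the proposition for all $F\in\calV^\circ$.

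\textbf{Where the difficulty lies.} The one substantive ingredient is the Hermitian form of Nikulin's theorem; everything else — the reduction, the passage to an Eisenstein lattice, the discriminant computation — is routine. An alternative, more geometric route would realise a very general point of $\BB(T_\chi)$ (with the $\mu_3$-action on $L_{K3}$ extended trivially on $P=T^{\perp}$) as the period of a $K3$ surface $X$ with $\Pic(X)\cong U\oplus A_2(-1)^3$ and a non-symplectic order-$3$ automorphism, then identify $X$ with some $W_F$ via Remark \ref{remark: weierstrass fibration} after checking the elliptic structure, the fibrewiseness of the automorphism and the Kodaira types, and finally transport the lattice data; but that argument must rule out competing fibration types (e.g.\ by a Shioda--Tate count, or by the classification of non-symplectic order-$3$ automorphisms of $K3$ surfaces) and track the marking carefully, and is correspondingly longer, which is why I would favour the lattice-theoretic proof.
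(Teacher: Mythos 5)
Your proposal takes a genuinely different route from the paper. The paper argues geometrically: it starts from a generic period point $z\in\BB(T_\chi)$, invokes the global Torelli theorem to produce a marked $K3$ surface $X$ with a non-symplectic $\mu_3$-action and $\Pic(X)\cong U\oplus A_2(-1)^3$, shows that $X$ carries an elliptic fibration on which $\mu_3$ acts fiberwise, constrains the singular fibers to be $3\times\mathrm{IV}+6\times\mathrm{II}$ using the Euler number and the structure of $\langle e,f\rangle^{\perp}_{\Pic(X)}$, and then identifies $X\cong W_F$ via Remark~\ref{remark: weierstrass fibration}; the desired isomorphism $(Q_F,\mu_3)\cong(T,\mu_3)$ falls out as a corollary. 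You, by contrast, attempt a purely lattice-theoretic argument.

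The lattice-theoretic preliminaries are fine: $Q_F$ is $\mu_3$-fixed-point-free by Proposition~\ref{proposition: characterization of T_F}, hence comes from a Hermitian $\calE$-lattice $\Lambda_F$ of signature $(1,6)$; and your glue-group argument correctly shows that $\mu_3$ acts trivially on $A_{Q_F}\cong(\ZZ/3)^3$, so that $\sqrt{-3}$ kills the discriminant group. But the final, and decisive, step is a gap. You write \emph{``I then invoke the Hermitian analogue of Nikulin's uniqueness theorem \cite{nikulin1979integer}: a unitary Hermitian $\calE$-lattice of signature $(1,6)$ is determined up to isometry by its discriminant datum''}, yet \cite{nikulin1979integer} concerns $\ZZ$-quadratic lattices, not Hermitian $\calE$-lattices, and you cite no source proving this Hermitian statement. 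Genus-versus-class questions for Hermitian lattices over $\calO_{\QQ(\zeta_3)}$ do not trivially reduce to Nikulin's results; the ramified prime $3$ is exactly where your discriminant group lives, and the local Hermitian invariants there are not obviously controlled by the $\ZZ$-discriminant form plus signature. In fact, the paper's remark immediately after this proposition reads \emph{``We obtain geometrically the uniqueness of Eisenstein lattice with certain given discriminant form. We wonder whether there are more such results''} — that is, the authors regard the uniqueness statement you want to \emph{assume} as a \emph{consequence} of the geometric proof, precisely because no off-the-shelf Hermitian Nikulin theorem was available to them. Unless you supply a precise reference (or a proof) of the Hermitian uniqueness result you invoke, your argument is circular relative to the paper's own understanding of what is known, and the proposition remains unproved by your route.

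Your final paragraph sketches the paper's actual argument as an ``alternative'' and lists the points that need care (ruling out competing fibration types, the fiberwiseness of the $\mu_3$-action, the Kodaira-type count, and the marking); those are indeed exactly the points the paper addresses in its Steps 1--3. Had you carried that sketch through, your proof would match the paper's. As written, the lattice-theoretic route you favor is the one with the missing ingredient.
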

\begin{proof}
For $z\in \BB(T_{\chi})\subset \DD(L_{K3})$, by the global Torelli theorem there exists a $K3$ surface $X$ with a marking $\iota_X\colon H^2(X,\ZZ)\to L_{K3}$ and $\iota_X(H^{2,0}(X))=z$.
Recall that we have defined a $\mu_3$-action on $L_{K3}$ with the property that $L_{K3}^{\mu_3}=P\cong U\oplus A_2(-1)^3$. 
This gives rise to an automorphism $\iota_X^{-1}\circ \zeta_3\circ\iota_X$ on $H^2(X,\ZZ)$.
Note that $\iota_X^{-1}(L_{K3}^{\mu_3})=H^2(X,\ZZ)^{\mu_3}\subset \Pic(X)$. 
We take $z$ to be generic, then $\iota_X^{-1}(L_{K3}^{\mu_3})=\Pic(X)$.
We can find an element $v\in \Pic(X)_{\QQ}$ with $(v,v)>0$, such that there is no $(-2)$-vector in $\Pic(X)$ that is perpendicular to $v$. 
This implies that $v$ or $-v$ is ample. 
Since $\iota_X^{-1}(L_{K3}^{\mu_3})=\Pic(X)$, the $\mu_3$-action fixes $v$, hence also fixes an ample class. 
By global Torelli theorem, the $\mu_3$-action on $H^2(X,\ZZ)$ is uniquely induced by a $\mu_3$-action on $X$.
Next we prove $X\cong W_F$ for certain $F\in \calV^{\circ}$ in three steps.

\textbf{Step 1:} Find a nef class on $X$ that defines an elliptic fibration.

The argument here is inspired by \cite[Lemma $2.1$]{kondo1992automorphism}.
We have $L_{K3}^{\mu_3}=P \cong U\oplus A_2(-1)^3$. 
Take $e, f\in P$ with $e^2=f^2=0$ and $e\cdot f=1$.
By \cite[Chapter $8$, Remark 2.13]{huybrechts2016lectures}, we have an automorphism of $P$ that maps $(e,f)$ to $(e',f')$ such that $f'$ is nef.
We may ask $f$ to be nef.
%Actually $f'$ equals to $s_{[C_k]}\circ\cdots\circ s_{[C_1]}(f)$, where $C_i (1\le i\le k)$ is a $(-2)$-curve with $(s_{[C_{i-1}]}\circ\cdots\circ s_{[C_1]}(f),C)<0$, and $s_{[C_{k-1}]}\circ\cdots\circ s_{[C_1]}(f)$ is still not nef.\Yiming{Do we need to repeat the construction here?}
By \cite[Chapter $2$, Proposition $3.10$]{huybrechts2016lectures}, $f$ is base-point-free and a generic member $E\in |f|$ is a smooth curve of genus $1$.
Hence the linear system of $f$ gives rise to an elliptic fibration $\pi\colon X\to \PP^1$. 
We have $(e-f)^2=-2$ and $(e-f,f)=1$. 
By Riemann-Roch theorem, we have $h^0(e-f)-h^1(e-f)+h^0(f-e)=1$, hence $h^0(e-f)+h^0(f-e)\ge 1$. Since $(f-e, f)=-1$, we have $h^0(f-e)=0$ and $h^0(e-f)\ge 1$. 
Thus $e-f$ has an effective representative $D=\sum_{i=1}^{n}k_i D_i$, where $D_i$ is an irreducible component of $D$ and $k_i>0$ for each $i$.
Since $(e-f, f)=1$ and any irreducible curve on $X$ has non-negative intersection with $f$, we may assume $(D_1, f)=1$ and $(D_i, f)=0$ for all $i\ne 1$.
Thus $D_1$ is a section of $\pi$. Without loss of generality, we assume $e=[D_1]+f$.
%If we have $D_i^2>0$ for certain $i\ne 1$. Notice that $(D_i, f)=0$. Since $\Pic(X)$ has signature $(1,7)$, we must have $f^2<0$, which is a contradiction. It is also clear that $D_i^2\ne 0$, otherwise $\langle D_i, f\rangle$ is an isotropic subspace of rank two, which is not the case.
%Thus $D_i^2<0$ for $i\ne 1$.
%Hence every $D_i$ is a smooth rational curve.
%For $i\ne 1$, $D_i$ is a component of a singular fiber of $\pi$ which does not intersect with $D_1$.

\textbf{Step 2:} Prove that $\mu_3$ acts on $X$ fiberwisely.

One input for this step is a classification of fixed locus of non-symplectic automorphism of order $3$ on complex $K3$ surfaces. Such a classification is obtained by \cite{artebani2008non-symplectic} and \cite{taki2011classification} independently. Reidegld \cite{reidegld2015k3} made a good summary of the results.

Denote by $X^{\mu_3}$ the fixed locus of $\mu_3$.
By \cite[Theorem $2.3$]{reidegld2015k3}, $X^{\mu_3}$ is the disjoint union of three isolated points, a smooth rational curve $s$ and a smooth curve $b$ of genus two. 
Since $\mu_3$ fixes $f\in \Pic(X)$, we know that $\mu_3$ sends a fiber to a fiber. 
Since $b$ cannot be contained in a fiber, it intersects with all fibers. Therefore, $\mu_3$ preserves every fiber.

Recall that $D_1$ is a section of $\pi$. 
Since $\mu_3$ fixes $[D_1]\in \Pic(X)$, $D_1$ must be preserved by the $\mu_3$-action. 
Each fiber $E$ is preserved by the $\mu_3$-action. 
Thus $\mu_3$ fixes the intersection point $D_1\cap E$. 
We then conclude that $\mu_3$ fixes every point on $D_1$. 
In particular, we have $s=D_1$.

\textbf{Step 3:} Analyze the type of singular fibers and conclude the proposition. 

Since every fiber of $\pi$ admits an action of $\mu_3$, the $j$-function of $\pi$ is constantly $0$. 
On each smooth fiber, the $\mu_3$-action has $3$ fixed points, with one lying on the section $s$, and the other two lying on the genus $2$ curve $b$. 
In our case, the singular fibers of $\pi$ must have $j$-invariant $0$, hence must be of Kodaira type \uppercase\expandafter{\romannumeral2}, \uppercase\expandafter{\romannumeral4}, \uppercase\expandafter{\romannumeral2}$^*$, \uppercase\expandafter{\romannumeral4}$^*$, \uppercase\expandafter{\romannumeral1}$_0^*$.
A singular fiber of type \uppercase\expandafter{\romannumeral2}$^*$, \uppercase\expandafter{\romannumeral4}$^*$ or I$_0^*$ would contribute a copy of $E_8(-1)$, $E_6(-1)$ or $D_4(-1)$ as sublattice of $A_2(-1)^3=\langle e, f\rangle ^{\perp}_{\Pic(X)}$, which is impossible.
Thus a singular fiber of type \uppercase\expandafter{\romannumeral2}$^*$, \uppercase\expandafter{\romannumeral4}$^*$ or I$_0^*$ does not appear.

The Euler number of $X$ is $e(X)=24$.
By \cite[Theorem $6.10$]{shioda1972elliptic}, we have 
\begin{equation*}
	e(X)=\sum_{p\in \Sigma}e(\pi^{-1}(p)),
\end{equation*}
where $\Sigma\subset\PP^1$ is the discriminant set.
A fiber of type \uppercase\expandafter{\romannumeral4} contributes a copy of $A_2(-1)$ in $\langle e,f\rangle^\perp$ and has Euler characteristic $4$. A fiber of type II has no contribution to $\langle e,f\rangle^\perp$ and has Euler characteristic $2$.
Thus there are $3$ fibers of type IV and $6$ fibers of type II.
%Thus for a generic $z\in \BB(T_{\chi})$, the corresponding $K3$ surface $X$ is an elliptic $K3$ surface with $3$ singular fibers of type \uppercase\expandafter{\romannumeral4} and $6$ singular fibers of type \uppercase\expandafter{\romannumeral2}.
These $9$ singular fibers correspond to the $9$ points on the base $\PP^1$.
The $3$ ($6$, respectively) branch points define a homogeneous polynomial $F_3(X_1,X_2)$ ($F_6(X_1,X_2)$, respectively) of degree $3$ ($6$, respectively).
We then obtain a homogeneous polynomial $F=X_0^3 F_3+F_6$ of degree $6$.
It determines a $K3$ surface $W_F$ such that $(X, \mu_3)\cong (W_F, \mu_3)$, see Remark \ref{remark: weierstrass fibration}.
Hence $(T, \mu_3)$ is isomorphic to $(Q_F, \mu_3)$. Such an isomorphism then holds for every $F\in \calV^{\circ}$.
\end{proof}

\begin{rmk}
We obtain geometrically the uniqueness of Eisenstein lattice with certain given discriminant form. We wonder whether there are more such results. %Up to our knowledge, there is no parallel theory to Nikulin's theory of even lattices in the Eisenstein case.
\end{rmk}

\bibliography{ref}
\bibliographystyle{alpha}

\end{document}